\title{Morse predecomposition of an invariant set}
\algnewcommand{\Null}{\mathbf{null}}
\algnewcommand{\True}{\mathbf{true}}
\algnewcommand{\False}{\mathbf{false}}
\def\refeq#1{\if\workingver y(\ref{#1})-[[#1]]\else(\ref{#1})\fi}
\def\refth#1{\if\workingver y\ref{#1}-[[#1]]\else\ref{#1}\fi}
\def\mylabel#1{\if\workingver y\label{#1}{\bf\ \ [[#1]]\ \ }\else\label{#1}\fi}
\def\mybibitem#1{\if\workingver y\bibitem{#1}{\bf\ \ [[#1]]\ \
}\else\bibitem{#1}\fi}
\newfont{\msam}{msam10}
\newfont{\msbm}{msbm10}
\def\articletheorems{
\newtheorem{thm}{Theorem}[section]
\newtheorem{lem}[thm]{Lemma}

\newtheorem{defn}[thm]{Definition}
\newtheorem{cor}[thm]{Corollary}
\newtheorem{prop}[thm]{Proposition}
\newtheorem{ex}[thm]{Example}
\newtheorem{algo}{Algorithm}[section] 

}
\def\cA{\text{$\mathcal A$}}
\def\cB{\text{$\mathcal B$}}
\def\cE{\text{$\mathcal E$}}
\def\cM{\text{$\mathcal M$}}
\def\cN{\text{$\mathcal N$}}
\def\cP{\text{$\mathcal P$}}
\def\cQ{\text{$\mathcal Q$}}
\def\cS{\text{$\mathcal S$}}
\def\cT{\text{$\mathcal T$}}
\def\cU{\text{$\mathcal U$}}
\def\cV{\text{$\mathcal V$}}
\newcommand{\cl}{\operatorname{cl}}
\newcommand{\Int}{\operatorname{int}}
\newcommand{\inte}{\operatorname{int}}
\newcommand{\dom}{\operatorname{dom}}
\newcommand{\mo}{\operatorname{mo}}
\newcommand{\im}{\operatorname{im}}
\renewcommand{\emptyset}{\varnothing}
\newcommand{\Inv}{\operatorname{Inv}}
\def\proof{{\bf Proof:\ }}
\def\begeq#1{\begin{equation}\mylabel{#1}}
\def\endeq{\end{equation}}
\def\mathobj#1{\mbox{$#1$}}
\def\NN{\mathobj{\mathbb{N}}}
\def\PP{\mathobj{\mathbb{P}}}
\def\RR{\mathobj{\mathbb{R}}}
\def\WW{\mathobj{\mathbb{W}}}
\def\ZZ{\mathobj{\mathbb{Z}}}
\def\implies{\;\Rightarrow\;}
\def\setof#1{\mbox{$\{\,#1\,\}$}}
\def\0#1{\hbox{\kern25pt}$ #1 $\\}
\def\1#1{\hbox{\kern40pt}$ #1 $\\}
\def\2#1{\hbox{\kern55pt}$ #1 $\\}
\def\3#1{\hbox{\kern70pt}$ #1 $\\}
\newcounter{li}
\def\begalg#1{\begin{algo}\mylabel{#1}\normalshape:\small\baselineskip 10pt\\}
\def\endalg{\end{algo}}
\def\Figures(include=#1,cat=#2){
  \renewcommand{\textfraction}{.20}
  \renewcommand{\topfraction}{.80}
  \renewcommand{\bottomfraction}{.80}
  \renewcommand{\floatpagefraction}{.80}
  \newcount\figcount
  \figcount=0
  \let\includefigures=#1
  \def\figcat{#2}
}
\def\FigureFromFile[#1][#2](#3)#4
\def\FigureFromFileTwoD[#1][#2,#3](#4)#5
\def\FigureF<#1>[#2](#3)#4
\def\Figure[#1](#2)#3
\theoremstyle{definition}
\newcommand{\pto}{\nrightarrow}
\newcommand{\inv}{\operatorname{Inv}}
\newcommand{\inter}{\operatorname{int}}
\renewenvironment{proof}{{\bfseries Proof:\ }}{\vspace{10pt}}
\newcommand{\supsetV}[1]{\left[#1\right]_\cV}
\newcommand{\supconvsetV}[1]{\big\langle#1\big\rangle_\cV}
\newcommand{\esol}{\operatorname{eSol}}
\def\VSet{\mathbb{P}}
\def\VSubSet{{\mathbb{Q}}}
\def\VSetV{\mathbb{V}}
\def\VSetBis{\cQ}
\newcommand{\exend}{\hspace*{\fill}$\Diamond$}
\author{Micha\l{} Lipi\'nski}
\address{Intitute of Science and Technology Austria,\\
         Am Campus 1, 3400 Klosterneuburg, Austria}
\email{michal.lipinski@ist.ac.at}
\author{Konstantin Mischaikow}
\address{Department of Mathematics and BioMaPS Institute,\\
         Rutgers University,
         Piscataway, NJ 08854, USA}
\email{mischaik@math.rutgers.edu}
\author{Marian Mrozek}
\address{Division of Computational Mathematics,
         Faculty of Mathematics and Computer Science,
         Jagiellonian University,
         ul.~St. \L{}ojasiewicza 6, 30-348~Krak\'ow, Poland}
\email{marian.mrozek@uj.edu.pl}
\keywords{Morse decomposition, combinatorial dynamics, isolated invariant set, recurrene}
\subjclass[2010]{primary 37B30, secondary 37B20,37B35}
\begin{document}

%
\thanks{
	Research of M.M. is partially supported by the Polish National Science Center under Opus Grant No. 2019/35/B/ST1/00874.
	M.L. acknowledge support by the Dioscuri program initiated by the Max Planck Society, jointly managed with the National Science Centre (Poland), and mutually funded by the Polish Ministry of Science and Higher Education and the German Federal Ministry of Education and Research.
M.L. also acknowledge that this project has received funding from the European Union’s Horizon 2020 research and innovation programme under the Marie Skłodowska-Curie Grant Agreement No. 101034413.
	   The work of K.M. was partially supported by the National Science Foundation under awards DMS-1839294 and HDR TRIPODS award CCF-1934924, DARPA contract HR0011-16-2-0033, National Institutes of Health award R01 GM126555,  Air Force Office of Scientific Research under award numbers FA9550-23-1-0011, AWD00010853-MOD002 and MURI FA9550-23-1-0400.  K.M. was also supported by a grant from the Simons Foundation.
}
\date{Version compiled on \today}

\begin{abstract}
Motivated by the study of the recurrent orbits in a Morse set of a Morse decomposition,
we introduce the concept of Morse predecomposition of an isolated invariant set in the setting of combinatorial and classical dynamical systems. 
We prove that a Morse predecomposition indexed by a poset is a Morse decomposition
and we show how a Morse predecomposition may be condensed back to a Morse decomposition.

\end{abstract}

\maketitle

\section{Introduction}

A Morse decomposition of a flow on a compact metric space is a finite collection of disjoint, compact invariant sets, referred to as {\em Morse sets}, such that all the recurrent dynamics of the flow is contained in the union of the Morse sets and the Morse sets can be indexed by a partially ordered set (poset).
Morse decompositions provide a rigorous, computable approximation to C. Conley's fundamental result on the decomposition of a compact invariant set into its chain recurrent part and gradient-like part  \cite{Conley1978,Conley1988}.
Identification of nontriviality of Morse sets can be done via the Conley index \cite{Conley1978}, but more detailed understanding of the structure of the dynamics within a Morse sets requires additional information. 
With this in mind, in this paper we seek to extend the concept of a Morse decomposition to a Morse predecomposition that provides a more refined representation of the internal structure of Morse sets.
In essence, we give up the partial order condition and we weaken the concept of a connection between two isolated invariant sets.
We present the definition both in the combinatorial setting of multivector fields~\cite{Mr2017,LKMW2020} (see Definition~\ref{def:mvf-mpd}) and in the classical setting of flows (see Definition~\ref{defn:predecflow}).
We prove that a Morse predecomposition with a partial order as the preorder is always a Morse decomposition (see Corollary~\ref{cor:preMorse=Morse} and Theorem~\ref{thm:morse-decomposition}) and show how, in the combinatorial case, one can reconstruct a Morse decomposition from a given Morse predecomposition  (see Theorem~\ref{thm:mvf_consolidation}).

Our presentation is motivated by the fact that algorithms for constructing Morse decompositions are based on a combinatorial representation of dynamics in the form of a multivalued map acting on a collection of cells decomposing the phase space. 
The multivalued map sends a cell $Q$ into a collection of cells that captures the forward evolution of $Q$ under the  dynamics. 
The action of the multivalued map may be interpreted as a directed graph with the collection of cells as vertices.
The strongly connected components of this graph identify isolating blocks and the associated isolated invariant
sets form the constructed Morse decomposition.

In the context of discrete dynamics, i.e., dynamics generated by a continuous function $f\colon X\to X$, there is extensive work and broadly applicable software for identification of Morse decompositions \cite{AraiEtAl2009, BushAtAl2012, bush:cowan:harker:mischaikow, liz:pilarczyk} and analysis of the structure of dynamics within the Morse sets \cite{mischaikow:mrozek:95, szymczak, day:junge:mischaikow, day:frongillo, PilSigGra2023}.
Comparable tools are much less developed in the context of flows, e.g., dynamics generated by a differential equation \cite{WSLLK2023}.
For the purposes of this paper we focus on the theory of combinatorial multivector fields~\cite{Mr2017,LKMW2020,MrWa2023,DeLiMrSl2023} (an extension of the methods of combinatorial topological dynamics introduced by R.~Forman \cite{Fo98a,Fo98b}) for which the above mentioned graph theoretic algorithms produce Morse decompositions and for which identification of finer structure of dynamics within Morse sets is possible, e.g., the identification of periodic orbits and chaotic dynamics \cite{MWS2022}.
We expect that effective extension of these latter results will require  combinatorial decompositions of strongly connected components and a deeper understanding of the dynamic interpretations of these decompositions, which are subjects of this paper.  

To understand the idea of a Morse predecomposition on a simple example, consider the flow visualized in Figure~\ref{fig:flow_example_intro}(top)
and the isolated invariant set $S$ highlighted in pink.
We recall that a {\em Morse decomposition} of $S$ is an indexed family $\cM=\{M_p\mid p\in\PP\}$ of mutually disjoint isolated invariant sets 
(called Morse sets) with a partial order $(\PP, \leq)$ such that for every point $x\in S\setminus\bigcup\cM$ 
we can find $p,q\in\PP$ such that $q<p$, $\alpha(x)\subset M_p$ and $\omega(x)\subset M_q$.
The finest Morse decomposition of $S$  
consists of three Morse sets, $M_b=\{B\}$, $M_d=\{D\}$ and set $M_o$ composed of stationary points $A$, $C$, $E$, 
together with the heteroclinic orbits connecting them.
The connections between the Morse sets are presented in Figure~\ref{fig:flow_example_intro}(bottom left) in the form of an acyclic directed graph (digraph).
The Morse decomposition cannot capture the stationary points $A$, $C$, $E$ and the heteroclinic connections between them.

We define a {\em Morse predecomposition} of $S$ as an indexed family $\cM'=\{M_p\mid p\in\PP'\}$ of mutually disjoint, closed invariant sets 
such that $\alpha(x)\cap M_p\neq\emptyset$ and $\omega(x)\cap M_q\neq\emptyset$ for some $p,q\in\PP'$ where $\PP'$ is an arbitrary finite set.
With such a definition we can split $M_o$ into three sets $M_a$, $M_c$ and $M_e$ corresponding to stationary points $A$, $C$, and $E$ 
as independent elements of $\cM'$.
Thus, the resulting Morse predecomposition is $\cM'=\setof{M_b,M_d,M_{a},M_{b},M_{c}}$.  
The corresponding digraph representing the connections  is presented in Figure~\ref{fig:flow_example_intro}(bottom right).
We call it the {\em Conley model} of $\cM'$ (see Section \ref{subsec:predecomposition_continuous}).

The directed graph presented in Figure~\ref{fig:str-conn-comp} has four strongly connected components marked in light green. 
There are also three strongly connected sets which are not strongly connected components.
They are marked in orange.
Every strongly connected set is contained in a strongly connected component. 
However, a strongly connected component may have many non-empty, proper, disjoint, strongly connected subsets. 
Hence, even on a purely combinatorial level the internal structure of a strongly connected component may vary.

Under weak conditions, every strongly connected component of a digraph obtained from a cellular decomposition of the phase space of a flow identifies an isolating neighborhood and a Morse set inside. 
This raises the question whether a strongly connected subset of a strongly connected component also identifies an isolating neighborhood and an isolated invariant set. 
This is not true in general, but in the context of the combinatorial multivector field models discussed above we can provide conditions under which the answer is positive. 
Thus, we first introduce Morse predecompositions for multivector fields where the concept arises naturally, is relatively simple, and suggests how to set up the analogous definition for flows.  

This paper is organized as follows.
Section \ref{sec:preliminaries} contains preliminaries. 
In Section~\ref{sec:dynsys}
we introduce basic concepts and facts concerning dynamics, both classical and combinatorial.
The combinatorial formulation of Morse predecomposition, as well as its properties are presented in Section \ref{sec:pred-mvf}.
Section \ref{sec:predecomposition_continuous} contains the definition of the Morse predecomposition with the related results 
for classical dynamical systems.
Concluding remarks together with an outline of the further direction of the research are discussed 
in the final Section \ref{sec:concluding_remarks}.

\begin{figure}
  \includegraphics[width=0.9\textwidth]{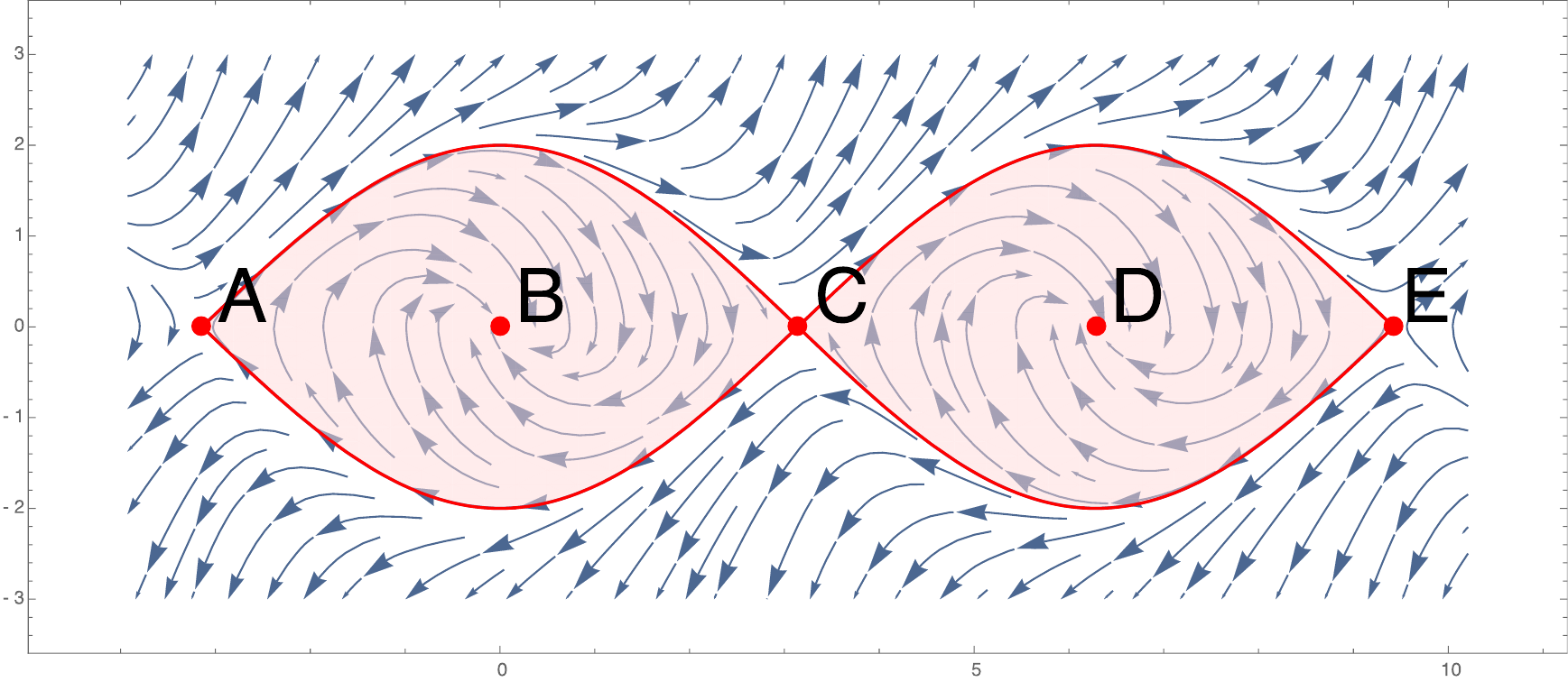}

  \includegraphics[height=2.5cm]{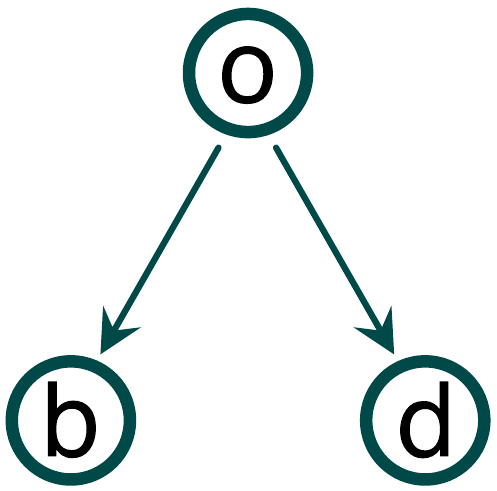}
  \hspace{1cm}
  \includegraphics[height=2.5cm]{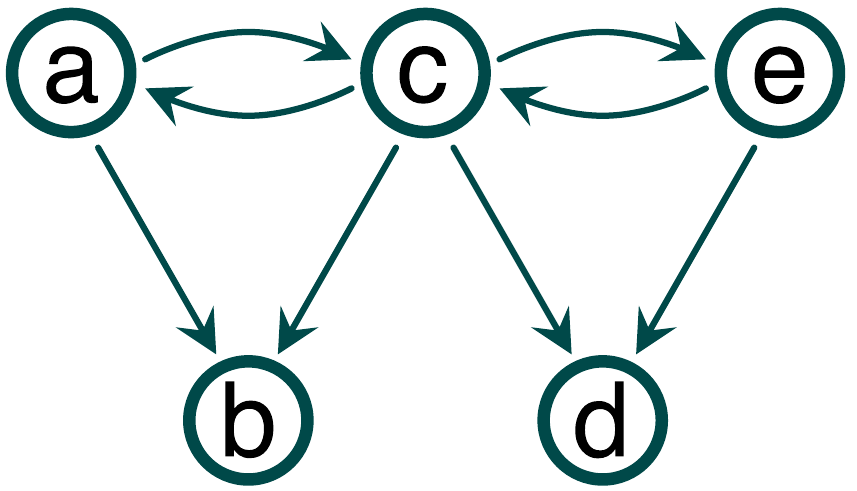}
  \caption{Top: An isolated invariant set $S$ marked in pink. 
      Bottom left: The flow defined Conley model for the minimal Morse decomposition of $S$.
      Bottom right: The flow defined Conley model for the minimal Morse predecomposition of $S$.
  }
  \label{fig:flow_example_intro}
\end{figure}

\begin{figure}
  \includegraphics[width=0.5\textwidth]{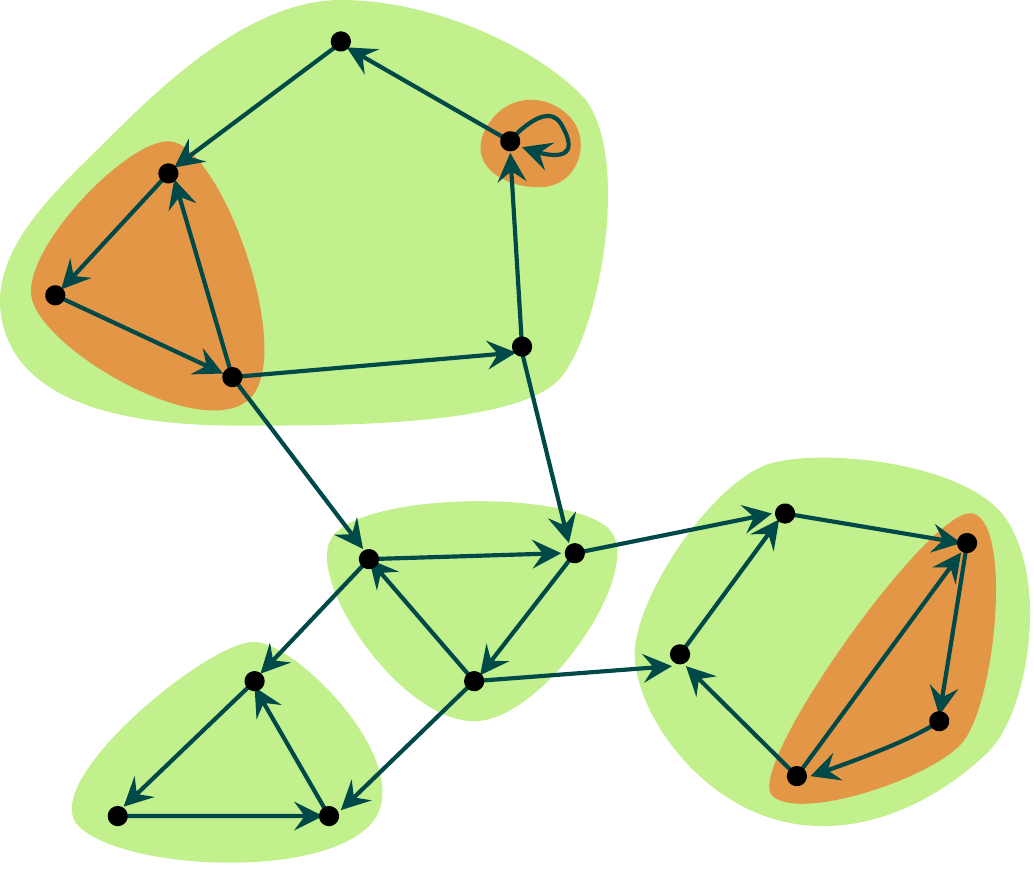}
  \caption{A directed graph with four strongly connected components marked in light green. The three sets marked in orange are strongly connected
  but not strongly connected components. 
  }
  \label{fig:str-conn-comp}
\end{figure}

\section{Preliminaries}
\label{sec:preliminaries}

This section gathers notation, terminology and basic results needed in the sequel.

\subsection{Sets, families and maps}
\label{sec:sets-families-maps}
We denote  the sets of real numbers, non-negative real numbers,  integers, non-negative integers and natural numbers respectively 
by~$\RR$, $\RR^+$ $\ZZ$, $\ZZ^+$, and $\NN$. 

By a {\em family} we mean a set whose elements are sets.  
Given a set~$X$, we denote the family of all subsets of~$X$ by~$\cP(X)$.

An {\em indexed family} is, formally speaking, a function assigning to every $\iota\in J$ the set $A_\iota$ (see~ \cite[Section I.1]{En1989}).
Informally, we will write an indexed family as  
$\{A_\iota\mid\iota\in J\}$.
Every family $\cA$ may be consider an indexed family $\{A\mid A\in\cA\}$. 
The converse is not true. Although an indexed family is often identified with the family of its values, there is a subtle difference. The family of values may be smaller, because $A_\iota$ and $A_\kappa$ may be equal as sets, but they are different in the indexed family if $\iota\neq\kappa$. 
In the case when the elements of a family are mutually disjoint, the difference between families and indexed families matters only in the case of the empty set, which may appear only once in a family but many times in an indexed family.

A {\em partition} of a set~$X$ is a family $\cE\subset\cP(X)$
of mutually disjoint, non-empty subsets of~$X$ such that $\bigcup\cE=X$.
Given two families $\cA,\cB\subset\cP(X)$ we say that $\cA$ is {\em inscribed} in $\cB$
or $\cA$ is a refinement of $\cB$
if for each $A\in\cA$ there is a $B\in\cB$ such that $A\subset B$.

We write $f:X\pto Y$ for a {\em partial map} from $X$ to $Y$, that is, a map
defined on a subset $\dom f\subset X$, called the {\em domain} of $f$,
and such that the set of values of $f$, denoted by
$\im f$, is contained in $Y$. 

We write $F:X\multimap Y$ to denote a {\em multivalued map}, that is a map
defined on $X$ with subsets of $Y$ as values. 
We identify such an $F$ with the relation
\[
\setof{(x,y)\in X\times Y\mid y\in F(x)}.
\]
Such an identification lets us define the {\em composition} $G\circ F$ of multivalued maps $F:X\multimap Y$ and $G:Y\multimap Z$
as the composition of relations. 
In particular, we are interested in the iterates of a multivalued map $F:X\multimap X$
considered as a multivalued dynamical system. 

\subsection{Relations}
\label{sec:relations}
Let $X$ be a set and let $R\subset X\times X$ be a binary relation in $X$.
Relation $R'\subset X\times X$ is an {\em extension} of relation $R$ if $R\subset R'$.
A set $X$ with a reflexive and transitive relation $R$ is called a \emph{preordered set}; relation $R$ is called a \emph{preorder}.
If $R$ is additionally antisymmetric we call it a \emph{partial order}, and $X$ a \emph{partially ordered set} (or a poset).
The total relation $X\times X$ is clearly a preorder containing every binary relation $R$ in $X$.
Hence, the family of preorders extending given $R$ is not empty. 
The intersection of this family is easily seen to be the smallest preorder in $X$ extending $R$.
We call it the {\em preorder induced by $R$} and denote it $\leq_R$.
The following proposition is straightforward.
\begin{prop}
\label{prop:po-extension}
If the preorder $\leq_R$ induced by relation $R\subset X\times X$ admits an extension which is a partial order, 
then $\leq_R$ itself is a partial order. 
\qed
\end{prop}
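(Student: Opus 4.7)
The plan is to exploit the minimality of $\leq_R$ as the smallest preorder extending $R$ and then transfer antisymmetry from the hypothetical partial-order extension back to $\leq_R$. Since $\leq_R$ is reflexive and transitive by construction, the only property that needs verification is antisymmetry.

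First, I would unpack the hypothesis: by assumption there exists a partial order $\preceq$ on $X$ that extends $\leq_R$, i.e., $\leq_R\;\subset\;\preceq$ as subsets of $X\times X$. (Even if the hypothesis were phrased as an extension of $R$ rather than of $\leq_R$, the same inclusion would follow from the minimality of $\leq_R$ among preorders containing $R$, since a partial order is a preorder.)

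Next, to establish antisymmetry of $\leq_R$, I would take arbitrary $x,y\in X$ with $x\leq_R y$ and $y\leq_R x$. The inclusion $\leq_R\;\subset\;\preceq$ yields $x\preceq y$ and $y\preceq x$, whence antisymmetry of $\preceq$ forces $x=y$. Thus $\leq_R$ is antisymmetric, and combined with reflexivity and transitivity this shows it is a partial order.

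There is no real obstacle here; the proof is essentially a one-line set-theoretic transfer argument. The only point requiring care is the precise meaning of \emph{extension}: it refers to set-theoretic inclusion of binary relations, so antisymmetry of the larger relation immediately implies antisymmetry of any subrelation on the same underlying set.
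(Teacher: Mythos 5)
Your proof is correct and is exactly the argument the paper has in mind (the paper marks the proposition as straightforward and omits the proof): antisymmetry passes down from the partial-order extension to the subrelation $\leq_R$, and reflexivity and transitivity hold by construction.
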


\subsection{Digraphs}
\label{sec:digraphs}
Note that the pair $(X,R)$ may be considered a {\em directed graph} (a {\em digraph}) 
with $X$ as its set of {\em vertices} and  $R$ as its set of {\em edges}.
Vice versa, given a digraph $(V,E)$, the collection of edges $E$ is a relation on the set of vertices $V$.
This lets us  freely mix the terminology typically used for relations with the terminology used for digraphs.

Given a digraph $G=(V,E)$, by a {\em walk in $G$ from $v$ to $w$ of length $k>0$}
we mean a sequence $v=v_0,v_1,\ldots v_k=w$ of vertices in $V$ such that $(v_{i-1},v_{i})\in E$ for $i=1,2,\ldots k$.
Such a walk is a {\em cycle} if $v_0=v_k$. A cycle of length one is a {\em loop}.
Digraph is {\em acyclic} if it has no cycles other then loops.
A subset $A\subset V$ of vertices is {\em strongly connected} if for any $v,w\in A$ there is a walk 
$v=v_0,v_1,\ldots v_k=w$ such that $v_i\in A$ for $i\in\{0,1,\ldots, k\}$.
A vertex $v\in V$ is strongly connected if $\{v\}$ is strongly connected which happens if and only if there is a loop at $v$.
A {\em strongly connected component} of $G$ is a strongly connected subset of vertices which is maximal 
with respect to inclusion. Note that two strongly connected components of $G$ are either disjoint or equal
but, in general,  the family of all strongly connected components need not be a partition of $V$.

By mixing the terminology of relations and digraphs,
we say that $G=(V,E)$ is {\em reflexive} if $E$ considered as a relation in $V$ is reflexive
or, equivalently, if there is a loop at every vertex $v$. Note that the family of strongly connected components
of a reflexive digraph $G=(V,E)$ is a partition of the vertex set $V$.
Given a reflexive digraph $G=(V,E)$, we write $\leq_G$ for the \emph{preorder in $V$ induced by $E$}.
We note that the preorder $\leq_G$ of an acyclic digraph $G$ is a always a partial order.

\subsection{Topological spaces}
\label{sec:top}
For the basic terminology concerning topological
spa\-ces we refer the reader to~\cite{En1989,munkres:00a}.
In terms of notation, given a topological space $X$, we denote by $\inte A$ the {\em interior} of~$A\subset X$
and by $\cl A$ the closure of $A$. 
A subset~$A\subset X$ is {\em locally closed} if $\mo A:=\cl A\setminus A$ is closed 
(see~\cite[Problem 2.7.1]{En1989} for several equivalent conditions for local closedness).

A topological space~$X$ is a {\em finite topological space} if the underlying set~$X$ is finite. 
In this paper we consider finite topological spaces satisfying $T_0$ separation axiom.
By Alexandrov Theorem~\cite{Al1937} every finite, $T_0$ topological space $(X,\cT)$ can be uniquely identified with a finite poset $(X, \leq_\cT)$ defined by 
\begin{equation}\label{eq:poset_topology}
    x\leq_\cT y\ \Longleftrightarrow\ x \in \cl y.
\end{equation}
In particular, a subset $A$ of a finite topological space is locally closed if and only if 
it is \emph{convex} with respect to the poset, that is if $x\leq y\leq z$ and $x,z\in A$ imply $y\in A$.

\section{Dynamical systems}
\label{sec:dynsys}

In this section we gather basic concepts and results in dynamics needed in the paper. 
We first recall classical definitions and statements for flows and then we introduce
the concept of a combinatorial multivector field and the associated combinatorial dynamical system. 
Note that our notation and terminology for combinatorial systems shadows that for flows. 
We do so to emphasize the similarities. 

\subsection{Flows}
Assume that $(X, d)$ is a locally compact metric space and $\varphi: X\times \RR\rightarrow X$ is a flow on $X$. 

We recall that a partial map $\gamma:\RR\pto X$ is a {\em solution} of $\varphi$ if $\gamma(s+t)=\varphi(\gamma(s),t)$
for all $s,s+t\in\dom\gamma$. 
A {\em solution through $x\in X$} is a solution such that $x\in\im\gamma$.
A {\em solution in $S\subset X$} is a solution such that $\im\gamma\subset S$.
A solution $\gamma$ is {\em full} if $\dom\gamma=\RR$; otherwise it is {\em partial}.

We recall that the \emph{maximal invariant set} of $N\subset X$ is given by
\[
\Inv(N) = \Inv(N, \varphi) := \setof{ x\in N\mid \varphi(\RR,x)\subset N}.
\]
We say that a set $S\subset X$ is \emph{invariant} if $\Inv S = S$.
A compact set $N\subset X$ is an \emph{isolating neighborhood} if $\Inv N \subset \Int N$.
An invariant set $S\subset X$ is \emph{an isolated invariant set} if 
there exists an isolating neighborhood $N$ such that $\Inv N = S$.

A fundamental feature of isolating neighborhoods is that they persist under a small perturbation. 
More precisely, we have the following proposition.

\begin{prop}
\label{prop:perturbation}
\cite[Proposition 1.1]{MischMroz2002}
Given  a continuously parametrized family of dynamical systems
\[
    \varphi_\lambda:\RR\times X\rightarrow X,\quad \lambda\in[-1,1]
\]
and an isolating neighborhood for $\varphi_0$, there is an $\varepsilon>0$ such that
$N$ is also an isolating neighborhood for $\varphi_\lambda$ with $\lambda\in(-\varepsilon,\varepsilon)$. 
\qed
\end{prop}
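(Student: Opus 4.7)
The plan is to argue by contradiction, exploiting compactness of $N$ together with continuous dependence of solutions on the initial condition and on the parameter $\lambda$. Suppose the conclusion fails. Then there exist sequences $\lambda_n\to 0$ and $x_n\in \Inv(N,\varphi_{\lambda_n})$ with $x_n\notin\Int N$. The points $x_n$ lie in $N\setminus\Int N\subset \partial N$, which is compact because $N$ is. After passing to a subsequence we may therefore assume $x_n\to x_0$ for some $x_0\in\partial N\cap N$.

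Next, I would invoke continuity of the map $(\lambda,t,x)\mapsto\varphi_\lambda(t,x)$, which is precisely the content of ``continuously parametrized family of dynamical systems''. For each fixed $t\in\RR$ this yields
\[
\varphi_{\lambda_n}(t,x_n)\longrightarrow \varphi_0(t,x_0).
\]
By hypothesis $\varphi_{\lambda_n}(\RR,x_n)\subset N$, and $N$ is closed, so passing to the limit gives $\varphi_0(t,x_0)\in N$ for every $t\in\RR$. This shows $x_0\in\Inv(N,\varphi_0)$. Since $N$ is an isolating neighborhood for $\varphi_0$, we have $\Inv(N,\varphi_0)\subset\Int N$, contradicting $x_0\in\partial N$. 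This finishes the argument.

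The only nontrivial ingredient, and what I consider the main obstacle to name explicitly, is the convergence $\varphi_{\lambda_n}(t,x_n)\to\varphi_0(t,x_0)$ inherited from the parametrized flow. Fortunately only \emph{pointwise} convergence in $t$ is needed, because we only want the consequence $\varphi_0(t,x_0)\in N$ for every $t$; no uniformity on $\RR$ is required. Thus the standard continuous dependence on initial conditions and parameters, applied on each fixed compact time interval containing $t$, already suffices, and the compactness of $N$ does the rest.
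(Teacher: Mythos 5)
Your argument is correct. The paper itself gives no proof of this proposition --- it is quoted from \cite[Proposition 1.1]{MischMroz2002} with the proof omitted --- and your compactness-plus-contradiction argument is precisely the standard one used there: extract $x_n\in\Inv(N,\varphi_{\lambda_n})\setminus\Int N\subset\partial N$, pass to a convergent subsequence, use joint continuity of $(\lambda,t,x)\mapsto\varphi_\lambda(t,x)$ and closedness of $N$ to conclude $x_0\in\Inv(N,\varphi_0)\subset\Int N$, contradicting $x_0\in\partial N$. Your remark that only pointwise convergence in $t$ is needed is also the right observation; no step is missing.
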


The \emph{omega limit set of a point $x\in X$}, denoted by $\omega(x)$, is the maximal invariant set contained in the closure of the forward solution passing through $x$: 
\begin{align*}
  \omega(x) &:= \bigcap_{t>0}\cl\varphi(x, [t,+\infty)).
\end{align*}
Symmetrically, we have the \emph{alpha limit set of $x$}, denoted by $\alpha(x)$ defined as
\begin{align*}
  \alpha(x) &:= \bigcap_{t<0}\cl\,\varphi(x,(-\infty,t]).
\end{align*}

As point $x$ uniquely determines the corresponding solution $\gamma(t):=\varphi(x, t)$ we use also notation $\alpha(\gamma)$ and $\omega(\gamma)$.
This will allow us to be more consistent with the notation used for multivalued combinatorial dynamical systems.

Let $S\subset X$ be a compact invariant set. 
An invariant set $A\subset S$ is an \emph{attractor in $S$} if there exists a neighborhood $U$ of $A$ such that $\omega(U\cap S) = A$. A compact invariant set $R\subset S$ is a \emph{repeller in $S$} if there exists a neighborhood $V$ of $R$ such that $\alpha(V\cap S)=R$. If $S=X$ we just say that $A$ is an attractor (repeller).

\subsection{Chain recurrence}\label{subsec:chains}

Let $S$ be a compact metric space.
Fix an open cover $\cU$ of $S$ and a $T>0$.
A $(\cU, T)$-\emph{chain from $x$ to $y$} of length $n$ is a sequence $x=x_0,x_1,...,x_{n}=y$
such that there exist sequences $t_1,t_2,\ldots,t_n$ of reals numbers and $U_1, U_1, \ldots, U_n\in\cU$ 
satisfying $t_i\geq T$ and $x_i, \varphi(x_{i-1}, t_i)\in U_i$
for each $i=1,2\ldots n$.
Let us denote by $R(S)$ the set of points $x\in S$ such that for every open cover $\cU$ of $S$, and every $T>0$ there exists a non-constant $(\cU,T)$-chain in $S$ from $x$ to itself.
A set $A\subset S$ is \emph{chain recurrent} if $R(A)=A$.

As the consequence of \cite[4.1.D]{Conley1978} and \cite[6.3.C]{Conley1978} and dual results for $\alpha$ limit sets we obtain the following proposition. 
\begin{prop}
\label{prop:limits_are_chain_recurrent}
  For every point $x\in S$ limit sets $\alpha(x)$ and $\omega(x)$ are non-empty, compact, invariant, connected and chain recurrent.
\end{prop}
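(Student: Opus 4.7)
The plan is to establish each of the five properties for $\omega(x)$; the $\alpha(x)$ claims then follow by applying the same reasoning to the time-reversed flow $\tilde\varphi(x,t):=\varphi(x,-t)$, under which $\alpha_\varphi(x)=\omega_{\tilde\varphi}(x)$.

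For non-emptiness and compactness, I would observe that $\omega(x)=\bigcap_{t>0}\cl\varphi(x,[t,\infty))$ is a nested intersection of non-empty closed subsets of the compact space $S$, so the finite intersection property in a compact space yields a non-empty compact intersection. Invariance follows from continuity of $\varphi$: if $t_n\to\infty$ and $\varphi(x,t_n)\to y$, then for every $s\in\RR$ we have $\varphi(y,s)=\lim_n\varphi(x,t_n+s)\in\omega(x)$ since $t_n+s\to\infty$.

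For connectedness I would argue by contradiction. Suppose $\omega(x)=A\sqcup B$ with $A,B$ non-empty, compact, and $d(A,B)=2\delta>0$. The trajectory $\varphi(x,\cdot)$ enters the $\delta$-neighborhoods of $A$ and of $B$ at arbitrarily large times, so by continuity it passes through the closed set $K:=S\setminus(N_\delta(A)\cup N_\delta(B))$ at a sequence of times $s_n\to\infty$. Compactness of $K$ yields a cluster point $z\in\omega(x)\cap K$ at distance $\geq\delta$ from $A\cup B$, a contradiction.

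The main obstacle is chain recurrence, since a direct construction must produce chain points inside $\omega(x)$ and the flow is only uniformly continuous on bounded time windows. Given $y\in\omega(x)$, an open cover $\cU$ of $\omega(x)$ with Lebesgue number $\delta>0$, and $T>0$, I would restrict jump times to the bounded window $[T,2T]$ so that uniform continuity of $\varphi$ on the compact set $S\times[0,2T]$ provides $\varepsilon>0$ with $d(\varphi(p,t),\varphi(q,t))<\delta/2$ whenever $d(p,q)<\varepsilon$ and $t\in[0,2T]$. Using that $\varphi(x,t)$ is eventually within $\varepsilon/2$ of $\omega(x)$ and returns within $\varepsilon/2$ of $y$ infinitely often, I would pick times $\tau_0<\tau_1<\dots<\tau_n$ with $\tau_i-\tau_{i-1}\in[T,2T]$ and points $y_i\in\omega(x)$ within $\varepsilon/2$ of $\varphi(x,\tau_i)$, with $y_0=y_n=y$. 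Then $d\bigl(\varphi(y_{i-1},\tau_i-\tau_{i-1}),y_i\bigr)<\delta$ by uniform continuity and the triangle inequality, so both lie in a common element of $\cU$ by the Lebesgue property, producing the desired non-constant $(\cU,T)$-chain in $\omega(x)$. More economically, the four properties of the first paragraph are the content of \cite[4.1.D]{Conley1978}, and chain recurrence is \cite[6.3.C]{Conley1978}, which together with the time-reversal remark give the proposition as stated.
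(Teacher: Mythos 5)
Your proof is correct, but note that the paper does not actually prove this proposition: it is stated as an immediate consequence of 4.1.D and 6.3.C of Conley's monograph together with time-reversal duality for $\alpha$-limit sets, which is precisely the one-sentence justification you append at the end. The three preceding paragraphs of your proposal therefore constitute a genuinely different, self-contained route that unpacks what the citation hides, and they all go through: non-emptiness and compactness from the nested intersection of non-empty compact subsets of the compact set $S$, invariance and connectedness by the standard arguments, and chain recurrence by your bounded-time-window construction. Two small points to tidy in that last step: the triangle inequality only gives $d\bigl(\varphi(y_{i-1},\tau_i-\tau_{i-1}),y_i\bigr)<\delta/2+\varepsilon/2$, so you should additionally impose $\varepsilon\le\delta$ (harmless, since shrinking $\varepsilon$ only strengthens the uniform-continuity estimate); and you should say explicitly that $\varphi(y_{i-1},\tau_i-\tau_{i-1})\in\omega(x)$ by the invariance you already established, since the Lebesgue number of the cover $\cU$ of $\omega(x)$ only controls subsets of $\omega(x)$, and this is also what makes the resulting chain a chain \emph{in} $\omega(x)$ as the definition requires. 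What your direct argument buys is independence from the cited source and a verification that the paper's definition of $(\cU,T)$-chain is indeed satisfied; what the paper's citation buys is brevity. Either is acceptable here.
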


By \cite[Theorem 4.12(5)]{Akin1993} we immediately get the following property. 
\begin{prop}\label{prop:subset_of_limset_is_not_attr}
  If $C\subset S$ is a connected, invariant, chain recurrent set and there exists 
  a non-empty isolated invariant set $A$ such that $A\varsubsetneq C$, then $A$ is neither an attractor nor a repeller in $C$.
\end{prop}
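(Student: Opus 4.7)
The plan is a short invocation of \cite[Theorem~4.12(5)]{Akin1993}. I would argue by contradiction: assume $A$ is an attractor in $C$ (the repeller case is dual under time reversal). Since $A$ is nonempty and $A \subsetneq C$, $A$ is a proper attractor of the compact invariant set $C$, witnessed by some neighborhood $U$ with $\omega(U \cap C) = A$. Conley--Akin theory then supplies a dual repeller
\[
A^{*} := \{ x \in C \mid \omega(x) \cap A = \emptyset\},
\]
which is nonempty, compact, invariant, and disjoint from $A$, together with the decomposition $C = A \sqcup A^{*} \sqcup K$, where $K := \{ x \in C \mid \alpha(x) \subset A^{*},\ \omega(x) \subset A\}$ is the set of connecting orbits from $A^{*}$ to $A$.

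The decisive step is that chain recurrence of $C$ forces $K = \emptyset$. This is exactly the content of \cite[Theorem~4.12(5)]{Akin1993}, which characterizes $R(C)$ as the intersection of $A \cup A^{*}$ taken over all attractors $A$ of $C$; hence if $C = R(C)$, no point of $C$ can lie on a nontrivial connecting orbit, so $K = \emptyset$. Intuitively, once a sufficiently fine $(\cU,T)$-chain enters the attracting neighborhood $U$ it is trapped there, so a chain from a point of $K$ cannot return to itself.

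It follows that $C = A \sqcup A^{*}$ is a disjoint union of two nonempty, compact (hence closed in $C$) subsets, contradicting the connectedness of $C$. The repeller case follows verbatim by reversing time. The only obstacle is a routine matching of definitions: one must check that the ``attractor in $C$'' notion used in this paper aligns with the attractor--repeller formalism of \cite{Akin1993}, which is immediate from the defining neighborhood $U$, since $\omega(U \cap C) = A$ simultaneously encodes both attraction and trapping.
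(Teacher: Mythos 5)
Your argument is correct and matches the paper's intent exactly: the paper gives no written proof, stating only that the proposition follows ``immediately'' from \cite[Theorem~4.12(5)]{Akin1993}, and your attractor--repeller decomposition with the connectedness contradiction is precisely the standard unpacking of that citation. No gaps; the only implicit point (compactness of $C$, needed for nonemptiness of limit sets and of the dual repeller) is consistent with how the proposition is applied to limit sets elsewhere in the paper.
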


The following lemma may be proved analogously to \cite[Theorem 6.2]{Churchill1971}.
\begin{lem}\label{lem:exist_alpha_omega_point}
  Assume $S$ is connected and $A\varsubsetneq S$ is a non-empty, isolated invariant set. 
  If $A$ is not an attractor (respectively not a repeller), 
  then there exists an $x\in S\setminus A$ such that $\alpha(x)\subset A$ (respectively $\omega(x)\subset A$).
\end{lem}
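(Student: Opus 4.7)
The plan is to prove the two implications in parallel by time reversal, so I focus on showing that if $A$ is not an attractor in $S$, then there exists $x \in S \setminus A$ with $\alpha(x) \subset A$; the ``not a repeller'' case will then follow by applying the same argument to the time-reversed flow $\psi(x,t) := \varphi(x,-t)$, under which repellers become attractors and $\alpha$-limits become $\omega$-limits. The guiding picture is that if trajectories starting arbitrarily close to $A$ are not all trapped by $A$ in forward time, then following such a trajectory to its first exit from a fixed isolating neighborhood of $A$ and passing to the limit produces a point whose entire backward orbit is trapped inside that neighborhood, forcing its $\alpha$-limit into $A$.

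First I would rephrase the hypothesis using the basin $W(A) := \setof{x \in S \mid \omega(x) \subset A}$: the failure of $A$ to be an attractor means $W(A)$ is not an $S$-neighborhood of $A$, so I can pick a sequence $y_n \in S$ with $\dist(y_n, A) \to 0$ and $\omega(y_n) \not\subset A$; automatically $y_n \notin A$, since $A \subset W(A)$. Fix a compact isolating neighborhood $N$ of $A$ contained in $S$ (which exists because $S$ is itself invariant, so intersecting an ambient isolating neighborhood of $A$ with $S$ produces one inside $S$). Because $\omega(y_n)$ is invariant, if the forward trajectory of $y_n$ remained in $N$ we would have $\omega(y_n) \subset \Inv N = A$, a contradiction; so each forward orbit exits $N$, and I set $t_n := \inf\setof{t > 0 \mid \varphi(y_n,t) \notin N}$, which is positive because $y_n \in \Int N$ for $n$ large. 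Then $z_n := \varphi(y_n,t_n) \in \partial N$, and by compactness of $\partial N$ I pass to a subsequence with $z_n \to z \in \partial N$; the inclusion $A \subset \Int N$ forces $A \cap \partial N = \emptyset$, so $z \in S \setminus A$.

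The heart of the argument is to show $t_n \to \infty$. If along some subsequence $t_n \to t_* \in [0,\infty)$, compactness of $A$ lets me also assume $y_n \to a \in A$, and continuity of $\varphi$ together with invariance of $A$ gives $z = \varphi(a,t_*) \in A$, contradicting $z \in \partial N$. Hence $t_n \to \infty$, and for any fixed $s > 0$ the first-exit definition of $t_n$ yields $\varphi(y_n, t_n - s) \in N$ as soon as $t_n > s$; passing to the limit gives $\varphi(z,-s) \in N$. Therefore the entire backward orbit through $z$ lies in the compact set $N$, so $\alpha(z) \subset \Inv N = A$, and $z$ is the desired point.

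I expect the most delicate step to be the proof that $t_n \to \infty$: it is the only place where the isolation condition $A \subset \Int N$ is used in an essential way, and it must combine compactness of $A$ (to extract a convergent sequence of base points) with the separation $A \cap \partial N = \emptyset$ (to reach a contradiction). A secondary technical point is the choice of an isolating neighborhood $N$ inside $S$, which follows from invariance of $S$ but deserves an explicit line of justification. Connectedness of $S$ is not used in this direction; it appears in the hypothesis presumably to rule out degenerate situations, such as $A$ being a clopen component of $S$, in which case $A$ would automatically be both an attractor and a repeller and the hypothesis ``not an attractor'' would be vacuous.
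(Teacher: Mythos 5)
Your overall architecture --- first-exit times from a fixed isolating neighborhood $N$, the limit $t_n\to\infty$, and the backward-trapped limit point $z$ --- is exactly the Churchill-style argument the paper has in mind, and that part of your write-up is sound. The gap is in the very first reduction. You claim that the failure of $A$ to be an attractor means the pointwise basin $W(A)=\{x\in S\mid \omega(x)\subset A\}$ is not an $S$-neighborhood of $A$, and from this you extract points $y_n\to A$ with $\omega(y_n)\not\subset A$. That implication is false for the set-valued notion of attractor used here, where $\omega(U\cap S)$ means $\bigcap_{t>0}\cl\varphi(U\cap S,[t,+\infty))$ (the only reading under which the lemma itself is true). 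Counterexample: the flow on $S=S^1$ with a single rest point $p$ and every other orbit running from $p$ around the circle back to $p$. Here $A=\{p\}$ is a proper, non-empty, isolated invariant set which is not an attractor (no small arc around $p$ is positively invariant, and $\omega(U\cap S)=S^1$ for every neighborhood $U$ of $p$), yet $\omega(x)=\{p\}\subset A$ for every $x\in S$, so your sequence $y_n$ does not exist and your proof produces nothing --- even though the conclusion of the lemma holds there, since every $x\neq p$ has $\alpha(x)=A$.

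The repair is to extract from ``not an attractor'' the weaker but correct statement that points arbitrarily close to $A$ have forward orbits leaving $N$. Indeed, if some neighborhood $V$ of $A$ with $V\cap S\subset\inter N$ had all forward orbits trapped in $N$, then $\omega(V\cap S)$ would be a closed invariant subset of $N$, hence contained in $\Inv N=A$, while the reverse inclusion $A\subset\omega(V\cap S)$ always holds because $A$ is invariant and contained in $V\cap S$; this would force $\omega(V\cap S)=A$ and make $A$ an attractor. Taking $V$ to be the $1/n$-neighborhood of $A$ therefore gives $y_n\to A$ whose forward orbits exit $N$, after which your definition of $t_n$, your proof that $t_n\to\infty$, and the conclusion $\alpha(z)\subset\Inv N=A$ with $z\in\partial N$ (hence $z\notin A$) go through essentially verbatim. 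Your remarks on time reversal and on the role of connectedness are fine.
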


\begin{thm}\label{thm:exist_alpha_omega_point}
Assume  $C\subset S$ is a connected, invariant, chain recurrent set and $A$ is a non-empty isolated invariant set in $C$.
If $A\varsubsetneq C$, then there exist points $x, y\in C\setminus A$ such that $\alpha(x)\subset A$ and $\omega(y)\subset A$. 
\end{thm}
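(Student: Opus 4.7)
The plan is to chain together Proposition~\ref{prop:subset_of_limset_is_not_attr} with two applications of Lemma~\ref{lem:exist_alpha_omega_point}; essentially no new work is required. Since $C$ is connected, invariant and chain recurrent, and $A \varsubsetneq C$ is a non-empty isolated invariant set, Proposition~\ref{prop:subset_of_limset_is_not_attr} applies verbatim with $C$ in the role of the ambient compact invariant set, and concludes that $A$ is neither an attractor nor a repeller in $C$.

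Next I would invoke Lemma~\ref{lem:exist_alpha_omega_point} twice, each time substituting $C$ for the connected invariant set $S$ appearing in its statement. The hypotheses match: $C$ is connected and $A \varsubsetneq C$ is a non-empty isolated invariant set. The ``not an attractor'' case of the lemma then produces a point $x \in C \setminus A$ with $\alpha(x) \subset A$, and the ``not a repeller'' case produces a point $y \in C \setminus A$ with $\omega(y) \subset A$. Concatenating these two conclusions gives exactly the statement of the theorem.

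The only point that needs mild care is the reading of ``attractor'' and ``isolated invariant set'' inside Lemma~\ref{lem:exist_alpha_omega_point}: for the reduction to work, these notions must be interpreted relative to the ambient invariant set $C$, which is the same reading used in Proposition~\ref{prop:subset_of_limset_is_not_attr} (where the phrase ``attractor in $C$'' is explicit). Since $C$ is invariant under $\varphi$, the restricted flow on $C$ is well-defined and the notions of $\alpha$- and $\omega$-limit set of $x \in C$ are unchanged, while any isolating neighbourhood $N$ of $A$ in $X$ gives $N \cap C$ as a relatively isolating neighbourhood in $C$; this justifies the substitution $S := C$ in the lemma. Once this bookkeeping is in place, the theorem is immediate, so the real mathematical content sits in Proposition~\ref{prop:subset_of_limset_is_not_attr} and Lemma~\ref{lem:exist_alpha_omega_point}, and there is no substantive obstacle to overcome here.
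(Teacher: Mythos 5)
Your proposal is correct and is essentially identical to the paper's own proof: Proposition~\ref{prop:subset_of_limset_is_not_attr} shows $A$ is neither an attractor nor a repeller in $C$, and Lemma~\ref{lem:exist_alpha_omega_point} applied with $C$ in the role of $S$ yields both points. The extra remark about reading the lemma relative to the restricted flow on $C$ is a reasonable (and correct) piece of bookkeeping that the paper leaves implicit.
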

\begin{proof}
It follows from Proposition \ref{prop:subset_of_limset_is_not_attr} 
that $A$ is neither an attractor nor a repeller in $C$.
Hence, the conclusion follows  immediately from Lemma~\ref{lem:exist_alpha_omega_point}.
\qed
\end{proof}

\subsection{Combinatorial dynamical systems}\label{subsec:cds}
Let $X$ be a finite, $T_0$ topological space.
A {\em combinatorial dynamical system} on $X$ induced by a multivalued map $\Pi:X\multimap X$ is a multivalued map
$\Pi:X\times\ZZ^+\multimap X$ defined recursively for $x\in X$ and $n\in\ZZ^+$ by $\Pi(x,0):=\{x\}$ and $\Pi(x,n+1):=\Pi(\Pi(x,n))$.
Formally, the map $\Pi:X\multimap X$ is the {\em generator} of the combinatorial dynamical system but, to simplify the terminology, 
we often refer to $\Pi:X\multimap X$ as the combinatorial dynamical system. 

A \emph{solution} of $\Pi$ is a partial map $\gamma:\ZZ\pto X$ such that $\dom\gamma$ is an interval in $\ZZ$ 
and $\gamma(t+1)\in\Pi\left(\gamma(t)\right)$ for $t,t+1\in\dom\gamma$.
We recall that  $\Pi$ may be considered as a binary relation in $X$, hence, 
also as a directed graph $G_\Pi$.
In terms of the digraph $G_\Pi$, a solution may be identified with a walk in $G_\Pi$.
If $\dom\gamma$ is finite, we call $\gamma$ a \emph{path}. 
If $\dom\gamma=\ZZ$, we call $\gamma$  a \emph{full solution}.
We denote the restriction of a full solution $\gamma$ to the intervals $[p,q]$, $[0,\infty)$, and $(-\infty,0]$ 
respectively by $\gamma_{[p,q]}$, $\gamma^+$, and $\gamma^-$.

A \emph{shift} is a map $\tau_s:\ZZ\ni t\mapsto t+s$.
Let $\gamma$ and $\psi$ be solutions such that $\min\dom\gamma=p$ and $\max\dom\psi=q$ with $p,q\in\ZZ$.
Moreover, assume that $\gamma(p)\in\Pi(\psi(q))$.
Then, we define the {\em composition} of $\gamma$ and $\psi$ by $\gamma\cdot\psi:=\gamma\cup(\psi\circ\tau_{q-p})$.
It is straightforward to observe that the composition of solutions is also a solution.

\subsection{Combinatorial dynamics induced by multivector fields}\label{subsec:mvf}
In this subsection we briefly recall basic concepts of combinatorial dynamics generated by multivector fields. 
For a comprehensive introduction into multivector fields see \cite{LKMW2020}.

A \emph{multivector field} on $X$ is a partition $\cV$ of $X$ into locally closed subsets of $X$ (see Sec.~\ref{sec:top} for the definition of local closedness).
We refer to the elements of the partition as \emph{multivectors}.
Note that $\mo V=\cl V\setminus V$ is always closed for a multivector $V$, because  a multivector is locally closed.
We say that multivector $V$ is \emph{critical} if the relative singular homology $H(\cl V, \mo V)$ is non-trivial.
Otherwise, we say that $V$ is \emph{regular}.

For $A\subset X$ we set 
\[
\cV_A:=\{V\in\cV\mid V\cap A\neq\emptyset\}\quad\text{ and }\quad\supsetV{A}:=\bigcup\cV_A.
\]
We say that $A$ is \emph{$\cV$-compatible} if $A=\supsetV{A}$.
In particular, for every $x\in X$ set $\supsetV{x}:=\supsetV{\{x\}}$ is the unique multivector $V\in\cV$ containing $x$.

A multivector field $\cV$ induces a combinatorial dynamical system generated by $\Pi_\cV:X\multimap X$ 
given for an $x\in X$ by
\begin{equation}\label{eq:mvf_map}
  \Pi_\cV(x):=\cl x\cup[x]_\cV.
\end{equation}
As mentioned $\Pi_\cV$ induces a directed graph $G_{\Pi_\cV}$.
We will denote it for short as $G_\cV$.

By a solution of $\cV$ we mean a solution of the combinatorial dynamical system $\Pi_\cV$.
For a full solution $\gamma$ we define combinatorial \emph{limit sets} as
\begin{align}\label{eq:mvf_limit_sets}
  \alpha(\gamma):= \supconvsetV{\bigcap_{t\in{\mathbb{Z}^-}} \gamma((-\infty,t])}  
  \quad\text{   and   }\quad
  \omega(\gamma):= \supconvsetV{\bigcap_{t\in{\mathbb{Z}^+}} \gamma([t,\infty))},
\end{align}
where $\supconvsetV{A}$ denotes the \emph{$\cV$-hull of $A$}, that is the smallest locally closed and $\cV$-compatible set containing~$A$.

We say that a full solution $\gamma$ is \emph{non-essential} if either $\alpha{(\gamma)}$ or $\omega{(\gamma)}$ is contained in a single regular multivector; 
otherwise we say that $\gamma$ is \emph{essential}. These definitions of non-essential and essential solutions are easily seen to be equivalent to the 
definitions given in~\cite{LKMW2020}.
We denote by $\esol(A)$ the set of all essential solutions in $A$.
Additionally we set
\begin{align*}
  \esol(x, A):= \{\gamma\in\esol(A)\mid \gamma(0)=x\}.
\end{align*}
and we define the \emph{maximal invariant subset of} $S\subset X$ as
\[
  \inv S:=\{x\in S\mid\esol(x, S)\neq\emptyset\}.
\]
We say that set $S$ is {\em invariant} if $\inv S=S$.
A closed set $N$ is an \emph{isolating set} for an invariant set $S$ 
if $\Pi_\cV(S)\subset N$ and every path in $N$ with endpoints in $S$ is contained in $S$.
If an invariant set admits an isolating set we call it an \emph{isolated invariant set}.
Note that the combinatorial notion of isolation is not an exact counterpart of the classical one.
In particular,  the same set can be an isolating set for two different isolated invariant sets.
The need for a modification of isolation in finite topological spaces
comes from the tightness of such spaces caused by the lack of Hausdorff axiom. 

\begin{thm}
  \cite[Propositions 4.10 and 4.13,Theorem 4.13]{LKMW2020}
  \label{thm:lcl_vcomp_iis}
  Assume $S\subset X$ is invariant. Then $S$ is an isolated invariant set if and only if
  $S$ is  locally closed and $\cV$-compatible.
\end{thm}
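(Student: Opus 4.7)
The plan is to handle the two implications separately, with the canonical choice $N := \cl S$ as the isolating set in the reverse direction.

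For the forward direction, let $N$ be an isolating set for $S$. I would first establish $\cV$-compatibility: for $x \in S$ and $y \in [x]_\cV$, the three-step walk $x, y, x$ is a valid $\Pi_\cV$-path, since $y \in [x]_\cV \subset \Pi_\cV(x)$ and $x \in [y]_\cV \subset \Pi_\cV(y)$ (multivectors partition $X$, so $[y]_\cV = [x]_\cV$). As $y \in \Pi_\cV(S) \subset N$, this path lies in $N$ with both endpoints equal to $x \in S$, forcing $y \in S$. For local closedness, I would verify convexity with respect to the poset from \eqref{eq:poset_topology}: if $x \in \cl y$ and $y \in \cl z$ with $x, z \in S$, then $y \in \cl z \subset \Pi_\cV(z) \subset N$ and $x \in \cl y \subset \Pi_\cV(y)$, so the path $z, y, x$ lies in $N$ with endpoints in $S$, forcing $y \in S$.

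For the reverse direction, set $N := \cl S$, which is closed by definition. Using $\cV$-compatibility, $\Pi_\cV(x) = \cl x \cup [x]_\cV \subset \cl S \cup S = \cl S$ for every $x \in S$, so $\Pi_\cV(S) \subset N$. The substantive step is to show that every path in $N$ with endpoints in $S$ stays in $S$. My plan is a minimal-excursion argument: suppose $\gamma(0), \gamma(n) \in S$ but $\gamma(k) \notin S$ for some $k$. Then $\gamma(k) \in \mo S$, which is closed by local closedness, so $\cl \gamma(k) \subset \mo S$. Let $j$ be the largest index in $\{0, \ldots, k\}$ with $\gamma(j) \in S$ and $m$ the smallest in $\{k, \ldots, n\}$ with $\gamma(m) \in S$. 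The step $\gamma(j) \to \gamma(j+1)$ cannot have $\gamma(j+1) \in [\gamma(j)]_\cV$, as then $\gamma(j+1) \in S$ by $\cV$-compatibility; hence $\gamma(j+1) \in \cl \gamma(j)$. Symmetrically, one cannot have $\gamma(m) \in \cl \gamma(m-1) \subset \cl(\mo S) = \mo S$; so $\gamma(m) \in [\gamma(m-1)]_\cV$, whence $\gamma(m-1) \in [\gamma(m)]_\cV \subset S$ by $\cV$-compatibility, contradicting the minimality of $m$.

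The main obstacle is precisely this last argument, since it must deploy both hypotheses in tandem: local closedness is needed to keep $\mo S$ closed under $\cl$ at the right end of the excursion, while $\cV$-compatibility is needed to trap the two multivector-type transitions on either end. Once it is in place, $N = \cl S$ is an isolating set, so $S$ is an isolated invariant set.
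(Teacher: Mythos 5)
Your proof is correct. The paper itself does not prove this theorem but imports it from \cite[Propositions 4.10 and 4.13, Theorem 4.13]{LKMW2020}, and your argument is a faithful self-contained reconstruction along the standard lines: the three-step walks $x,y,x$ and $z,y,x$ violating isolation yield $\cV$-compatibility and convexity (hence local closedness), and $N:=\cl S$ together with the excursion argument yields the converse. One small remark: in the excursion argument the analysis at the entry index $j$ is never actually used---the contradiction follows entirely from the exit index $m$, where closedness of $\mo S$ rules out $\gamma(m)\in\cl\gamma(m-1)$, forcing $\gamma(m)\in[\gamma(m-1)]_\cV$, and $\cV$-compatibility then pulls $\gamma(m-1)$ into $S$.
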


As an immediate consequence of Theorem~\ref{thm:lcl_vcomp_iis} we get the following corollary. 
\begin{cor}
\label{cor:induced-mvf}
Assume $S$ is an isolated invariant set of $\cV$. Then $\cV_S:=\setof{V\in\cV\mid V\subset S}$
is a multivector field on $S$. We call it the {\em induced} multivector field.
\qed
\end{cor}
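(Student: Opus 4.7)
The plan is to invoke Theorem~\ref{thm:lcl_vcomp_iis}, which characterizes $S$ as locally closed and $\cV$-compatible, and then to verify the two defining properties of a multivector field on $S$ from Subsection~\ref{subsec:mvf}: that $\cV_S$ is a partition of $S$, and that every $V\in\cV_S$ is locally closed in the subspace topology on $S$.

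First I would reconcile a small notational point. The corollary defines $\cV_S:=\setof{V\in\cV\mid V\subset S}$, whereas the convention introduced earlier in the paper writes $\cV_A=\setof{V\in\cV\mid V\cap A\neq\emptyset}$. For $\cV$-compatible $S$ these two collections coincide: $\cV$-compatibility gives $S=\bigcup\setof{V\in\cV\mid V\cap S\neq\emptyset}$, and since $\cV$ is itself a partition of $X$, any multivector meeting $S$ is forced to lie entirely inside $S$. With this identification, $\bigcup\cV_S=S$ is immediate, and the members of $\cV_S$ are nonempty and pairwise disjoint, inherited directly from the partition structure of $\cV$ on $X$. This settles the partition claim.

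For local closedness, I would pick an arbitrary $V\in\cV_S$. Since $V$ is locally closed in $X$ by the definition of a multivector field, I can write $V=U\cap C$ with $U$ open and $C$ closed in $X$. Because $V\subset S$, this yields $V=(U\cap S)\cap(C\cap S)$, an intersection of a relatively open and a relatively closed subset of $S$ in the subspace topology, so $V$ is locally closed as a subset of $S$.

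I do not anticipate any genuine obstacle: the substantive content is already packaged in Theorem~\ref{thm:lcl_vcomp_iis}, and once $\cV$-compatibility is available the rest is routine bookkeeping. The only care-point is the notational reconciliation of the two possible meanings of $\cV_S$, without which one might worry whether $\cV_S$ covers all of $S$ rather than just a proper subset.
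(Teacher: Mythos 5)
Your proof is correct and takes essentially the same route as the paper, which states the corollary as an immediate consequence of Theorem~\ref{thm:lcl_vcomp_iis}: $\cV$-compatibility of $S$ gives that $\cV_S$ is a partition of $S$, and local closedness of each multivector passes to the subspace topology. Your explicit reconciliation of the two meanings of $\cV_S$ and the verification of relative local closedness are exactly the bookkeeping the paper leaves implicit.
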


\begin{thm}
  \cite[Theorem 6.15]{LKMW2020}
  \label{thm:mvf-limit_set_iis}
  Let $\gamma\in\esol(X)$.
  Then both $\alpha(\gamma)$ and $\omega(\gamma)$ are non-empty, connected, isolated invariant sets.
  Moreover, sets $\alpha(\gamma)$ and $\omega(\gamma)$ are strongly connected with respect to $G_\cV$.
\end{thm}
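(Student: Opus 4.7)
The plan is to prove the statements for $\omega(\gamma)$; the argument for $\alpha(\gamma)$ is symmetric by time reversal. Set $A:=\bigcap_{t\in\ZZ^+}\gamma([t,\infty))$, so that $\omega(\gamma)=\supconvsetV{A}$. First, since $X$ is finite, the nested chain $\{\gamma([t,\infty))\}_{t\in\ZZ^+}$ of non-empty subsets of $X$ must stabilize at some $t_0$, giving $A=\gamma([t_0,\infty))\neq\emptyset$; a pigeonhole argument then shows that every $x\in A$ is visited by $\gamma$ at infinitely many positive times.

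Second, I would show that $A$ is strongly connected in $G_\cV$ with walks contained in $A$: given $x,y\in A$, pick $t_0\leq t_1<t_2$ with $\gamma(t_1)=x$ and $\gamma(t_2)=y$; then $\gamma(t_1),\gamma(t_1+1),\ldots,\gamma(t_2)$ is a walk in $G_\cV$ inside $A=\gamma([t_0,\infty))$, and swapping roles produces a return walk. Third, I would promote strong connectedness from $A$ to $\omega(\gamma)=\supconvsetV{A}$. The $\cV$-hull is built from $A$ by iterating $\cV$-saturation $B\mapsto\supsetV{B}$ and topological closure, and I would verify by induction that every intermediate set remains strongly connected in $G_\cV$. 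For a multivector co-member $v\in[w]_\cV$ both edges $v\leftrightarrow w$ lie in $\Pi_\cV$, and $\cV$-compatibility of $\omega(\gamma)$ keeps them inside. For a closure point $v\in\cl w$ the edge $w\to v$ is immediate from $\Pi_\cV(w)=\cl w\cup[w]_\cV$; the return walk is the delicate point, to be handled by passing through the multivector $[v]_\cV\subset\omega(\gamma)$ and chasing edges back via its closure.

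Fourth, the limit set $\omega(\gamma)$ is locally closed and $\cV$-compatible by construction, so by Theorem~\ref{thm:lcl_vcomp_iis} it will be an isolated invariant set as soon as I produce, for every $x\in\omega(\gamma)$, an essential solution through $x$ lying in $\omega(\gamma)$. Using step three, I would concatenate finite walks into a full solution through $x$ whose positive and negative iterates each visit every vertex of $\omega(\gamma)$ infinitely often; essentiality follows because $\omega(\gamma)$ is not contained in a single regular multivector, a property inherited from the assumed essentiality of $\gamma$. Connectedness then follows from strong connectivity in $G_\cV$ combined with the Alexandrov correspondence~\eqref{eq:poset_topology}: closure edges in a walk translate into comparability chains in the poset, and multivector co-membership edges stay in the locally closed, $\cV$-compatible set $\omega(\gamma)$, which forces any two of its points into the same connected component.

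The main obstacle I foresee is the third step, namely propagating strong connectedness across the closure operation: when $v\in\cl w\cap\omega(\gamma)$ and no element of $A$ sits in $[v]_\cV$, one must produce a return walk from $v$ to $A$ that does not leave $\omega(\gamma)$, and verifying uniformly that such walks exist is the technical heart of the theorem.
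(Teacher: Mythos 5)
The paper itself contains no proof of this statement: it is imported verbatim from \cite[Theorem 6.15]{LKMW2020}, so there is no internal argument to compare yours against. Judged on its own terms, your outline has one genuine gap, and it sits exactly where you put your warning flag. You describe the $\cV$-hull as obtained by iterating $\cV$-saturation with \emph{topological closure}. That is not what $\supconvsetV{A}$ is: in a finite $T_0$ space the smallest locally closed superset of a set $B$ is not $\cl B$ but the $\leq_\cT$-convex hull $\setof{y\mid \exists\, x,z\in B:\ x\leq_\cT y\leq_\cT z}$ (the paper records in Section~\ref{sec:top} that locally closed equals convex). Iterating saturation with the full closure overshoots --- it computes the smallest \emph{closed} $\cV$-compatible superset --- and, worse, the closure step genuinely destroys strong connectedness, so the ``return walk'' you defer cannot in general be produced. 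Concretely, let $X=\{a,b,ab\}$ be the face poset of a $1$-simplex with $\cV=\{\{a\},\{b\},\{ab\}\}$: the set $\{ab\}$ is strongly connected in $G_\cV$, but its closure $\{a,b,ab\}$ is not, since $\Pi_\cV(a)=\{a\}$ admits no walk from $a$ back to $ab$; and indeed $\supconvsetV{\{ab\}}=\{ab\}$, not the closure. So step three, which you correctly identify as the technical heart, does not close as written.

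The repair is short. Alternate $B\mapsto\supsetV{B}$ with the convexification $B\mapsto\setof{y\mid\exists\,x,z\in B:\ x\in\cl y,\ y\in\cl z}$; the stable limit of this iteration is exactly $\supconvsetV{A}$, and both operations preserve strong connectedness: for $v\in\supsetV{x}$ the edges $x\to v$ and $v\to x$ are both in $G_\cV$, and for a convex-hull point $y$ witnessed by $x,z$ already present, $y\in\cl z$ gives the edge $z\to y$ while $x\in\cl y$ gives the edge $y\to x$, so the return walk you were worried about comes for free. With that fixed, your steps one, two and four are essentially sound ($A=\gamma([t_0,\infty))$ stabilizes and is strongly connected by walk splicing; invariance follows by concatenating surjective cyclic walks; essentiality is inherited because $\omega(\gamma)$ is not contained in a single regular multivector). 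One residual caution: your deduction of topological connectedness from strong connectedness treats an edge $v\in\supsetV{w}$ as placing $v$ and $w$ in one component, but a multivector is only assumed locally closed, not connected, so that last claim needs an additional argument or should simply be delegated to the cited reference.
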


The following proposition follows immediately from the definition of a strongly connected set. 
\begin{prop}
\label{prop:strongly-connected-surjection}
  Assume $C\subset X$ is strongly connected with respect to $G_\cV$.
  Then, there exists a path $\rho:[0,n]_\mathbb{Z}\to C$ such that $\im\rho=C$ and $\rho(0)=\rho(n)$.
\end{prop}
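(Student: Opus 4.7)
The plan is to construct $\rho$ by concatenating walks that link a cyclic enumeration of the points of $C$. Since $X$ is a finite topological space, the strongly connected set $C$ is finite, so I would enumerate its elements as $C=\{x_0,x_1,\ldots,x_k\}$, with the $x_i$ pairwise distinct.

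By the strong connectedness of $C$ with respect to $G_\cV$, for each $i\in\{0,\ldots,k\}$ there is a walk in $G_\cV$, contained in $C$, from $x_i$ to $x_{(i+1)\bmod(k+1)}$. Since the edges of $G_\cV=G_{\Pi_\cV}$ are exactly the pairs $(x,y)$ with $y\in\Pi_\cV(x)$, such a walk is nothing but a finite solution $\gamma_i:[0,n_i]_\mathbb{Z}\to C$ of $\Pi_\cV$ satisfying $\gamma_i(0)=x_i$, $\gamma_i(n_i)=x_{(i+1)\bmod(k+1)}$, and $n_i\geq 1$. Setting $N_0:=0$, $N_{i+1}:=N_i+n_i$, and $n:=N_{k+1}$, I would define $\rho:[0,n]_\mathbb{Z}\to C$ by $\rho(t):=\gamma_i(t-N_i)$ for $t\in[N_i,N_{i+1}]_\mathbb{Z}$. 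At each junction $t=N_{i+1}$ the two candidate values agree, because $\gamma_i(n_i)=x_{(i+1)\bmod(k+1)}=\gamma_{i+1}(0)$, and within each block the successor condition $\rho(t+1)\in\Pi_\cV(\rho(t))$ is inherited from $\gamma_i$, so $\rho$ is a well-defined path in $C$.

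Finally, $\rho(0)=\gamma_0(0)=x_0$ and $\rho(n)=\gamma_k(n_k)=x_0$, while the inclusion $\im\rho\supseteq\{x_0,\ldots,x_k\}=C$ is built into the construction and the reverse inclusion follows from $\im\gamma_i\subseteq C$. I do not foresee any serious obstacle: this is essentially the elementary graph-theoretic fact that a finite strongly connected digraph admits a closed walk visiting every vertex, and the dynamical translation is immediate once walks in $G_\cV$ are identified with finite solutions of $\Pi_\cV$. The only subtlety worth recording is the consistency check at each junction, which is automatic by the choice of common endpoints.
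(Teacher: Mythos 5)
Your argument is correct and is exactly the elementary concatenation argument that the paper treats as immediate (it offers no proof, merely asserting the proposition "follows immediately from the definition of a strongly connected set"). The junction checks and the identification of walks in $G_\cV$ with finite solutions of $\Pi_\cV$ are handled properly, so nothing is missing.
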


\begin{prop}
\label{prop:limit_solution}
  Let $\gamma\in\esol(X)$.
  Then there exist $\rho,\rho'\in\esol(\alpha(\gamma))$ such that $\im\rho=\alpha(\gamma)$ and $\im\rho'=\omega(\gamma)$. 
\end{prop}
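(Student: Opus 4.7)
The plan is to realize each limit set as the image of a closed walk in the digraph $G_\cV$, then periodically extend that walk to a bi-infinite full solution and verify that it is essential with the correct limit sets.

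First, by Theorem~\ref{thm:mvf-limit_set_iis} the sets $\alpha(\gamma)$ and $\omega(\gamma)$ are non-empty and strongly connected with respect to $G_\cV$. Applying Proposition~\ref{prop:strongly-connected-surjection} to each, I obtain closed paths $\rho_0\colon [0,n]_\ZZ \to \alpha(\gamma)$ and $\rho_0'\colon [0,m]_\ZZ \to \omega(\gamma)$ with $\im\rho_0 = \alpha(\gamma)$, $\im\rho_0' = \omega(\gamma)$, $\rho_0(0)=\rho_0(n)$ and $\rho_0'(0)=\rho_0'(m)$. I then define $\rho\colon\ZZ\to X$ by $\rho(kn+t):=\rho_0(t)$ for $k\in\ZZ$ and $t\in[0,n)_\ZZ$, and analogously $\rho'$. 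The fact that $\rho_0$ closes up ensures the gluing respects the edge relation of $G_\cV$ at every seam $kn$, so $\rho$ and $\rho'$ are full solutions of $\Pi_\cV$.

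Next, I compute their limit sets. Periodicity implies that for every $t\in\ZZ$,
\[
\rho((-\infty,t]) \;=\; \im\rho_0 \;=\; \alpha(\gamma),
\]
and similarly $\rho([t,\infty)) = \alpha(\gamma)$; hence the intersections in \eqref{eq:mvf_limit_sets} both reduce to $\alpha(\gamma)$. By Theorem~\ref{thm:lcl_vcomp_iis}, $\alpha(\gamma)$ is locally closed and $\cV$-compatible, so $\supconvsetV{\alpha(\gamma)} = \alpha(\gamma)$. Therefore $\alpha(\rho) = \omega(\rho) = \alpha(\gamma)$, and an analogous argument yields $\alpha(\rho') = \omega(\rho') = \omega(\gamma)$. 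In particular $\im\rho \subset \alpha(\gamma)$ and $\im\rho' \subset \omega(\gamma)$, so both solutions lie in the respective limit sets.

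Finally, I verify essentiality. Since $\gamma$ itself is essential, neither $\alpha(\gamma)$ nor $\omega(\gamma)$ is contained in a single regular multivector. The limit sets of $\rho$ (respectively $\rho'$) coincide with $\alpha(\gamma)$ (respectively $\omega(\gamma)$), so the same non-inclusion persists, and therefore $\rho\in\esol(\alpha(\gamma))$ and $\rho'\in\esol(\omega(\gamma))$ as required. The only mildly delicate point is checking that the periodic concatenation at the seams yields a legitimate solution; this follows directly from $\rho_0(n)=\rho_0(0)$, which transfers the edge $(\rho_0(n-1),\rho_0(n))\in G_\cV$ into the required transition $(\rho(kn-1),\rho(kn))\in G_\cV$. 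Everything else is a direct application of the cited results.
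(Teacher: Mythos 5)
Your proof is correct and follows essentially the same route as the paper: strong connectedness of the limit sets (Theorem~\ref{thm:mvf-limit_set_iis}) plus Proposition~\ref{prop:strongly-connected-surjection} yield a closed surjective path, which is then periodically extended to a full solution whose limit sets coincide with the given one. The only (harmless) variation is in the essentiality step, where you invoke the essentiality of $\gamma$ directly, while the paper argues that the non-empty isolated invariant set $\alpha(\gamma)$ cannot be a single regular multivector; both are valid.
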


\begin{proof}
  By Theorem \ref{thm:mvf-limit_set_iis} set $\alpha(\gamma)$ is strongly connected with respect to $G_\cV$.
  Hence, by Proposition~\ref{prop:strongly-connected-surjection} we can construct a path $\theta:[0,n]_\mathbb{Z}\to C$ such that $\im\theta =\alpha(\gamma)$ and $\theta(0)=\theta(n)$.
  Define $\rho:=\ldots\cdot\theta\cdot\theta\cdot\theta\cdot\ldots$.
  Note that  $\rho$ is essential, because 
  otherwise $\alpha(\gamma)$ would consist of only a single regular multivector which contradicts the fact that $\alpha(\gamma)$ is an isolated invariant set.
  Moreover, one easily verifies that $\alpha(\rho)=\omega(\rho)=\im\rho=\alpha(\gamma)$.
  The argument for $\omega(\gamma)$ is analogous.
  \qed
\end{proof}

\subsection{Morse decompositions in classical and combinatorial dynamics}
\label{subsec:md}
We now recall the classical concept of Morse decomposition for flows and its counterpart for multivector fields. 
Later in the paper, after introducing Morse predecompositions, we will give different but equivalent
definitions of Morse decomposition, formulated in terms of Morse predecompositions. 

Consider an isolated invariant set $S$ of a flow $\varphi$ on a locally compact metric space $X$.
The following definition of Morse decomposition, in the formulation by J.~Reineck~\cite[Definition 1.3]{reineck:90a},
goes back to R.~Franzosa~\cite{Fr1986}.
\begin{defn}
\label{defn:flow-Morse-decomposition-orig}
An indexed family  $\cM=\{M_p\mid p\in\VSet\}$ of mutually disjoint isolated invariant subsets of $S$
indexed by a poset  $\VSet$ is  a {\em Morse decomposition} of $S$ if for every $x \in S$ 
either $x \in M_p$ for some $p\in\VSet$ or there exist $p,q\in\VSet$ 
satisfying $p> q$, $\alpha(x)\subset M_p$ and $\omega(x)\subset M_q$.
\end{defn}

Consider now an isolated invariant set $S$ of a combinatorial multivector field $\cV$ on a finite topological space $X$.
The following definition of Morse decomposition for multivector fields was introduced in~\cite[Definition 7.1]{LKMW2020} 
\begin{defn}
\label{defn:mvf-Morse-decomposition-orig}
An indexed family  $\cM=\{M_p\mid p\in\VSet\}$ of mutually disjoint isolated invariant subsets of $S$
indexed by a poset  $\VSet$ is  a {\em Morse decomposition} of $S$ 
if for every essential solution $\gamma$ in $X$ 
either $\im\gamma\subset M_p$ for some $p\in\VSet$ or 
there exist $p,q\in\VSet$ 
satisfying $p> q$, $\alpha(\gamma)\subset M_p$ and $\omega(\gamma)\subset M_q$.
\end{defn}

As pointed out in~\cite{KMV2022}, Morse decomposition is an extremely general tool, but this generality has led to subtle variances
in its definition. Some approaches exclude the empty set as a Morse set, some allow for many empty Morse sets.
The latter approach requires a properly set up definition but has many benefits, 
in particular in the context of the stability of various invariants associated with Morse decomposition.
In~\cite{KMV2022}, the issue is resolved by differentiating Morse representation from Morse decomposition. 
This paper is based on the original definition by Franzosa~ \cite{Fr1984} in which Morse decomposition is considered as a properly understood 
indexed family. An indexed family also allows for many empty Morse sets, as pointed out in~ Section~\ref{sec:sets-families-maps}.

\section{Morse predecomposition in combinatorial dynamics}
\label{sec:pred-mvf}

In this section we introduce Morse predecompositions in the setting of combinatorial multivector fields.
We assume in this section that $\cV$ is a multivector field on a finite topological space $X$ and $X$ is invariant 
with respect to $\cV$. Such an assumption is not restrictive, because if $X$ is not invariant, we can replace $X$
by its invariant part and $\cV$ by its restriction to the maximal invariant subset. 

\subsection{Combinatorial Morse predecomposition}

In order to generalize the concept of Morse decomposition we first introduce the following definition classifying full solutions
which keep visiting isolated invariant set $S_1$ backward in time and isolated invariant set $S_2$ forward in time.

\begin{defn}
\label{defn:mvf-links}
Consider invariant sets $S_1$, $S_2$ of a multivector field $\cV$ in $X$. 
A full solution $\gamma:\ZZ\rightarrow X$ is a {\em link from $S_1$ to $S_2$} if $\alpha(\gamma)\cap S_1\neq\emptyset\neq\omega(\gamma)\cap S_2$. 
Link $\gamma$ is {\em homoclinic} if $S_1=S_2$. Otherwise it is {\em heteroclinic}.
A homoclinic link $\gamma$  is {\em trivial} if $\im\gamma\subset S_1=S_2$. 
Otherwise it is {\em non-trivial}.
\end{defn}

\begin{prop}
  \label{prop:ess_path_extension}
  Let $S_1,S_2\subset X$ be isolated invariant sets.
    There exists a link $\gamma$ from $S_1$ to $S_2$
    if and only if 
    there exists a path $\rho:[a,b]\rightarrow X$ such that $\rho(a)\in S_1$ and $\rho(b)\in S_2$.
\end{prop}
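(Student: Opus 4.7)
The statement is an equivalence, so I would handle the two directions separately, starting with the easier one.

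For the implication from a path to a link ($\Leftarrow$), assume $\rho:[a,b]\to X$ satisfies $\rho(a)\in S_1$ and $\rho(b)\in S_2$. Since $S_1$ is an isolated invariant set, $\inv S_1 = S_1$, and hence there is an essential solution $\sigma_1\in\esol(\rho(a),S_1)$ through $\rho(a)$ that stays in $S_1$; symmetrically pick $\sigma_2\in\esol(\rho(b),S_2)$. I would then form a full solution $\gamma$ by concatenating the backward tail $\sigma_1^-$, the path $\rho$, and the forward tail $\sigma_2^+$. The composition is well-posed because $\rho(a+1)\in\Pi_\cV(\rho(a))=\Pi_\cV(\sigma_1(0))$ and $\sigma_2(1)\in\Pi_\cV(\sigma_2(0))=\Pi_\cV(\rho(b))$. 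The backward tail of $\gamma$ agrees (up to a shift) with $\sigma_1^-$, so $\alpha(\gamma)=\alpha(\sigma_1)$. By Theorem~\ref{thm:mvf-limit_set_iis} this set is nonempty, and because $S_1$ is locally closed and $\cV$-compatible (Theorem~\ref{thm:lcl_vcomp_iis}) while $\im\sigma_1\subseteq S_1$, the $\cV$-hull used in \eqref{eq:mvf_limit_sets} keeps $\alpha(\sigma_1)$ inside $S_1$. Thus $\emptyset\neq\alpha(\gamma)\subseteq S_1$ and, symmetrically, $\emptyset\neq\omega(\gamma)\subseteq S_2$, so $\gamma$ is a link from $S_1$ to $S_2$.

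For the implication from a link to a path ($\Rightarrow$), fix $x_1\in\alpha(\gamma)\cap S_1$ and $x_2\in\omega(\gamma)\cap S_2$. The difficulty is that $x_1$ and $x_2$ need not lie on $\im\gamma$, because definition~\eqref{eq:mvf_limit_sets} wraps the intersection of backward/forward tails in a $\cV$-hull. To bridge this gap I would exploit the finiteness of $X$: the decreasing chain of nonempty finite sets $\{\gamma((-\infty,t])\}_{t\in\ZZ^-}$ stabilizes, so its intersection is nonempty, and any element $y_1$ of it lies in $\alpha(\gamma)$ and is attained by $\gamma$ at arbitrarily negative times. Analogously pick $y_2\in\omega(\gamma)$ attained at arbitrarily large positive times. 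By Theorem~\ref{thm:mvf-limit_set_iis} the sets $\alpha(\gamma)$ and $\omega(\gamma)$ are strongly connected, so by the very definition of strong connectedness (Section~\ref{sec:digraphs}) there is a path $\rho_1$ inside $\alpha(\gamma)$ from $x_1$ to $y_1$ and a path $\rho_2$ inside $\omega(\gamma)$ from $y_2$ to $x_2$. Finally choose $t_1\le t_2$ with $\gamma(t_1)=y_1$ and $\gamma(t_2)=y_2$ and concatenate $\rho_1$, $\gamma|_{[t_1,t_2]}$, and $\rho_2$ in that time order to obtain the desired path from $x_1\in S_1$ to $x_2\in S_2$.

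The main obstacle is precisely the $\cV$-hull operation in \eqref{eq:mvf_limit_sets}: the chosen intersection point $x_1\in\alpha(\gamma)\cap S_1$ need not itself be visited by $\gamma$, so it cannot be taken as the head of a path produced by merely truncating $\gamma$. The strong connectedness of $\alpha(\gamma)$ and $\omega(\gamma)$ supplied by Theorem~\ref{thm:mvf-limit_set_iis} is the decisive ingredient that lets me connect $x_i$ to a genuinely visited point of $\gamma$ by a path inside the limit set. A minor bookkeeping point is that the composition defined in Section~\ref{subsec:cds} writes its later-in-time factor on the left, so care is required when ordering the three pieces of the final concatenation, but this is routine.
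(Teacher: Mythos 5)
Your proof is correct and follows essentially the same route as the paper's: extend the path by essential solutions in $S_1$ and $S_2$ for one direction, and for the other use the strong connectedness of $\alpha(\gamma)$ and $\omega(\gamma)$ (Theorem~\ref{thm:mvf-limit_set_iis}) to join $x_i$ to points of $\im\gamma$ lying in the limit sets and then splice in a segment of $\gamma$. You merely make explicit two details the paper leaves implicit, namely that $\bigcap_{t}\gamma((-\infty,t])$ stabilizes to a nonempty set of points visited at arbitrarily negative times and that the splicing times can be chosen with $t_1\le t_2$.
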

\begin{proof}
    Let $\gamma$ be a link from $S_1$ to $S_2$.
    Let $a,b\in\ZZ$ such that $\gamma(a)\in\alpha(\gamma)$ and $\gamma(b)\in\omega(\gamma)$.
    By Theorem \ref{thm:mvf-limit_set_iis} for any $x_1\in S_1\cap\alpha(\gamma)$ there exist a path $\rho_1$ from $x_1$ to $\gamma(a)$.
    Similarly we can find $x_2\in S_2\cap\omega(\gamma)$ a path $\rho_2$ from $\gamma(b)$ to $x_2$.
    Hence, $\rho:=\rho_1\cdot\gamma_{[a,b]}\cdot\rho_2$ is a path from $S_1$ to $S_2$.
    
    To prove the opposite implication consider a path $\rho:[a,b]\rightarrow X$ 
    such that $\rho(a)\in S_1$ and $\rho(b)\in S_2$. Take $\varphi_1 \in \esol(\rho(a), S_1)$ and $\varphi_2 \in \esol(\rho(b), S_2)$
    and define essential solution $\gamma:=\varphi_1^-\cdot\rho\cdot\varphi_2^+$.
    We get $\alpha(\gamma)=\alpha(\varphi_1)\subset S_1$ and $\omega(\gamma)=\omega(\varphi_2)\subset S_2$.
    Hence, $\gamma$ is a link from $S_1$ to $S_2$.
  \qed
\end{proof}

\begin{defn}
  \label{def:mvf-mpd}
  An indexed  family $\cM = \setof{M_p\mid p\in\VSet}$ of mutually disjoint isolated invariant subsets in $X$
  is a \emph{Morse predecomposition} of $X$ if every essential solution $\gamma$ in $X$ is a link from $M_p$ to $M_q$
  for some $p,q\in\VSet$.
  We refer to the sets in $\cM$ as \emph{pre-Morse sets}.
  We also define a Morse predecomposition of an isolated invariant set $S\subset X$ as a Morse predecomposition 
  with respect to the induced multivector field $\cV_S$.
\end{defn}

\begin{prop}
    Let $\cM$ be a Morse predecomposition of $X$.
    Then, for every non-empty isolated invariant set $S$ we have $S\cap\bigcup\cM\neq\emptyset$.
    In particular, if $V\in\cV$ is a critical multivector then $V\subset\bigcup\cM$.
\end{prop}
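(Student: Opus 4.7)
The plan is to produce an essential solution inside the given invariant set and then invoke the definition of a Morse predecomposition. First, because $S$ is non-empty and invariant, for any chosen $x \in S$ one has $\esol(x, S) \neq \emptyset$, so there is an essential solution $\gamma$ with $\gamma(0) = x$ and $\im \gamma \subset S$. Since essentiality is an intrinsic property of $\gamma$ (it depends only on $\alpha(\gamma)$ and $\omega(\gamma)$, not on the ambient space), $\gamma$ is also essential as a solution in $X$. Thus the Morse predecomposition hypothesis yields $p, q \in \VSet$ with $\alpha(\gamma) \cap M_p \neq \emptyset$ and $\omega(\gamma) \cap M_q \neq \emptyset$.

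Next I would verify that $\alpha(\gamma) \subset S$. The intersection $\bigcap_{t \in \mathbb{Z}^-} \gamma((-\infty, t])$ lies inside $\im \gamma \subset S$; by Theorem~\ref{thm:lcl_vcomp_iis} the set $S$ is locally closed and $\cV$-compatible, and $\alpha(\gamma)$ is by definition the smallest such set containing that intersection, so $\alpha(\gamma) \subset S$. Combined with the previous step this gives $\emptyset \neq \alpha(\gamma) \cap M_p \subset S \cap M_p$, which proves the first claim.

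For the statement on a critical multivector $V$, the plan has two steps. First, I would show that $V$ is itself a non-empty isolated invariant set. It is non-empty because nontriviality of $H(\cl V, \mo V)$ forces $V \neq \emptyset$; it is locally closed by the definition of a multivector field and trivially $\cV$-compatible. For invariance, the constant solution $\gamma \equiv x$ through any $x \in V$ is a solution (as $x \in V \subset \Pi_\cV(x)$), and its limit sets both equal $\supconvsetV{\{x\}}$. A short check, using that any $\cV$-compatible set containing $x$ must contain the whole multivector $V$ and that $V$ itself is locally closed and $\cV$-compatible, identifies this hull with $V$. Since $V$ is critical it is not contained in any regular multivector, so $\gamma$ is essential. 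By Theorem~\ref{thm:lcl_vcomp_iis}, $V$ is therefore an isolated invariant set. Second, applying the first half of the proposition with $S := V$ yields some $M_p \in \cM$ meeting $V$; the $\cV$-compatibility of $M_p$ (again by Theorem~\ref{thm:lcl_vcomp_iis}) promotes this non-empty intersection to the inclusion $V \subset M_p \subset \bigcup \cM$.

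The only mildly subtle point is the claim that the constant solution at a point of a critical multivector is essential. Once the $\cV$-hull of a singleton inside a multivector is identified with the multivector itself, essentiality follows immediately from criticality; the remainder of the argument is a direct application of the local-closedness and $\cV$-compatibility characterization of isolated invariant sets.
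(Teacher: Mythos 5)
Your argument for the first claim is correct and essentially the paper's own proof (the paper phrases it as a contradiction, you argue directly): take an essential solution in $S$, note $\alpha(\gamma)\subset S$ because $S$ is locally closed and $\cV$-compatible by Theorem~\ref{thm:lcl_vcomp_iis}, and apply the definition of Morse predecomposition. The paper gives no explicit proof of the ``in particular'' part, but your argument for it --- the constant solution at $x\in V$ has $\alpha=\omega=\supconvsetV{\{x\}}=V$, hence is essential by criticality, so $V$ is a non-empty isolated invariant set whose non-empty intersection with some $M_p$ upgrades to $V\subset M_p$ by $\cV$-compatibility --- is complete and correct.
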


\begin{proof}
    Assume to the contrary that $S$ is a non-empty isolated invariant set such that $S\cap\bigcup\cM=\emptyset$.
    Since $S$ is non-empty, we can find a $\varphi\in\esol(S)$.
    Clearly, $\alpha(\varphi)\subset S$ and $\omega(\varphi)\subset S$ because of Theorem~\ref{thm:lcl_vcomp_iis} and \eqref{eq:mvf_limit_sets}.
    Thus, $\varphi$ is an essential solution while not being a link which contradicts the assumption that $\cM$ is a Morse predecomposition.
    \qed
\end{proof}

Links of a system forms a relation $L$ on an indexing set $\VSet$ of a Morse predecomposition $\cM$ defined as
\[
    L:=\{(p,q)\in \VSet\times\VSet \mid \text{there exists a non-trivial link from $M_p$ to $M_q$}  \}.
\]
The digraph $G_\cM:=(\VSet, L)$ is referred to as \emph{the digraph induced by Morse predecom\-posi\-tion $\cM$}.

We say that a preorder $\leq$ in $\VSet$ is {\em admissible} if the existence of a link from $M_p$ to $M_q$ implies $q\leq p$.
Clearly, the preorder induced by $G_\cM$ is the minimal admissible preorder.
We denote it by  $\leq_\cM$ and we call it the {\em flow induced preorder}.

A \emph{Conley model of $\cM$} is a digraph $C=(N, E)$ 
with a map $\nu_{G_\mathcal{M},C}: \VSet \rightarrow N$
such that it preserves the graph induced preorders.
Clearly, $G_\cM$ itself is a Conley model.
We call $G_\cM$ the \emph{flow-defined Conley model of \cM}.

\subsection{Combinatorial Morse decomposition}
In this section we present a definition of Morse decomposition formulated as a special case of Morse predecomposition. 
We later prove (see Theorem~\ref{thm:mvf-morse-decomposition}) that this definition is equivalent to Definition~\ref{defn:mvf-Morse-decomposition-orig}.
We begin with the definition of a connection, a special case of a link. 
\begin{defn}
\label{defn:mvf-connections}
Given a Morse predecomposition $\cM$, a link $\gamma$ from $M_p$ to $M_q$ is a {\em connection} if $\im\gamma\cap\bigcup\cM\subset M_p\cup M_q$
and $\alpha(\gamma)\subset M_p$, $\omega(\gamma)\subset M_q$.
Clearly, a trivial link is a homoclinic connection. However, note that not every homoclinic connection is a trivial link. 
\end{defn}

We say that an invariant subset $T$ of an invariant set $S$ is {\em saturated in $S$} if 
every homoclinic connection $\gamma$ from $T$ to $T$ in $S$ is trivial.
We say that a Morse predecomposition $\cM$ is {\em saturated} if every $M\in\cM$ is saturated. 

\begin{defn}
\label{defn:mvf-Morse-decomposition}
A Morse predecomposition  $\cM=\{M_p\mid p\in\VSet\}$ is called a {\em Morse decomposition} if it is saturated
and among admissible preorders there is a partial order. 
Note that then, by Proposition~\ref{prop:po-extension}, the flow induced preorder is a partial order. 
\end{defn}

\begin{ex}
\label{ex:mvf-1}
Consider a simplicial complex $K$ in Figure \ref{fig:mvf-example}.
Let $\cV$ be a multivector field on $K$ consisting of seven multivectors:
\begin{gather*}
  V_1:=\{A,AB,AD\},\quad
  V_2:=\{B,BC\},\quad
  V_3:=\{C,AC, CE\},\\
  V_4:=\{ACD\},\quad
  V_5:=\{D,CD\},\quad
  V_6:=\{BCE\},\\
  V_7:=\{E, BE, DE\}.
\end{gather*}
Multivectors $V_2$ and $V_5$ are regular, all others are critical.
The finest possible Morse decomposition consists of three isolated invariant sets $M_1=V_4$, $M_2=V_6$, and $M_3=K\setminus V_4\cup V_6$.
The Hasse diagram of the admissible partial order coincides with the $\cM$ induced digraph and is presented in Figure~\ref{fig:mvf-example} (top right).
On the other hand, Morse predecomposition allows us to treat every critical multivector contained in $M_3$ as a separate pre-Morse set.
Hence, we get a Morse predecomposition $\cM$ consisting of five sets: $M_1=V_4$, $M_2=V_6$, $M_3^A=V_1$, $M_3^C=V_3$, and $M_3^E=V_7$.
The induced digraph is presented in Figure~\ref{fig:mvf-example}(bottom right).
\exend
\end{ex}

\begin{figure}
  \centering
  \includegraphics[width=0.5\textwidth]{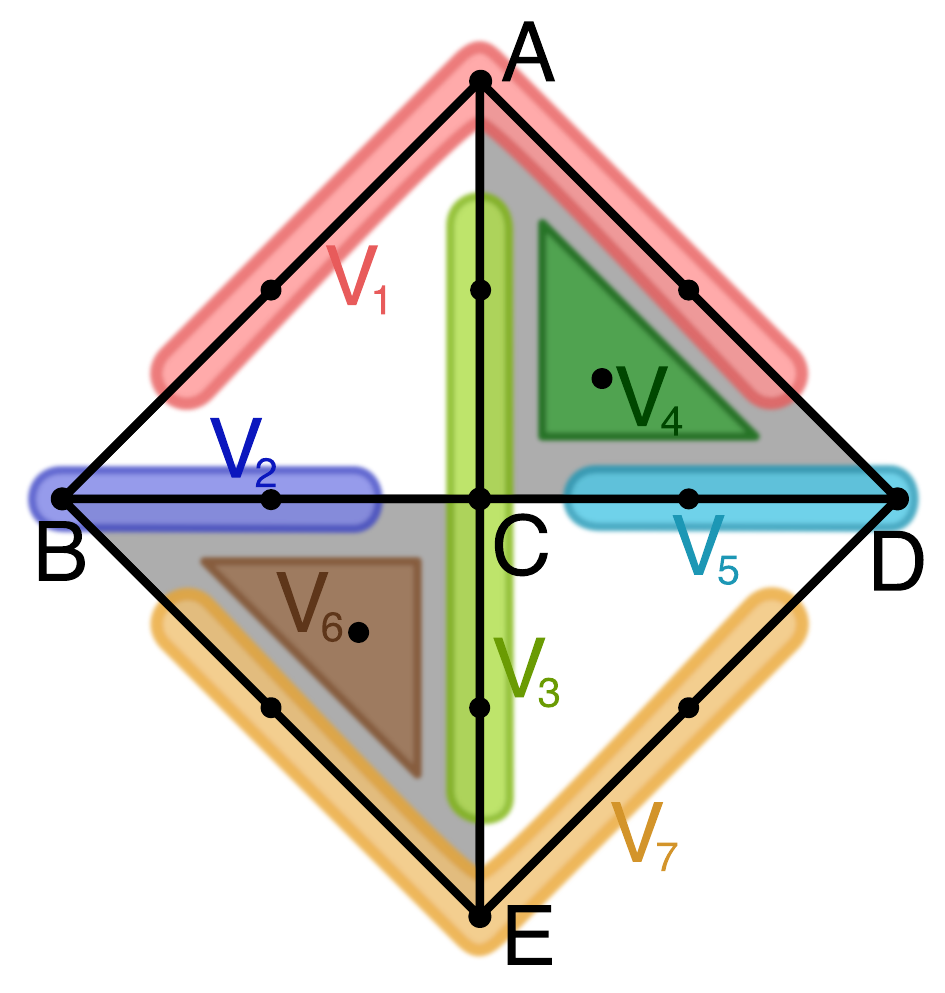}
  \hspace{0.01cm }
    \raisebox{0.05\height}{
        \includegraphics[width=0.4\textwidth]{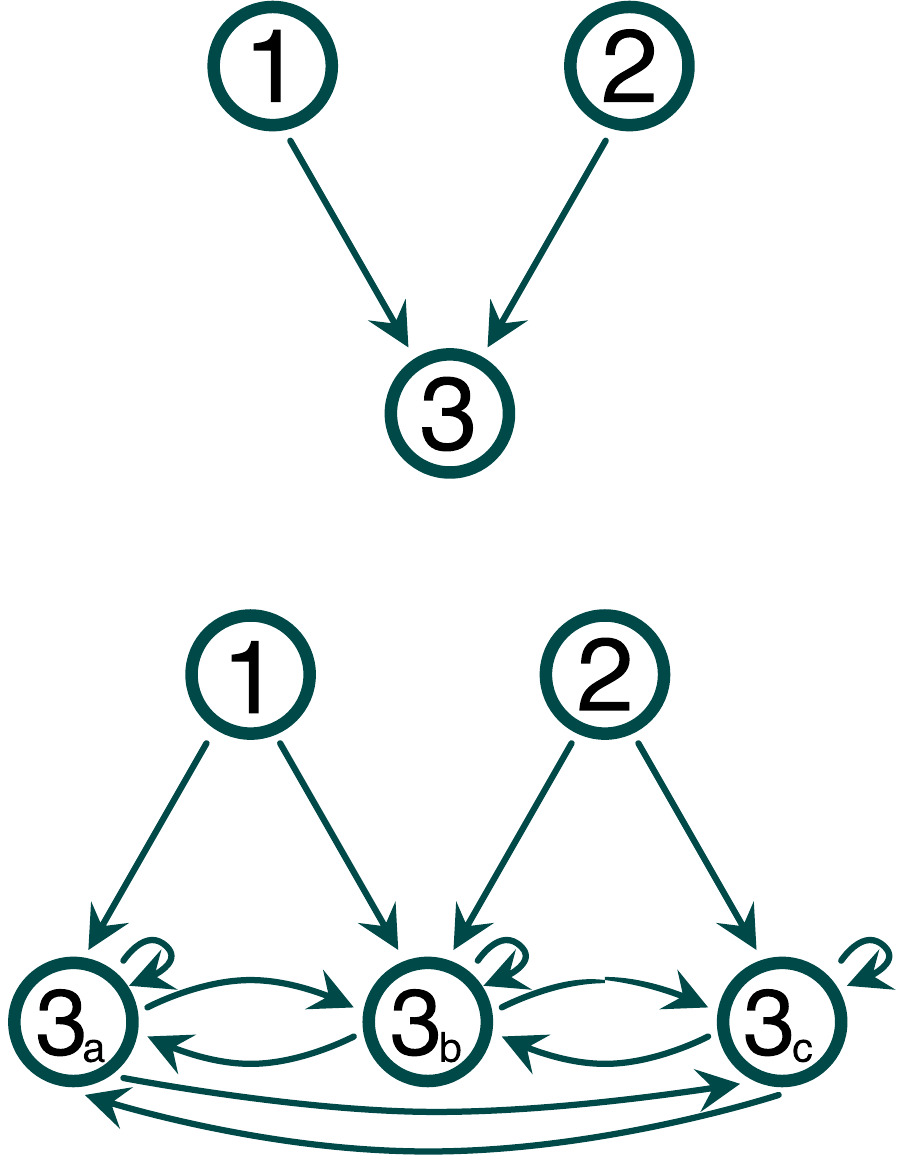}
        
    }
  \caption{
    Left: A simplicial complex $K$ with a multivector field  consisting of two regular multivectors, $V_2$ and $V_5$, 
    and five critical multivectors, $V_1$, $V_3$, $V_4$, $V_6$, $V_7$.
    Top right: the digraph induced by Morse decomposition $\{M_1,M_2,M_3\}$ 
    where $M_1:=V_4$, $M_2:=V_6$, and $M_3=:K\setminus V_4 \setminus V_6$. 
    Bottom right: the digraph induced by Morse predecomposition $\cM$ consisting of: $M_1=V_4$, $M_2=V_6$, $M_3^A=V_1$, $M_3^C=V_3$, and $M_3^E=V_7$.
  }
  \label{fig:mvf-example}
\end{figure}

However, the following example shows that such a minimal Morse predecomposition need not be unique.

\begin{figure}
  \includegraphics[width=0.5\textwidth]{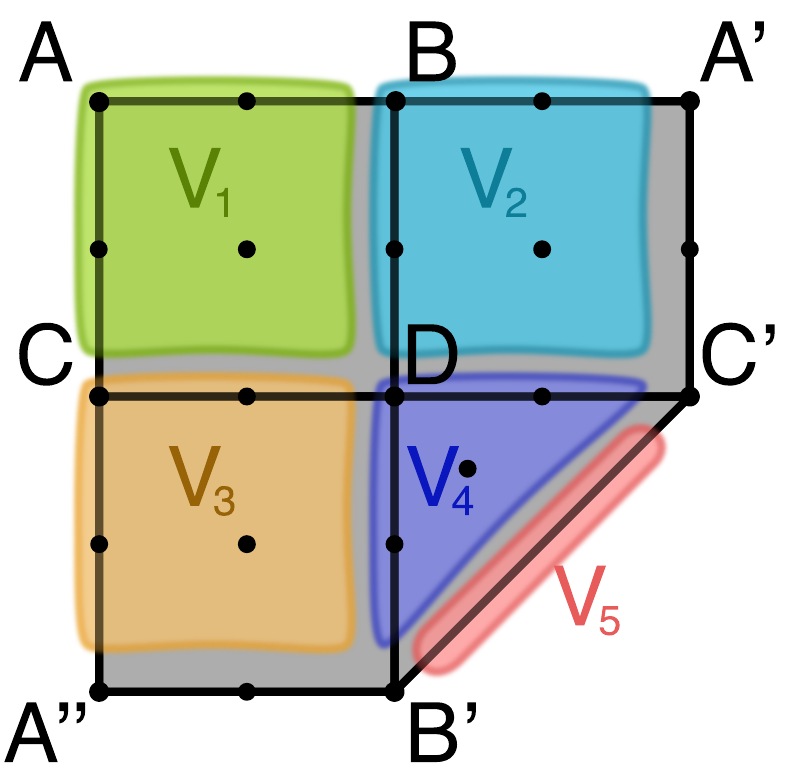}
  \caption{
    A multivector field on a cellular complex in which 
    we assume that edge $AC$ is identified with edge $A'C'$ and edge $AB$ is identified with edge $A'B'$
    The multivector field consists of four regular multivectors, $V_1$, $V_2$, $V_3$, $V_4$, and  critical multivector $V_5$.
  }
  \label{fig:mvf-example2}
\end{figure}

\begin{ex}
\label{ex:mvf-2}
Consider a cellular complex $L$ in Figure \ref{fig:mvf-example2} where
  we assume that edge $AC$ is identified with edge $A'C'$ and edge $AB$ with edge $A'B'$.
Consider a multivector field $\cV$ on $L$ consisting of five multivectors:
\begin{gather*}
  V_1:=\{A,AB,AC,ABCD\},\quad
  V_2:=\{B,A'B,BD,BA'DC'\},\\
  V_3:=\{C,CD,CA'',CDA''B'\},\quad
  V_4:=\{D,DC',DB',DC'B'\},\\
  V_5:=\{B'C'\}.
\end{gather*}
Note that the only critical multivector is $V_5$.
Multivector field $\cV$ on $L$ is recurrent in the sense that for any two cells in $L$ we can construct a path connecting them. 
Thus, the only possible Morse decomposition of $\cV$ is the trivial one, with $L$ as the only Morse set.
However, $\cV$ admits two non-trivial Morse predecompositions.
Let $M_1:=V_1\cup V_2$, $M_2:=V_1\cup V_3$, and $M_3:=V_5$.
Observe that $M_{1}$ is a non-empty isolated invariant set, because for every point in $M_{1}$ we can find a periodic solution, for instance, $\ldots, A, AB, B, BA', A,\ldots$. 
The same observation applies to $M_2$. 
Set $M_3$ is an isolated invariant set as a critical multivector. 
One can verify that $\cM_1:=\{M_1, M_3\}$ and $\cM_2:=\{M_2,M_3\}$ are both non-trivial Morse predecompositions of $S$.
We note that the digraph induced by $\cM_1$ as well as $\cM_2$ is a clique.
Clearly, none of the Morse predecompositions is a refinement of the other.
\exend
\end{ex}

\begin{figure}
\begin{center}
    \includegraphics[width=0.74\textwidth]{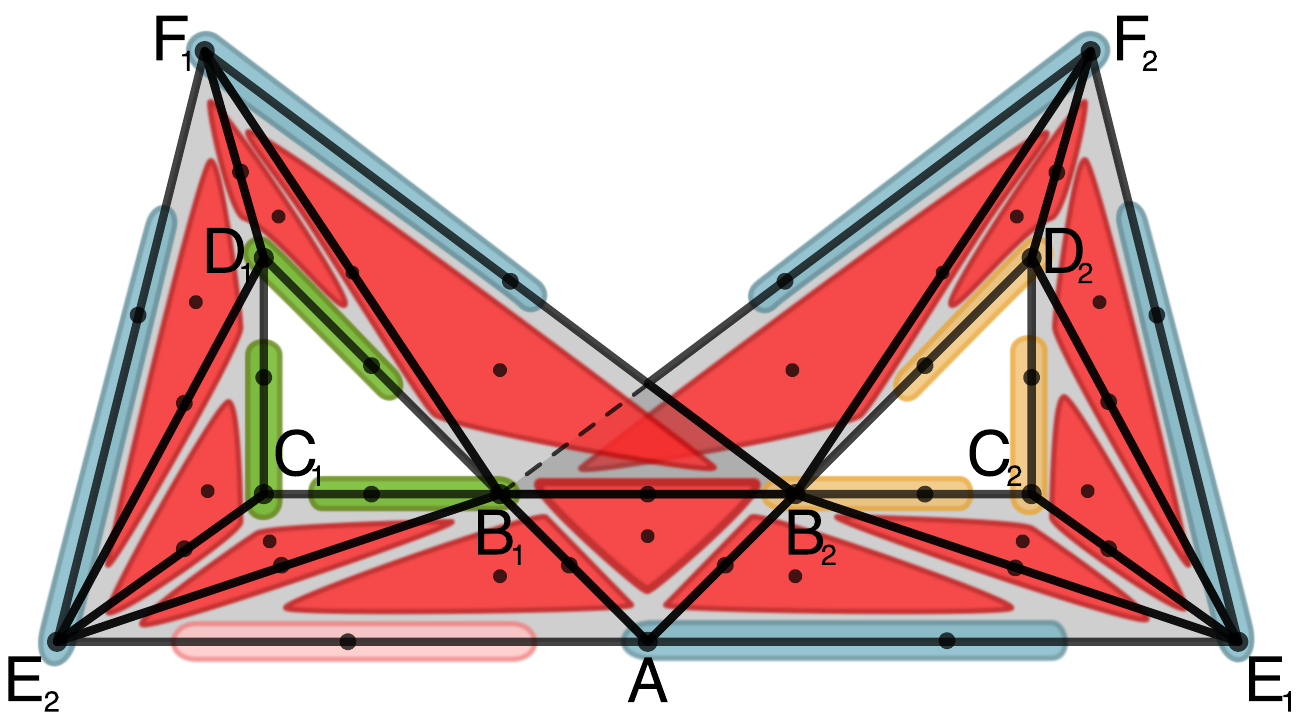}
    \raisebox{5mm}{
        \includegraphics[width=0.23\textwidth]{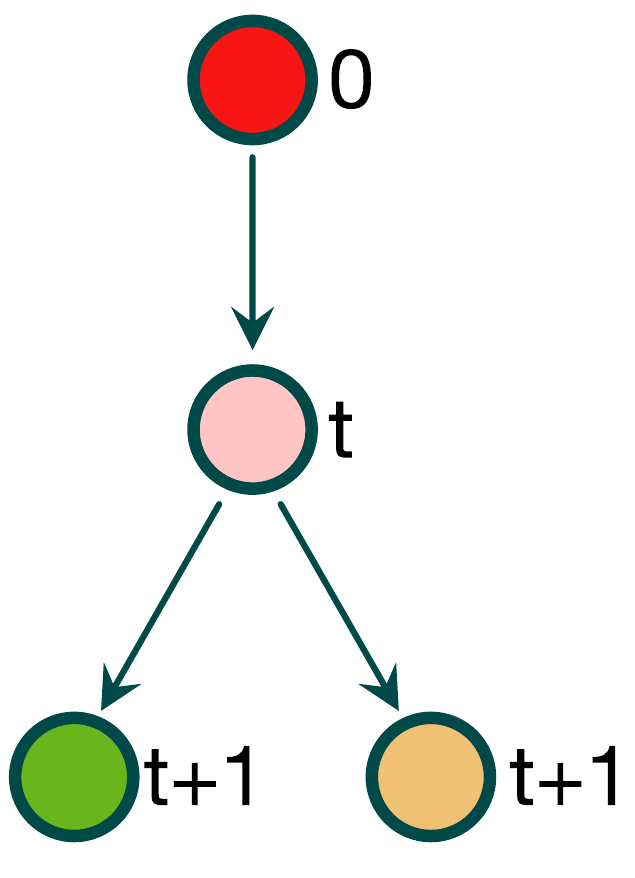}
    }
\end{center}
    \caption{
      Left: A combinatorial model of Lorenz type dynamics with a Morse decomposition  consisting of four Morse sets: a repeller marked with red multivectors, 
      a saddle marked with a pink multivector and two attracting periodic orbits marked respectively with green and yellow multivectors.
      Right: The associated Conley-Morse digraph. 
    }
    \label{fig:lorenz-1}
\end{figure}
The following example presents a combinatorial version of the Williams model~ \cite{Wi1979} of the Lorenz attractor. A similar combinatorial model was introduced in~\cite[Figure 3]{KMW2016} and studied in~ \cite[Figure 4]{MWS2022}.
\begin{ex}
\label{ex:lorenz}
Consider the combinatorial multivector field presented in Figure~\ref{fig:lorenz-1}(left). 
It has exactly one critical multivector, a singleton edge $AE_2$ at bottom left.
All other multivectors are doubletons.
The minimal Morse decomposition consists of four Morse sets: a repeller marked with red multivectors, 
a saddle marked with a pink multivector and two attracting periodic orbits marked respectively with green and yellow multivectors.
The associated Conley-Morse digraph is presented in Figure~\ref{fig:lorenz-1}(right).
In particular, the Conley index of the repeller is zero. 
Such a Conley index says nothing about the invariant set inside. 
However, the repeller contains two repelling periodic orbits sharing the middle bottom triangle $AB_1B_2$. 
Each of them is an isolated invariant set with the cells of the other on a non-trivial homoclinic connection.
It is straightforward to verify that one can obtain a Morse predecomposition 
by replacing the repeller in the Morse decomposition by one of the repelling orbits. 
The resulting two minimal Morse predecompositions together with the Conley-Morse graphs are presented in Figure~\ref{fig:lorenz-2}.
\exend
\end{ex}

\begin{figure}
\begin{center}
    \includegraphics[width=0.74\textwidth]{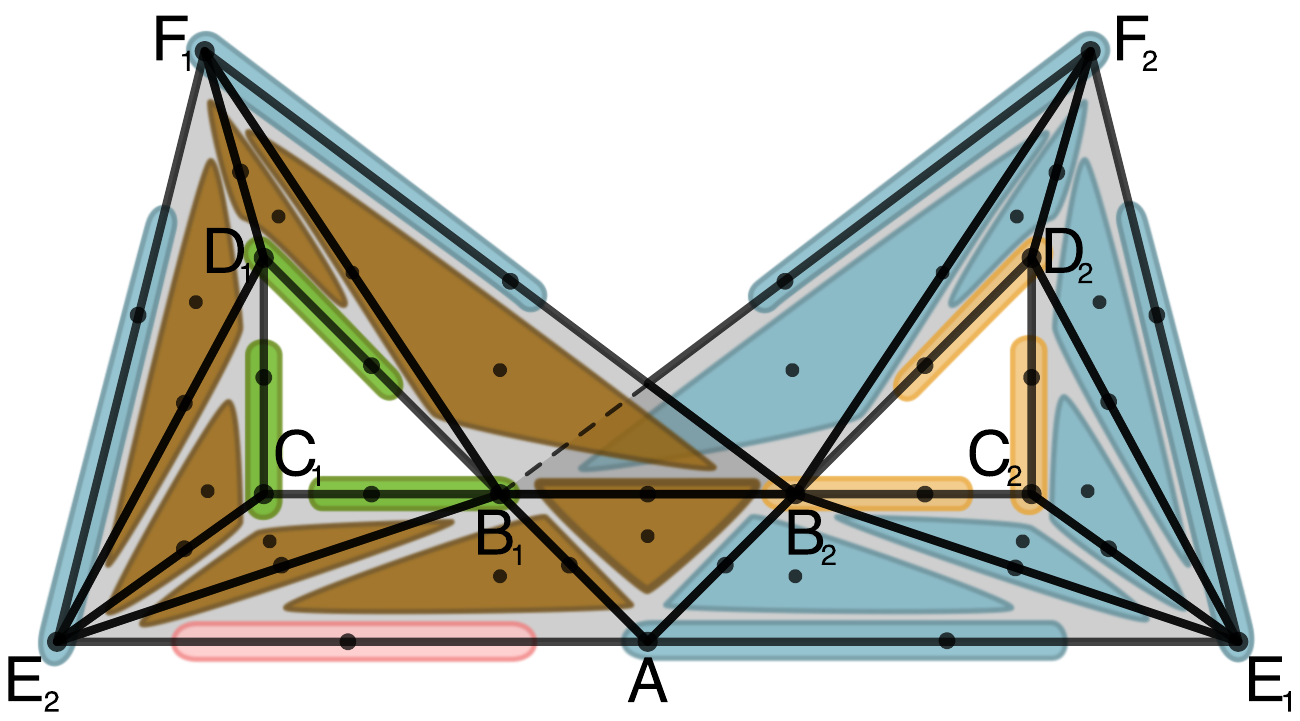}
    \raisebox{5mm}{
        \includegraphics[width=0.23\textwidth]{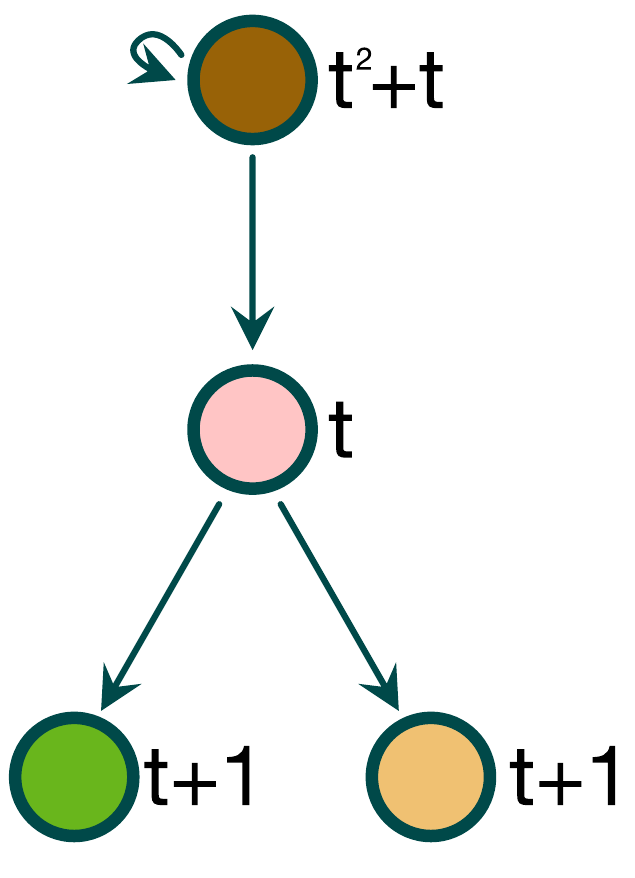}
    }
    
    \includegraphics[width=0.74\textwidth]{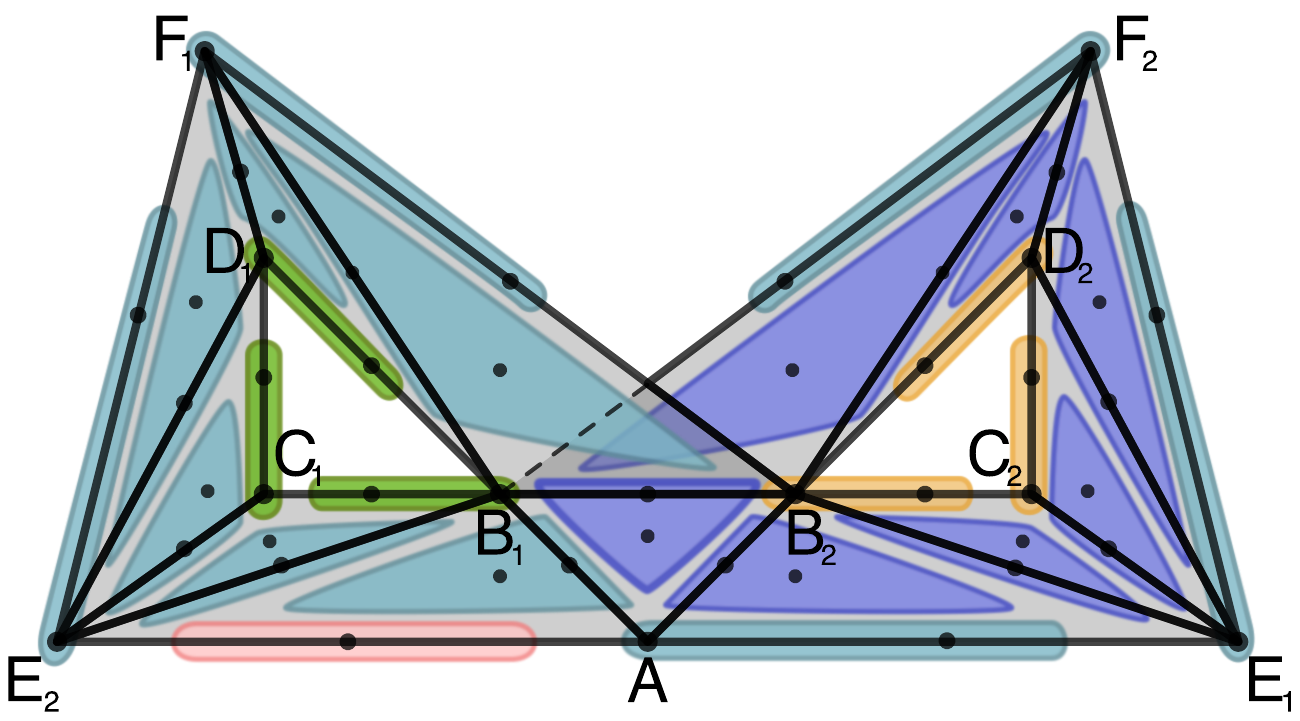}
    \raisebox{5mm}{
        \includegraphics[width=0.23\textwidth]{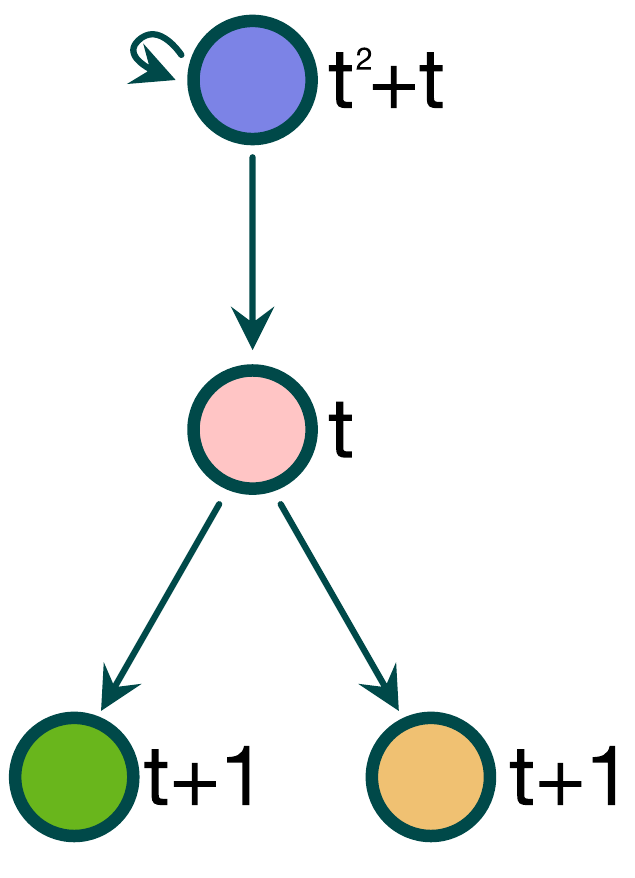}
    }
\end{center}
    \caption{
      Top left: A Morse predecomposition with Conley-Morse digraph consisting of four pre-Morse sets: a repelling periodic orbit on the left marked with brown multivectors, 
      a saddle marked with a pink multivector and two attracting periodic orbits marked respectively with green and yellow multivectors.
      Bottom left: A Morse predecomposition with Conley-Morse digraph symmetric to the top one with the brown repelling orbit replaced by the violet repelling orbit on the right. 
      Top and bottom right: The digraphs induced by the Morse predecompositions on the left.      
    }
    \label{fig:lorenz-2}
\end{figure}

Given a Morse predecomposition  $\cM=\{M_p\mid p\in\VSet\}$ of $X$ and  
$C\subset X$ define
\begin{equation}
    \VSet_C = \VSet_C(\cM):=\setof{p\in\VSet\mid M_p\cap C\neq\emptyset}.
\end{equation}

\begin{lem}
\label{lem:mvf-minimal-element}
Let $\cM$ be a Morse predecomposition of  $X$
and let $C\subset X$ be an isolated invariant subset of $X$ which is strongly connected in $G_\cV$.
If there is an $r\in\PP_C(\cM)$ such that $M_r$ is saturated in $X$ then $C\subset M_r$. 
In particular, $\VSet_C=\{r\}$. 
\end{lem}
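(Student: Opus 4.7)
The plan is a proof by contradiction: assuming $C\not\subset M_r$, I construct a non-trivial homoclinic connection from $M_r$ to $M_r$ in $X$, contradicting the saturation of $M_r$.

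To build the offending solution, first invoke Proposition~\ref{prop:strongly-connected-surjection} on the strongly connected set $C$ to obtain a path $\rho\colon[0,n]_\ZZ\to C$ with $\im\rho=C$ and $\rho(0)=\rho(n)$. Because this loop already visits every point of $C$, a cyclic reparameterization lets me arrange that $x:=\rho(0)=\rho(n)$ lies in $M_r\cap C$, a non-empty set since $r\in\PP_C$. Extending $\rho$ periodically to $\gamma\colon\ZZ\to X$ via $\gamma(t):=\rho(t\bmod n)$ produces a full solution with $\im\gamma=C$. As $C$ is an isolated invariant set, Theorem~\ref{thm:lcl_vcomp_iis} gives that $C$ is locally closed and $\cV$-compatible, hence equal to its own $\cV$-hull; together with periodicity of $\gamma$ this forces $\alpha(\gamma)=\omega(\gamma)=C$. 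Essentiality of $\gamma$ follows because $C$, being non-empty and invariant, cannot be contained in a single regular multivector---otherwise any essential solution inside $C$ (which exists by invariance of $C$) would have both limit sets lying in a regular multivector, contradicting essentiality.

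Since $r\in\PP_C$, both $\alpha(\gamma)\cap M_r=C\cap M_r$ and $\omega(\gamma)\cap M_r$ are non-empty, so $\gamma$ is a homoclinic connection from $M_r$ to $M_r$. Under the hypothesis $C\not\subset M_r$ the image $\im\gamma=C$ is not contained in $M_r$, so $\gamma$ is non-trivial, contradicting the saturation of $M_r$ in $X$. Therefore $C\subset M_r$. The ``in particular'' statement is immediate: for any $p\in\PP_C$ the non-empty set $M_p\cap C$ is contained in $M_p\cap M_r$, which forces $p=r$ by mutual disjointness of the pre-Morse sets.

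The main delicacy is identifying $\alpha(\gamma)=\omega(\gamma)=C$ via Theorem~\ref{thm:lcl_vcomp_iis} and verifying essentiality of $\gamma$ via the observation that a non-empty invariant set cannot coincide with a single regular multivector; once these structural facts are in hand, the saturation hypothesis immediately closes the argument.
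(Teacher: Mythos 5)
Your construction of the periodic solution $\gamma$ with $\im\gamma=\alpha(\gamma)=\omega(\gamma)=C$ is fine (the identification of the limit sets via local closedness and $\cV$-compatibility of $C$, and the essentiality argument, are both correct), but the step where you conclude is broken: from $\alpha(\gamma)\cap M_r\neq\emptyset\neq\omega(\gamma)\cap M_r$ you infer that $\gamma$ is a homoclinic \emph{connection} from $M_r$ to $M_r$. That condition is exactly the definition of a homoclinic \emph{link} (Definition~\ref{defn:mvf-links}); a \emph{connection} (Definition~\ref{defn:mvf-connections}) additionally requires $\alpha(\gamma)\subset M_r$ and $\omega(\gamma)\subset M_r$. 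Under your contradiction hypothesis $C\not\subset M_r$ you have $\alpha(\gamma)=\omega(\gamma)=C\not\subset M_r$, so $\gamma$ is precisely \emph{not} a connection, and the saturation hypothesis --- which only constrains homoclinic connections --- cannot be applied to it. The distinction is not pedantic: saturated pre-Morse sets can very well admit non-trivial homoclinic links (this is the whole point of Morse predecompositions; compare the torus of Example~\ref{ex:tor_irr}, where each singleton is saturated yet is the endpoint of links whose limit sets are the entire torus).

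The repair is to route the tails of the solution into $M_r$ rather than letting them wrap around $C$. This is what the paper does: fix an arbitrary $x\in C$, take the loop $\rho$ through $x$ with $\im\rho=C$ based at a point $y\in M_r\cap C$, pick an essential solution $\gamma\in\esol(y,M_r)$ (which exists since $M_r$ is invariant), and form the spliced solution $\gamma':=\gamma^-\cdot\rho\cdot\gamma^+$. Its limit sets coincide with those of $\gamma$, hence lie in $M_r$ because $M_r$ is locally closed and $\cV$-compatible; thus $\gamma'$ genuinely is a homoclinic connection of $M_r$, saturation yields $x\in\im\gamma'\subset M_r$, and since $x\in C$ was arbitrary, $C\subset M_r$ follows directly (no contradiction argument needed). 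Your closing deduction of $\VSet_C=\{r\}$ from $C\subset M_r$ and disjointness is correct as stated.
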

\begin{proof}
  To see that $C\subset M_r$ consider an $x \in C$. By Proposition~\ref{prop:strongly-connected-surjection}
  we can construct a path $\rho:[0,n]_\mathbb{Z}\to C$ through $x$ such that $\im\rho=C$ and $\rho(0)=\rho(n)$.
  Since $r\in\VSet_C(\cM)$, we can find a $y\in M_r\cap C$. Then $y=\rho(k)$ for some $k\in [0,n]_\mathbb{Z}$.
  Without loss of generality we may assume that $k=0$. Let $\gamma$ be an essential solution through $y$ in $M_r$
  and let $\gamma':=\gamma^-\cdot\rho\cdot\gamma^+$. Then $\gamma'$ is an essential solution through $x$
  and $\alpha(\gamma')=\alpha(\gamma)\subset M_r$ and $\omega(\gamma')=\omega(\gamma)\subset M_r$.
  Therefore, $\gamma'$ is a homoclinic connection of $M_r$.
  Since $M_r$ is saturated we get $x\in\im\gamma'\subset M_r$.
  Consequently, $C\subset M_r$.
  Since the elements of $\cM$ are mutually disjoint, we also get $\VSet_C=\{r\}$.
  \qed
\end{proof}

The following theorem gives characterizations of  Morse decomposition. 
In particular, condition (iii) shows that Definition~\ref{defn:mvf-Morse-decomposition} is equivalent to Definition~\ref{defn:mvf-Morse-decomposition-orig}
which is the definition of Morse decomposition assumed in~\cite[Definition 7.1]{LKMW2020}.

\begin{thm}
\label{thm:mvf-morse-decomposition}
Let $\cM=\{M_p\mid p\in\VSet\}$ be an indexed  family of  mutually disjoint subsets of $X$.
Then, the following conditions are mutually equivalent.
\begin{itemize}
   \item[(i)] The family $\cM$ is a Morse decomposition of $\cS$.
   \item[(ii)] Each $M_p$ is a saturated, isolated invariant subset of $X$ and 
   $\VSet$ admits a partial order $\leq$ such that for every essential solution $\gamma$ in $X$ there exist $p,q\in\VSet$ 
   satisfying $p\geq q$, $\alpha(\gamma)\subset M_p$ and $\omega(\gamma)\subset M_q$.
   \item[(iii)] Each $M_p$ is an isolated invariant subset of $X$ and 
   $\VSet$ admits a partial order $\leq$ such that for every essential solution $\gamma$ in $X$ 
   either $\im\gamma\subset M_p$ for some $p\in\VSet$ or 
   there exist $p,q\in\VSet$ 
   satisfying $p> q$, $\alpha(\gamma)\subset M_p$ and $\omega(\gamma)\subset M_q$.
\end{itemize}
In particular, every Morse set in a Morse decomposition is saturated and isolated. 
\end{thm}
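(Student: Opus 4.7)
The plan is to prove the cycle of implications (i) $\Rightarrow$ (ii) $\Rightarrow$ (iii) $\Rightarrow$ (i); the trailing remark that every Morse set is saturated and isolated then follows immediately, since both (ii) and (iii) assert this explicitly.

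For (i) $\Rightarrow$ (ii), Definition~\ref{defn:mvf-Morse-decomposition} directly supplies that each $M_p$ is a saturated isolated invariant set, and Proposition~\ref{prop:po-extension} upgrades the admissible preorder hypothesis to the fact that the flow-induced preorder $\leq_\cM$ itself is a partial order. Given an essential solution $\gamma$, the predecomposition property yields indices $p,q$ such that $\gamma$ is a link from $M_p$ to $M_q$, so $\alpha(\gamma)\cap M_p\neq\emptyset$ and $\omega(\gamma)\cap M_q\neq\emptyset$. The essential step is to strengthen these non-empty intersections to set containments: by Theorem~\ref{thm:mvf-limit_set_iis}, both $\alpha(\gamma)$ and $\omega(\gamma)$ are isolated invariant sets strongly connected in $G_\cV$, and since $M_p,M_q$ are saturated, Lemma~\ref{lem:mvf-minimal-element} applied to $C:=\alpha(\gamma)$ and $C:=\omega(\gamma)$ forces $\alpha(\gamma)\subset M_p$ and $\omega(\gamma)\subset M_q$. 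Admissibility of $\leq_\cM$ gives $p\geq q$, finishing this implication.

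For (ii) $\Rightarrow$ (iii), the only subtle case is when $p=q$ in the hypothesis, so that $\alpha(\gamma),\omega(\gamma)\subset M_p$; one must then conclude $\im\gamma\subset M_p$, placing $\gamma$ in the ``either'' branch of (iii). Assume toward contradiction that $\gamma(t)\in M_r$ for some $r\neq p$. Pick an essential solution $\varphi\in\esol(\gamma(t),M_r)$ and form the two spliced essential solutions $\delta_1:=\gamma_{(-\infty,t]}\cdot\varphi^+$ and $\delta_2:=\varphi^-\cdot\gamma_{[t,\infty)}$. Then $\alpha(\delta_1)=\alpha(\gamma)\subset M_p$, $\omega(\delta_1)=\omega(\varphi)\subset M_r$, and symmetrically $\alpha(\delta_2)\subset M_r$, $\omega(\delta_2)\subset M_p$; applying (ii) to $\delta_1$ and $\delta_2$, together with disjointness of the $M_p$'s, gives both $p\geq r$ and $r\geq p$, contradicting antisymmetry. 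Consequently $\im\gamma\cap\bigcup\cM\subset M_p$, making $\gamma$ a homoclinic connection; saturation of $M_p$ then yields $\im\gamma\subset M_p$. The case $p\neq q$ is immediate: $p\geq q$ combined with $p\neq q$ gives the strict inequality $p>q$.

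For (iii) $\Rightarrow$ (i), disjointness of the $M_p$ makes the indices in each of the two alternatives of (iii) unique once $\gamma$ is given, which is the lever behind every remaining verification. First, every essential solution is a link (either trivially, when $\im\gamma\subset M_p$, or via $\alpha(\gamma)\subset M_p$, $\omega(\gamma)\subset M_q$ with $p>q$), so $\cM$ is a Morse predecomposition. Second, the partial order of (iii) is admissible: for any link from $M_r$ to $M_s$ the disjointness pins the indices appearing in (iii) to $r$ and $s$, yielding $s\leq r$. Finally, saturation of each $M_p$ is forced by (iii) itself: for a homoclinic connection $\gamma$ from $M_p$ to $M_p$, the strict-order alternative is unavailable (it would require $p>p$), leaving $\im\gamma\subset M_r$ for the unique $r=p$, i.e.\ triviality. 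The main obstacle I anticipate is the splicing argument in (ii) $\Rightarrow$ (iii), where one must verify that the spliced solutions are indeed essential and have the claimed limit sets; the rest is bookkeeping around disjointness and the definitions of link, connection, and saturation.
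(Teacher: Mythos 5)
Your proposal is correct and follows essentially the same route as the paper: (i)$\Rightarrow$(ii) via Theorem~\ref{thm:mvf-limit_set_iis} and Lemma~\ref{lem:mvf-minimal-element}, (ii)$\Rightarrow$(iii) by showing $p=q$ forces $\im\gamma\cap\bigcup\cM\subset M_p$ and invoking saturation, and (iii)$\Rightarrow$(i) by disjointness bookkeeping. The only cosmetic difference is that in (ii)$\Rightarrow$(iii) you splice the solutions by hand where the paper invokes Proposition~\ref{prop:ess_path_extension}, which encapsulates exactly that splicing.
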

\proof
Assume (i). Then $\cM$ is a saturated Morse predecomposition and the flow induced preorder $\leq_\cM$ is a partial order. 
In particular, each $M_p$ is a saturated, isolated invariant subset of $X$.
We claim that the partial order $\leq_\cM$ fulfills the requirements in condition (ii). 
To see this consider an essential solution $\gamma$ in $X$.
By the definition of Morse predecomposition there exist $p,q\in\VSet$ such that  $\alpha(\gamma)\cap M_p\neq\emptyset$
    and $\omega(\gamma)\cap M_q\neq\emptyset$. 
By the definition of flow induced preorder we get $p\leq_\cM q$ and by Theorem~\ref{thm:mvf-limit_set_iis} 
    and Lemma~\ref{lem:mvf-minimal-element} we conclude that $\alpha(\gamma)\subset M_p$ and $\omega(\gamma)\subset M_q$.
Hence, (ii) follows from (i).

Assume in turn (ii). Then,  each $M_p$ is an isolated invariant subset of~$X$.
We claim that the partial order $\leq$ given in (ii) fulfills the requirements of condition (iii). 
To see this, consider an essential solution $\gamma$ in $X$ and assume there is no $p\in\VSet$ such that $\im\gamma\subset M_p$.
By (ii) there exist $p,q\in\VSet$ satisfying $p\geq q$, $\alpha(\gamma)\subset M_p$ and $\omega(\gamma)\subset M_q$.
We claim that $p>q$. To see this assume to the contrary that $p=q$.
We first prove that under this assumption we have $\VSet_{\im\gamma}\subset \{p\}$. 
To prove this consider an $r\in \VSet_{\im\gamma}$ such that $r\neq p$.
Then, by Proposition~\ref{prop:ess_path_extension}, one can construct a link from $M_p$ to $M_r$ and a link from $M_r$ to $M_q$, which gives $q\leq r \leq p=q$, which contradicts with the assumption that $\leq$ is a partial order.
It proves that $\VSet_{\im\gamma}\subset \{p\}$.
However, this means that $\gamma$ is a homoclinic connection to $M_p$. 
Since $M_p$ is saturated, we get  $\im\gamma\subset M_p$, a contradiction. 
Hence, $p>q$ which completes the proof of (iii).

Finally, assume (iii) and consider an essential solution $\gamma$ in $X$.
In order to prove that $\cM$ is a Morse predecomposition, we have to show that $\gamma$ is a link from $M_p$ to $M_q$ for some $p,q\in\VSet$.
If there is a $p\in\VSet$ such that $\im\gamma\subset M_p$, then also $\alpha(\gamma)\subset M_p$ and $\omega(\gamma)\subset M_p$,
which shows that $\gamma$ is a link from $M_p$ to $M_p$. 
Otherwise, we get from (iii) that there exist $p,q\in\VSet$ 
satisfying $p> q$, $\alpha(\gamma)\subset M_p$ and $\omega(\gamma)\subset M_q$, 
which also proves that $\gamma$ is a link.  This completes the proof that $\cM$ is a Morse predecomposition.

Clearly, $\leq$ is then an admissible partial order for $\cM$. We still need to prove that each $M_r\in\cM$ is saturated. 
To see this consider a full solution $\gamma$ such that $ \alpha(\gamma)\cup\omega(\gamma)\subset M_r$.
If $\im\gamma\not\subset M_r$, then 
we get from (iii) the existence of $p,q\in\VSet$ satisfying $p> q$, $\alpha(\gamma)\subset M_p$ and $\omega(\gamma)\subset M_q$.
However, Morse sets are mutually disjoint and limit sets are non-empty. Therefore $p=r=q$, a contradiction 
proving that $M_r$ is saturated.
\qed

\subsection{Consolidation theorem}
Assume that $\cV$ is a combinatorial multivector field on a finite topological space $X$ and
consider a fixed Morse predecomposition $\cM=\setof{M_p\mid p\in\VSet}$.
Given $\VSubSet\subset\VSet$ we denote the family of links in $X$ between pre-Morse sets with indexes in $\VSubSet$ by
\begin{equation*}
  \esol_{\VSubSet}(X) := \{\gamma\in\esol(X) \mid
        \VSet_{\alpha(\gamma)}\cap {\VSubSet}\neq\emptyset \neq \VSet_{\omega(\gamma)}\cap \VSubSet\},
\end{equation*}
and we define the \emph{connection set} of $\VSubSet$ by
\begin{equation}\label{eq:connection_set}
  M_{\VSubSet} := \bigcup\{\im\gamma \mid \gamma\in\esol_{\VSubSet}(X) \}.
\end{equation}

\begin{lem}\label{lem:connection_sets}
  Assume $\leq$ is an admissible preorder with respect to Morse predecomposition $\cM$.
  Then for every $\VSubSet\subset\VSet$ the set $M_{\VSubSet}$ is a saturated isolated invariant set.
  Moreover, if $\VSubSet$ is convex with respect to $\leq$ then $M_{\VSubSet}\cap M_r=\emptyset$ for all $r\in\VSet\setminus {\VSubSet}$.
\end{lem}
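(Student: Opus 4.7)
I will verify, in order, that $M_{\VSubSet}$ is invariant, $\cV$-compatible, and locally closed (so isolated invariant by Theorem~\ref{thm:lcl_vcomp_iis}), then saturated, and finally that it is disjoint from pre-Morse sets outside $\VSubSet$ when $\VSubSet$ is convex. The recurring tool is a splicing construction: given a witness $\gamma\in\esol_{\VSubSet}(X)$ through some point, insert short detours to obtain a new witness through a nearby point, taking care that the splice remains essential and retains the same $\alpha$- and $\omega$-intersections with $\VSubSet$-indexed pre-Morse sets.

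Invariance is immediate because the witness $\gamma$ through $x\in M_{\VSubSet}$ has $\im\gamma\subset M_{\VSubSet}$ by definition. For $\cV$-compatibility, let $y\in[x]_\cV$; since $x$ and $y$ share a multivector, both $y\in\Pi_\cV(x)$ and $x\in\Pi_\cV(y)$, so inserting the two-step detour $y,x$ immediately after $\gamma(0)=x$ produces a valid full solution that differs from $\gamma$ in only two positions and therefore has the same limits as $\gamma$, witnessing $y\in M_{\VSubSet}$. For local closedness I invoke Alexandrov's theorem to reduce to convexity in $\leq_{\cT}$: given $x\leq_{\cT}y\leq_{\cT}z$ with $x,z\in M_{\VSubSet}$, the inclusions $y\in\cl z$ and $x\in\cl y$ give $y\in\Pi_\cV(z)$ and $x\in\Pi_\cV(y)$, so splicing a witness $\gamma_z$ on $(-\infty,0]$, the path $y,x$ at $t=1,2$, and a witness $\gamma_x$ suitably shifted onto $[2,\infty)$ yields $y\in M_{\VSubSet}$. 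Theorem~\ref{thm:lcl_vcomp_iis} then gives isolated invariance.

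For saturation, consider a full solution $\gamma$ in $X$ with $\alpha(\gamma),\omega(\gamma)\subset M_{\VSubSet}$. Finiteness of $X$ ensures that $\gamma$ actually visits some $x_\alpha\in\alpha(\gamma)$ at arbitrarily negative times and some $x_\omega\in\omega(\gamma)$ at arbitrarily positive times; both points lie in $M_{\VSubSet}$. Pick times $t_\alpha<t_\omega$ with $\gamma(t_\alpha)=x_\alpha$, $\gamma(t_\omega)=x_\omega$ and witnesses $\gamma_\alpha,\gamma_\omega\in\esol_{\VSubSet}(X)$ through these points. Splicing $\gamma_\alpha$ on $(-\infty,t_\alpha]$, the segment $\gamma_{[t_\alpha,t_\omega]}$, and $\gamma_\omega$ on $[t_\omega,\infty)$ yields a full solution whose limits are $\alpha(\gamma_\alpha)$ and $\omega(\gamma_\omega)$, hence a member of $\esol_{\VSubSet}(X)$ covering every point of $\im\gamma$; thus $\im\gamma\subset M_{\VSubSet}$.

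Finally, assume $\VSubSet$ is $\leq$-convex and suppose $x\in M_{\VSubSet}\cap M_r$ with $r\notin\VSubSet$. A witness $\gamma$ through $x$ provides $p,q\in\VSubSet$ with $\alpha(\gamma)\cap M_p\neq\emptyset\neq\omega(\gamma)\cap M_q$; using strong connectedness of the limit sets (Theorem~\ref{thm:mvf-limit_set_iis}) one builds a combinatorial path from a point of $M_p$ through $\alpha(\gamma)$ and $\gamma$ to $x\in M_r$, and symmetrically from $x$ to a point of $M_q$. Proposition~\ref{prop:ess_path_extension} converts these into links $M_p\to M_r$ and $M_r\to M_q$, so admissibility of $\leq$ yields $q\leq r\leq p$, and convexity forces $r\in\VSubSet$, a contradiction. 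The main obstacle throughout is keeping the spliced solutions essential and verifying that their $\alpha$- and $\omega$-limits meet $\VSubSet$-indexed pre-Morse sets rather than merely intersecting $M_{\VSubSet}$, which is why in each step the outer pieces must be honest $\esol_{\VSubSet}$-witnesses.
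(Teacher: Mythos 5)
Your proof is correct and follows essentially the same route as the paper's: splice tails of $\esol_{\VSubSet}$-witnesses to establish invariance, $\cV$-compatibility, and convexity in $\leq_\cT$ (hence local closedness and isolation via Theorem~\ref{thm:lcl_vcomp_iis}), then derive $q\leq r\leq p$ for the convexity claim. The only cosmetic differences are that you spell out the saturation step (which the paper merely asserts) and obtain the links $M_p\to M_r$ and $M_r\to M_q$ via Proposition~\ref{prop:ess_path_extension} rather than by directly splicing with a solution in $\esol(x,M_r)$.
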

\begin{proof}
    First observe that $\esol(x,M_{\mathbb{Q}})$ is non-empty for every $x\in M_{\VSubSet}$ directly by definition (\ref{eq:connection_set}).
    Thus, $M_{\VSubSet}$ is invariant.
    We will show that $M_{\VSubSet}$ is $\cV$-compatible and locally closed.
    To prove $\cV$-compatibility take an $x\in M_{\VSubSet}$ and a $\gamma\in\esol(x,M_{\VSubSet})$.
    Consider a $y\in[x]_\cV$ and an essential solution $\gamma':=\gamma^-\cdot y\cdot\gamma^+$.
    Clearly, $\gamma' \in \esol_{\VSubSet}$.
    It follows that $y\in M_{\VSubSet}$.
    Therefore, if $x\in M_{\VSubSet}$ then $\supsetV{x}\subset M_\VSubSet$, which proves the $\cV$-compatibility of $M_{\VSubSet}$. 
    
    To show that $M_\VSubSet$ is locally closed it suffices to prove that $M_\VSubSet$ is convex with respect to the order $\leq_\cT$ on $X$ given by the topology (see \eqref{eq:poset_topology} in Section~\ref{sec:top}).
    Hence, consider $x,z\in M_\VSubSet$ and a $y\in X$ such that $x\leq_\cT y\leq_\cT z$.
    Since there exist $\gamma_x\in\esol(x,M_\VSubSet)$ and $\gamma_z\in\esol(z,M_\VSubSet)$, we can construct an essential solution $\psi:=\gamma_z^-\cdot y\cdot\gamma_x^+$.
    We have $\alpha(\psi)\cap M_p=\alpha(\gamma_z)\cap M_p\neq\emptyset$ and $\omega(\psi)\cap M_q=\omega(\gamma_x)\cap M_q\neq\emptyset$ for some $p,q\in \VSubSet$.
    Hence, $\psi\in\esol_\VSubSet$ and $y\in\im\psi\subset M_\VSubSet$. This proves that $M_\VSubSet$ is convex with respect to $\leq_\VSubSet$, that is locally closed.
    Hence, we get from Theorem~\ref{thm:lcl_vcomp_iis} that $M_\VSubSet$ is an isolated invariant set. It follows directly from the definition of $M_\VSubSet$ that $M_\VSubSet$ is saturated.
    
    To prove the remaining assertion assume that $\VSubSet\subset \VSet$ is convex with respect to the admissible preorder $\leq$ and $M_\VSubSet\cap M_r\neq\emptyset$ for some $r\in\VSet\setminus \VSubSet$.
    Then we can select an $x\in\im\gamma\cap M_r$ for some $\gamma\in\esol(x, M_\VSubSet)$
    and, by Definition \eqref{eq:connection_set}, we can find $p,q\in \VSubSet$ such that $\alpha(\gamma)\cap M_p\neq\emptyset$ and $\omega(\gamma)\cap M_q\neq\emptyset$.
    Since $M_r$ is invariant there exists a $\psi\in\esol(x,M_r)$.
    Consider essential solutions $\gamma_q:=\psi^-\cdot\gamma^+$ and $\gamma_p:=\gamma^-\cdot\psi^+$.
    We have
    \begin{align*}
        \alpha(\gamma_q)\subset M_r,\ 
        \omega(\gamma_q)\cap M_q\neq\emptyset,\  
        \alpha(\gamma_p)\cap M_p\neq\emptyset\text{, and}\
        \omega(\gamma_p)\subset M_r.
    \end{align*}
    By the admissibility of $\leq$  we get $q\leq r\leq p$. Since $p,q\in \VSubSet$ and $r\not\in \VSubSet$,  
    this contradicts the assumption that $\VSubSet$ is convex.
    \qed
\end{proof}

\begin{thm} (Consolidation theorem)
\label{thm:mvf_consolidation}
  Let $\cM=\setof{M_p\mid p\in\VSet}$ be a Morse predecomposition of $X$ and let $\leq$ be an admissible preorder.
  Assume $\VSetBis $ is a partition of $\VSet$ consisting of non-empty, 
  convex with respect to $\leq$ subsets of $\VSet$ such that the induced relation $\leq_\cQ$ is a partial order, 
  where $\leq_\cQ$ is defined for  $Q,Q'\in\VSetBis$ by $Q\leq_\cQ Q'$ if and only if there exist a $q\in Q$ and a $q'\in Q'$ such that $q\leq q'$.
  Then, $\cM' := \left\{M_Q \mid Q\in\VSetBis \right\}$ is a Morse decomposition of $X$.
\end{thm}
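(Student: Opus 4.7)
The plan is to verify Definition~\ref{defn:mvf-Morse-decomposition} for $\cM'$. Since $\leq_\cQ$ is a partial order by hypothesis, it suffices to check (a) the members of $\cM'$ are mutually disjoint saturated isolated invariant subsets of $X$; (b) every $\gamma\in\esol(X)$ is a link from some $M_P$ to some $M_Q$ with $P,Q\in\cQ$; and (c) $\leq_\cQ$ is admissible for $\cM'$, because these three together are exactly the requirements of Definition~\ref{defn:mvf-Morse-decomposition}.

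For (a), Lemma~\ref{lem:connection_sets} already gives that each $M_Q$ is a saturated isolated invariant set. For disjointness, I would argue by contradiction: suppose $x\in M_Q\cap M_{Q'}$ with $Q\ne Q'$. Pick $\gamma\in\esol_Q(X)$ through $x$ witnessing $p,q\in Q$ with $\alpha(\gamma)\cap M_p\ne\emptyset$ and $\omega(\gamma)\cap M_q\ne\emptyset$, and analogously $\delta\in\esol_{Q'}(X)$ through $x$ with witnesses $p',q'\in Q'$. The spliced essential solutions $\gamma^-\cdot x\cdot\delta^+$ and $\delta^-\cdot x\cdot\gamma^+$ have $\alpha$- and $\omega$-limits meeting $M_p,M_{q'}$, respectively $M_{p'},M_q$. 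Admissibility of $\leq$ for $\cM$ yields $q'\leq p$ and $q\leq p'$, so $Q\leq_\cQ Q'$ and $Q'\leq_\cQ Q$; antisymmetry of $\leq_\cQ$ forces $Q=Q'$, the desired contradiction.

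For (b), fix $\gamma\in\esol(X)$; since $\cM$ is a Morse predecomposition there exist $p,q\in\VSet$ with $\alpha(\gamma)\cap M_p\ne\emptyset$ and $\omega(\gamma)\cap M_q\ne\emptyset$. Let $P,Q\in\cQ$ be the unique parts containing $p,q$. The inclusion $M_r\subseteq M_R$ whenever $r\in R\in\cQ$ is immediate: any essential solution $\sigma$ in a non-empty invariant $M_r$ satisfies $\alpha(\sigma)\cup\omega(\sigma)\subseteq M_r$ by Theorem~\ref{thm:lcl_vcomp_iis}, so $\sigma\in\esol_R(X)$ and hence $\im\sigma\subseteq M_R$. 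Therefore $\alpha(\gamma)\cap M_P\ne\emptyset$ and $\omega(\gamma)\cap M_Q\ne\emptyset$, i.e., $\gamma$ is a link from $M_P$ to $M_Q$ in $\cM'$.

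For (c), let $\gamma$ be a link from $M_P$ to $M_Q$ in $\cM'$, and choose $x\in\alpha(\gamma)\cap M_P$, $y\in\omega(\gamma)\cap M_Q$. By the definition of $M_P,M_Q$ there are $\delta_P\in\esol_P(X)$ through $x$ and $\delta_Q\in\esol_Q(X)$ through $y$; fix $p_0\in P$ with $\alpha(\delta_P)\cap M_{p_0}\ne\emptyset$ and $q_0\in Q$ with $\omega(\delta_Q)\cap M_{q_0}\ne\emptyset$. Using the strong connectedness of the limit sets $\alpha(\delta_P)$, $\alpha(\gamma)$, $\omega(\gamma)$, $\omega(\delta_Q)$ from Theorem~\ref{thm:mvf-limit_set_iis} together with the path-construction technique in the proof of Proposition~\ref{prop:ess_path_extension}, one splices a single path running from a point of $M_{p_0}$ through $x$, across $\gamma$ between its limit sets, through $y$, and finally into $M_{q_0}$. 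Proposition~\ref{prop:ess_path_extension} converts this path into a link from $M_{p_0}$ to $M_{q_0}$ in $\cM$, and admissibility of $\leq$ gives $q_0\leq p_0$, hence $Q\leq_\cQ P$. The main obstacle is precisely this concatenation step: one must realize the abstract points $x\in\alpha(\gamma)$ and $y\in\omega(\gamma)$ as concrete endpoints of paths that hook into the actual orbit of $\gamma$, which is exactly what the strong connectedness supplied by Theorem~\ref{thm:mvf-limit_set_iis} enables. With (a)--(c) established, Definition~\ref{defn:mvf-Morse-decomposition} yields that $\cM'$ is a Morse decomposition of $X$.
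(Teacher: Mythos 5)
Your proof is correct, and while its first half coincides with the paper's, the second half takes a genuinely different route. Like the paper, you invoke Lemma~\ref{lem:connection_sets} for saturation and isolation of each $M_Q$, and you prove disjointness by splicing two essential solutions through a common point and appealing to admissibility of $\leq$ plus antisymmetry of $\leq_\cQ$. The divergence is in how the partial order is brought to bear. The paper proves the stronger containments $\alpha(\gamma)\subset M_Q$ and $\omega(\gamma)\subset M_{Q'}$ for every essential solution $\gamma$, the key tool being Proposition~\ref{prop:limit_solution}, which realizes $\alpha(\gamma)$ as $\im\psi$ for a single essential solution $\psi$ with $\alpha(\psi)=\omega(\psi)=\im\psi=\alpha(\gamma)$, so that $\psi\in\esol_Q$ and the entire limit set lands in $M_Q$; the conclusion then follows from condition (ii) of Theorem~\ref{thm:mvf-morse-decomposition}, with admissibility of $\leq_\cQ$ coming for free from the witnesses $q'\leq q$. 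You instead establish only that the limit sets \emph{meet} the consolidated sets (via the easy inclusion $M_r\subset M_R$ for $r\in R$), and then separately prove admissibility of $\leq_\cQ$ for $\cM'$ by concatenating paths through the four strongly connected limit sets $\alpha(\delta_P)$, $\alpha(\gamma)$, $\omega(\gamma)$, $\omega(\delta_Q)$ and converting the result into a link via Proposition~\ref{prop:ess_path_extension}, concluding directly from Definition~\ref{defn:mvf-Morse-decomposition}. Both routes are sound; the paper's is shorter because Proposition~\ref{prop:limit_solution} makes the admissibility step automatic, whereas your step (c) must do the splicing by hand. A side remark: your argument never uses convexity of the blocks explicitly, which is consistent, since antisymmetry of $\leq_\cQ$ already forces each block of the partition to be convex.
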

\begin{proof}
  We see from Lemma \ref{lem:connection_sets} that   $M_Q$ is a saturated isolated invariant set for every $Q\in\cQ$.
  Consider $M_Q,M_{Q'}\in\cM'$ for some $Q,Q'\in\VSetBis $ such that $Q\neq Q'$.
  To see that $M_Q$ and $M_{Q'}$ are disjoint, suppose to the contrary that there exists an $x\in M_Q\cap M_{Q'}$.
  Select a $\gamma \in\esol(x,M_Q)$ and a $\gamma'\in\esol(x,M_{Q'})$.
  Then $\gamma_{QQ'}:=\gamma^-\cdot(\gamma')^+$ is an essential solution satisfying 
    $\alpha(\gamma_{QQ'})=\alpha(\gamma)\subset M_Q$ and
    $\omega(\gamma_{QQ'})=\omega(\gamma')\subset M_{Q'}$.
    It follows that $\gamma_{QQ'}$ is a link from $M_q$ to $M_{q'}$ for some $q\in Q$ and $q'\in Q'$.
    Since $\leq$ is admissible, we get $q'\leq q$ and in consequence, also  $Q'\leq_\cQ Q$. 
  Similarly, considering $\gamma_{Q'Q}:=\gamma^-\cdot(\gamma')^+$ we get $Q\leq_\cQ Q'$.
  Since $\leq_\cQ$ is a partial order, we get $Q=Q'$, a contradiction. 
  Hence, $\cM'$ is a Morse predecomposition.
 
  Consider now a $\gamma\in\esol(X)$.
  By the definition of Morse predecomposition we can find $q,q'\in\VSet$ 
  such that $\alpha(\gamma)\cap M_q\neq\emptyset$ and $\omega(\gamma)\cap M_{q'}\neq\emptyset$.
  Since $\leq$ is an admissible preorder, we get $q'\leq q$.  
  Since $\VSetBis $ is a partition, there are some $Q,Q'\in\VSetBis $ such that $q\in Q$ and $q'\in Q'$.
  By Proposition \ref{prop:limit_solution} we can find a $\psi\in\esol(X)$ such that $\alpha(\psi)=\omega(\psi)=\im\psi=\alpha(\gamma)$.
  Therefore, $\psi\in\esol_Q$ and $\alpha(\gamma)=\im\psi\subset M_Q$.
  Similarly, we argue that $\omega(\gamma)\subset M_{Q'}$. 
  Since $q'\leq q$ we get $Q'\leq_\cQ Q$. Therefore, $\cM$ satisfies condition (ii) of Theorem~\ref{thm:mvf-morse-decomposition}
  which means that $\cM$ is a Morse decomposition. 
  \qed
\end{proof}

\begin{cor}
    Let $\cM=\{ M_p\mid p\in \VSet\}$ be a Morse predecomposition of $X$.
    If $\cQ$ is a partition of $\VSet$ into strongly connected components of graph $G_\cM$
    then 
    $\cM' := \{M_Q \mid Q\in\cQ\}$
    is a Morse decomposition of $X$.
\end{cor}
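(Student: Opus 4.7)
The plan is to apply the Consolidation Theorem (Theorem~\ref{thm:mvf_consolidation}) with $\leq$ taken to be the flow induced preorder $\leq_\cM$, which is admissible by construction. By hypothesis $\cQ$ is a partition of $\VSet$ into non-empty subsets, namely the strongly connected components of $G_\cM$ (understood as the equivalence classes of mutual reachability, so that every vertex appears in exactly one class). Consequently the only substantive verifications are that each $Q \in \cQ$ is convex with respect to $\leq_\cM$ and that the induced relation $\leq_\cQ$ is a partial order rather than merely a preorder.

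For convexity I would first translate the preorder into walks: since $\leq_\cM$ is the smallest preorder for which every edge $(p,q)$ of $G_\cM$ satisfies $q \leq_\cM p$, the relation $q \leq_\cM p$ is equivalent to the existence of a walk from $p$ to $q$ in $G_\cM$ of length zero or more. Fix now $Q \in \cQ$, take $p, q \in Q$ and $r \in \VSet$ with $p \leq_\cM r \leq_\cM q$. This yields a walk from $r$ to $p$ and a walk from $q$ to $r$; concatenating them with a walk from $p$ to $q$ available inside the strongly connected set $Q$ produces walks between $r$ and $p$ in both directions. Hence $r$ lies in the same strongly connected component as $p$, that is, $r \in Q$.

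For $\leq_\cQ$ only antisymmetry is at issue, reflexivity and transitivity being immediate from the corresponding properties of $\leq_\cM$. Assume $Q \leq_\cQ Q'$ and $Q' \leq_\cQ Q$. Then there are $p \in Q$, $p' \in Q'$ with $p \leq_\cM p'$, and $q \in Q$, $q' \in Q'$ with $q' \leq_\cM q$; equivalently, there are walks $p' \to p$ and $q \to q'$ in $G_\cM$. Stitching them with the internal walks $p \to q$ inside $Q$ and $q' \to p'$ inside $Q'$ produces a walk $p \to p'$, which together with $p' \to p$ establishes mutual reachability between $p$ and $p'$; this forces $Q = Q'$. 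With convexity and antisymmetry in hand, Theorem~\ref{thm:mvf_consolidation} applies and delivers the desired Morse decomposition.

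I expect the convexity check to be the main conceptual step: it is precisely the strong-connectedness of each $Q$ that prevents $\leq_\cM$ from reaching outside $Q$ and that allows the consolidated pre-Morse sets $M_Q$ to be well-defined and disjoint. The remaining reasoning is a bookkeeping exercise in concatenating walks, so once convexity is secured the antisymmetry argument and the invocation of the Consolidation Theorem are routine.
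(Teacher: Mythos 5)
Your proof is correct and follows the route the paper intends: the corollary is stated there without proof as a direct application of Theorem~\ref{thm:mvf_consolidation}, and you supply exactly the missing verifications — convexity of each strongly connected component with respect to the flow induced preorder $\leq_\cM$ and antisymmetry of $\leq_\cQ$ — by the natural walk-concatenation arguments. One small caveat: transitivity of $\leq_\cQ$ is not purely formal from transitivity of $\leq_\cM$ as you assert, since chaining the two witnesses $q\leq_\cM q'$ and $r'\leq_\cM r''$ with $q',r'$ in the same class requires a walk from $r'$ to $q'$ inside that class, i.e.\ its strong connectivity — but this is the same stitching you already perform for convexity and antisymmetry, so it is a one-line fix rather than a gap.
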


\begin{cor}
\label{cor:preMorse=Morse}
  Let $\cM$ be a Morse predecomposition of $X$.
  If  there is a partial order  among admissible preorders in $\VSet$ then
    $\cM' := \{M_{\{p\}} \mid p\in\VSet\}$
    is a Morse decomposition of $X$.
\end{cor}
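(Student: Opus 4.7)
The plan is to recognize this as the instance of the Consolidation theorem (Theorem~\ref{thm:mvf_consolidation}) corresponding to the finest possible partition of the index set. Fix, by hypothesis, an admissible preorder $\leq$ on $\VSet$ that happens to be a partial order, and take $\cQ := \{\{p\} \mid p \in \VSet\}$, the partition of $\VSet$ into singletons. Then $\cM'$ in the statement of the corollary is literally $\{M_Q \mid Q \in \cQ\}$.

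Two things must be verified before invoking Theorem~\ref{thm:mvf_consolidation}. First, each block $\{p\} \in \cQ$ is non-empty and vacuously convex with respect to $\leq$, since no element of $\VSet$ can lie strictly between $p$ and itself. Second, the induced relation $\leq_\cQ$ must be shown to be a partial order. By the definition recalled in Theorem~\ref{thm:mvf_consolidation}, $\{p\} \leq_\cQ \{q\}$ holds iff $p \leq q$, so the map $p \mapsto \{p\}$ is an order isomorphism from $(\VSet,\leq)$ onto $(\cQ,\leq_\cQ)$; hence $\leq_\cQ$ inherits reflexivity, transitivity, and antisymmetry from $\leq$.

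With both hypotheses in hand, Theorem~\ref{thm:mvf_consolidation} immediately yields that $\cM' = \{M_{\{p\}} \mid p \in \VSet\}$ is a Morse decomposition of $X$. I do not anticipate any real obstacle here: the content of the corollary is simply that the singleton partition is a legitimate input to the consolidation procedure as soon as one admissible preorder happens to be antisymmetric, so the proof reduces to unpacking definitions and citing the previous theorem.
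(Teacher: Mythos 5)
Your proof is correct and is exactly the argument the paper intends: the corollary is the Consolidation theorem applied to the partition of $\VSet$ into singletons, for which $\leq_\cQ$ is order-isomorphic to the given admissible partial order $\leq$. One small wording point: convexity of $\{p\}$ is not quite vacuous --- it is where the antisymmetry of $\leq$ is used, since $p\leq y\leq p$ forces $y=p$ only because $\leq$ is a partial order --- but your parenthetical justification already captures this.
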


We note that $M_{\{p\}}$ in general may be bigger than $M_p$, because it may contain points on solutions homoclinic to $M_p$. 
However, $M_{\{p\}}=M_p$ if $M_p$ is saturated. Therefore, we also have the following corollary.

\begin{cor}
  Let $\cM$ be a saturated Morse predecomposition of $X$.
  If  there is a partial order  among admissible preorders in $\VSet$ then
    $\cM$ is a Morse decomposition of $X$.
\end{cor}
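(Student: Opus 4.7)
The plan is to reduce the statement to the preceding Corollary~\ref{cor:preMorse=Morse}, which already establishes that $\cM' := \{M_{\{p\}} \mid p \in \VSet\}$ is a Morse decomposition under the partial-order hypothesis. It then suffices to show that saturation of each $M_p$ forces $M_{\{p\}} = M_p$, so that $\cM = \cM'$ as indexed families.

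The inclusion $M_p \subset M_{\{p\}}$ is immediate: if $M_p$ is empty there is nothing to prove, while any $x \in M_p$ lies on some $\gamma \in \esol(x, M_p)$ by invariance of $M_p$, and such a $\gamma$ belongs to $\esol_{\{p\}}(X)$ since $\alpha(\gamma), \omega(\gamma) \subset M_p$ by Theorem~\ref{thm:lcl_vcomp_iis} together with \eqref{eq:mvf_limit_sets}.

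For the reverse inclusion $M_{\{p\}} \subset M_p$, I would take an arbitrary $\gamma \in \esol_{\{p\}}(X)$ and proceed in two steps. First, by Theorem~\ref{thm:mvf-limit_set_iis} the sets $\alpha(\gamma)$ and $\omega(\gamma)$ are isolated invariant sets strongly connected in $G_\cV$; since both meet the saturated pre-Morse set $M_p$, Lemma~\ref{lem:mvf-minimal-element} yields $\alpha(\gamma) \subset M_p$ and $\omega(\gamma) \subset M_p$. Second, to invoke the saturation hypothesis in the form given by Definition~\ref{defn:mvf-connections}, I must upgrade $\gamma$ to a genuine homoclinic \emph{connection}, i.e., verify $\im\gamma \cap \bigcup\cM \subset M_p$. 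Suppose to the contrary that some $y \in \im\gamma$ lies in $M_r$ with $r \neq p$. Splitting $\gamma$ at $y$ and prepending or appending essential solutions in $M_p$ and $M_r$ via Proposition~\ref{prop:ess_path_extension}, one obtains both a link from $M_p$ to $M_r$ and a link from $M_r$ to $M_p$, forcing $r \leq_\cM p$ and $p \leq_\cM r$. By Proposition~\ref{prop:po-extension}, the existence of an admissible partial order makes $\leq_\cM$ itself a partial order, so antisymmetry gives $r = p$, a contradiction. Hence $\gamma$ is a homoclinic connection of $M_p$, and saturation then forces $\im\gamma \subset M_p$.

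The only substantive obstacle is the second step above: the definition of $M_{\{p\}}$ records arbitrary homoclinic \emph{links} to $M_p$, whereas saturation (as stated) only directly controls homoclinic \emph{connections}. The partial-order hypothesis, transmitted through Proposition~\ref{prop:po-extension} to $\leq_\cM$, is precisely what bridges this gap by ruling out intermediate pre-Morse sets along $\gamma$.
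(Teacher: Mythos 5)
Your proposal is correct and follows exactly the route the paper intends: the paper gives no explicit proof, relying on Corollary~\ref{cor:preMorse=Morse} together with the preceding remark that $M_{\{p\}}=M_p$ when $M_p$ is saturated (and indeed the statement is also immediate from Definition~\ref{defn:mvf-Morse-decomposition} itself). Your detailed verification of $M_{\{p\}}=M_p$ --- in particular using the admissible partial order, via Proposition~\ref{prop:po-extension} and Proposition~\ref{prop:ess_path_extension}, to rule out intermediate pre-Morse sets so that the homoclinic link is a genuine connection in the sense of Definition~\ref{defn:mvf-connections} --- correctly fills in the detail the paper leaves unproved, mirroring the argument in the proof of Theorem~\ref{thm:mvf-morse-decomposition}.
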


\section{Predecompositions and decompositions for flows}\label{sec:predecomposition_continuous}

In this section we introduce the concept of Morse predecomposition for flows.
We show that, as in the combinatorial case, it generalizes Morse decomposition. 
Moreover, we introduce the notion of Conley predecomposition which allows us to formulate a stability result.
However, the analysis of Morse predecompositions for flows becomes more complicated.
In particular, it is an open question if a counterpart of the consolidation theorem (Theorem \ref{thm:mvf_consolidation}) holds for flows.

\subsection{Morse predecompositions}\label{subsec:predecomposition_continuous}
Let $\varphi:X\times\RR\to X$ be a fixed flow on a locally compact metric space $X$
and let $S$ be a compact, non-empty  invariant set with respect to $\varphi$.
Consider closed invariant sets $S_1$, $S_2$ contained in $S$. 

\begin{defn}
\label{defn:links}
A full solution $\gamma:\RR\rightarrow S$
is a {\em link from $S_1$ to $S_2$ in $S$} if $\alpha(\gamma)\cap S_1\neq\emptyset\neq\omega(\gamma)\cap S_2$. 
Link $\gamma$ is {\em homoclinic} if $S_1=S_2$. Otherwise it is {\em heteroclinic}.
Link $\gamma$  is a {\em connection from $S_1$ to $S_2$} if $\alpha(\gamma)\subset S_1$ and $\omega(\gamma)\subset S_2$.
A homoclinic link $\gamma$  is {\em trivial} if $\im\gamma\subset S_1=S_2$. 
Otherwise it is {\em non-trivial}.
Clearly, a trivial link is a homoclinic connection. However, note that not every homoclinic connection is a trivial link. 
\end{defn}

Observe that a link need not be a connection. However, since $S$ is compact, limit sets of full solutions in $S$ are always non-empty
and, in consequence, every connection is a link.

\begin{defn}
\label{defn:predecflow}
By a \emph{Morse predecom\-posi\-tion of} $S$ we mean 
an indexed family $\cM=\{M_p\mid p\in\VSet\}$ of mutually disjoint closed, invariant subsets of $S$ 
such that every full solution  $\gamma:\RR\rightarrow S$ is a link from $M_p$ to $M_q$ for some $p,q\in\VSet$. 
We refer to the sets in $\cM$ as {\em pre-Morse} sets.
\end{defn}

\begin{prop}
    An indexed family $\cM\{M_p\mid p\in\VSet\}$ of  mutually disjoint closed, invariant subsets of $S$
    is a Morse predecom\-posi\-tion of $S$ if and only if 
    we have $T\cap\bigcup\cM\neq\emptyset$ for every non-empty closed invariant set $T\subset S$.
\end{prop}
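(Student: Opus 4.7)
The plan is to prove each implication directly, relying on Proposition~\ref{prop:limits_are_chain_recurrent} to guarantee that limit sets of full solutions in $S$ are non-empty, closed, and invariant.

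For the forward implication ($\Rightarrow$), I would assume $\cM$ is a Morse predecomposition and take an arbitrary non-empty closed invariant set $T\subset S$. Picking any $x\in T$ and using invariance of $T$, the full solution $\gamma(t):=\varphi(x,t)$ lies entirely in $T$, hence also in $S$. By Definition~\ref{defn:predecflow} there are $p,q\in\VSet$ with $\alpha(\gamma)\cap M_p\neq\emptyset$ and $\omega(\gamma)\cap M_q\neq\emptyset$. Because $T$ is closed and contains the backward orbit of $x$, the definition of the alpha limit set forces $\alpha(\gamma)\subset T$, so $T\cap M_p\neq\emptyset$ and in particular $T\cap\bigcup\cM\neq\emptyset$.

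For the converse ($\Leftarrow$), I would take an arbitrary full solution $\gamma:\RR\to S$ and consider the sets $T_\alpha:=\alpha(\gamma)$ and $T_\omega:=\omega(\gamma)$. By Proposition~\ref{prop:limits_are_chain_recurrent} (applied inside the compact set $S$), both sets are non-empty, compact (hence closed in $X$), and invariant subsets of $S$. Applying the hypothesis to $T_\alpha$ yields some $p\in\VSet$ with $\alpha(\gamma)\cap M_p\neq\emptyset$, and applying it to $T_\omega$ yields some $q\in\VSet$ with $\omega(\gamma)\cap M_q\neq\emptyset$. This is exactly the statement that $\gamma$ is a link from $M_p$ to $M_q$, so $\cM$ satisfies Definition~\ref{defn:predecflow}.

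There is no serious obstacle in this argument; the main point to check is the matching of hypotheses in the two directions: closedness of $T$ is what allows us to contain $\alpha(\gamma)$ inside $T$ in the forward direction, while non-emptiness of limit sets (which requires $S$ compact so that orbits cannot escape to infinity) is what lets us apply the set-theoretic criterion to $\alpha(\gamma)$ and $\omega(\gamma)$ in the backward direction. Both of these are supplied by the standing assumption that $S$ is compact together with Proposition~\ref{prop:limits_are_chain_recurrent}.
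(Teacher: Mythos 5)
Your proof is correct and follows essentially the same route as the paper's: a full solution through a point of $T$ stays in $T$ by invariance, its limit set lies in $T$ by closedness, and the converse applies the set-theoretic criterion to $\alpha(\gamma)$ and $\omega(\gamma)$, which Proposition~\ref{prop:limits_are_chain_recurrent} guarantees are non-empty, closed and invariant. The only cosmetic difference is that the paper argues via $\omega(\gamma)$ in the forward direction where you use $\alpha(\gamma)$; both work symmetrically.
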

\begin{proof}
   Assume $\cM=\{M_p\mid p\in\VSet\}$ is a Morse predecom\-posi\-tion of $S$ and $T\subset S$ is a non-empty invariant set.
Select an $x\in T$. Since $T$ is invariant, there is a full solution $\gamma:\RR\to T$ through $x$.
Let $q\in\VSet$ be such that $\omega(\gamma)\cap M_q\neq\emptyset$. Since $\omega(\gamma)\subset\cl\im\gamma\subset\cl T=T$, 
we conclude that $\emptyset\neq \omega(\gamma)\cap M_q \subset T\cap\bigcup\cM$. 

Assume in turn that $T\cap\bigcup\cM\neq\emptyset$ for every non-empty closed invariant set $T\subset S$
and consider a full solution $\gamma:\RR\to S$. 
By Proposition~\ref{prop:limits_are_chain_recurrent} sets $\alpha(\gamma)$ and $\omega(\gamma)$ are closed and invariant.
Hence, we will find some $p,q\in\VSet$ such that $\alpha(\gamma)\cap M_p\neq\emptyset$ and $\omega(\gamma)\cap M_q\neq\emptyset$,
which proves that $\gamma$ is a link and, therefore, $\cM$ is a Morse predecom\-posi\-tion of $S$.
\qed
\end{proof}

The \emph{digraph induced by Morse predecom\-posi\-tion $\cM$} has $\VSet$ as its vertex set 
and an edge from $p$ to $q$ if there exists a non-trivial link from $M_p$ to $M_q$ in $S$.
We denote it $G_\cM$.

We say that a preorder $\leq$ in $\VSet$ is {\em admissible} if the existence of a link from $M_p$ to $M_q$ implies $q\leq p$.
Clearly, the preorder induced by the transpose of $G_\cM$ is admissible.  
We denote it by  $\leq_\cM$ and we call it the {\em flow induced preorder}.

A \emph{Conley model of $\cM$} is a digraph $C=(V,E)$ with a map 
$\nu_{G_\cM, C}:\cM\to V$ such that it preserves the graph induced preorders.
Clearly, $G_\cM$ itself is a Conley model. We call $G_\cM$ the \emph{flow-defined Conley model} of $\cM$.

\begin{ex}
\label{ex:disk1}
Consider the flow in Figure~\ref{fig:fourDisks}(top left).
The equilibrium $M_1$ shown in red is an isolated invariant set.
However, the only Morse predecomposition is 
\[
\cS := \setof{D}.
\]
The flow defined Conley model consist of a single vertex and no edges, $G = \left(\setof{v}, \emptyset \right)$.
Another possible Conley model consists of a single vertex and a self edge, $G = \left(\setof{v}, \setof{(v,v)} \right)$.
\exend
\end{ex}

\begin{figure}
    \includegraphics[width=0.48\textwidth]{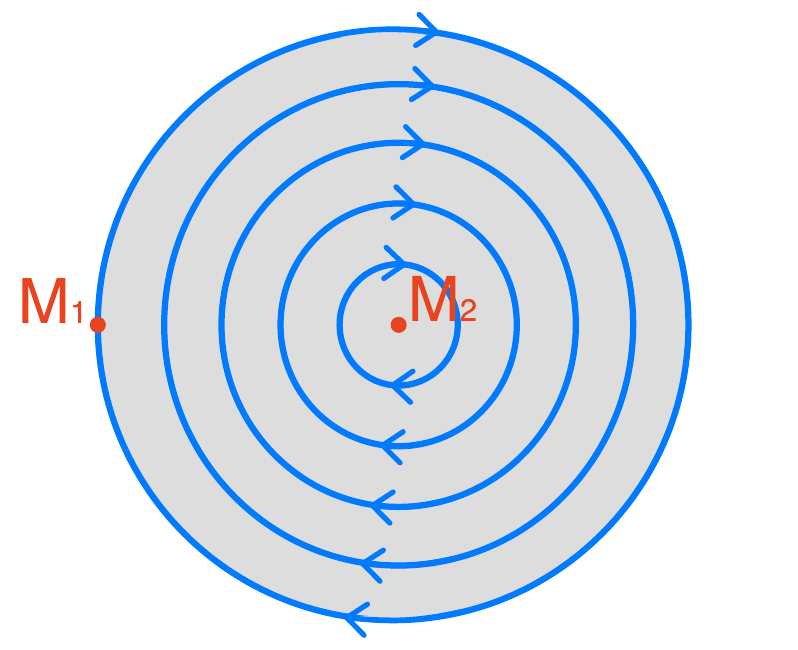}
    \includegraphics[width=0.48\textwidth]{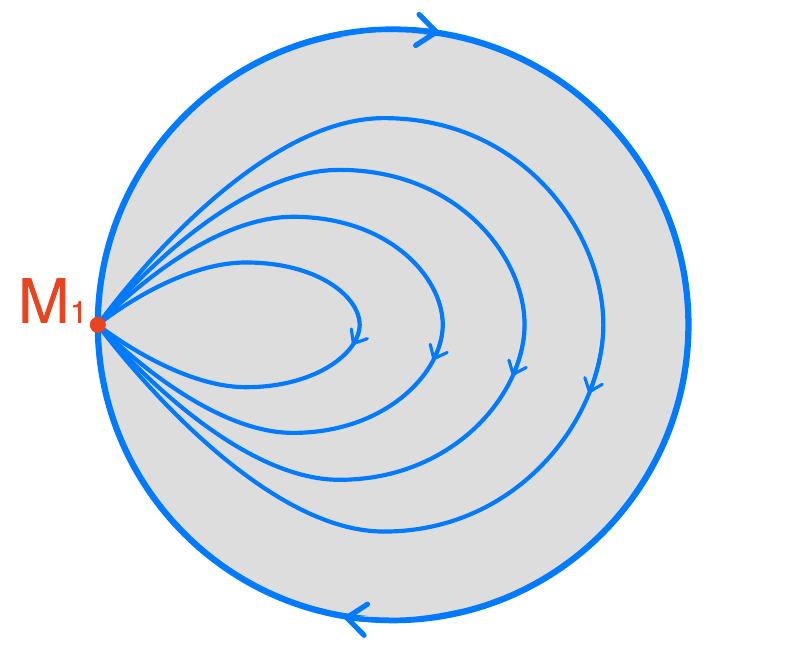}
    
    \includegraphics[width=0.48\textwidth]{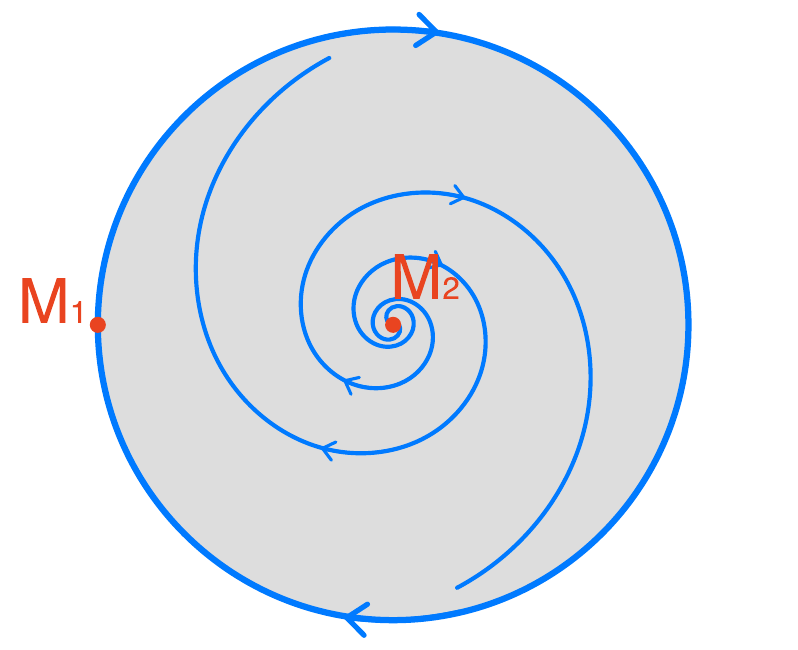}
    \includegraphics[width=0.48\textwidth]{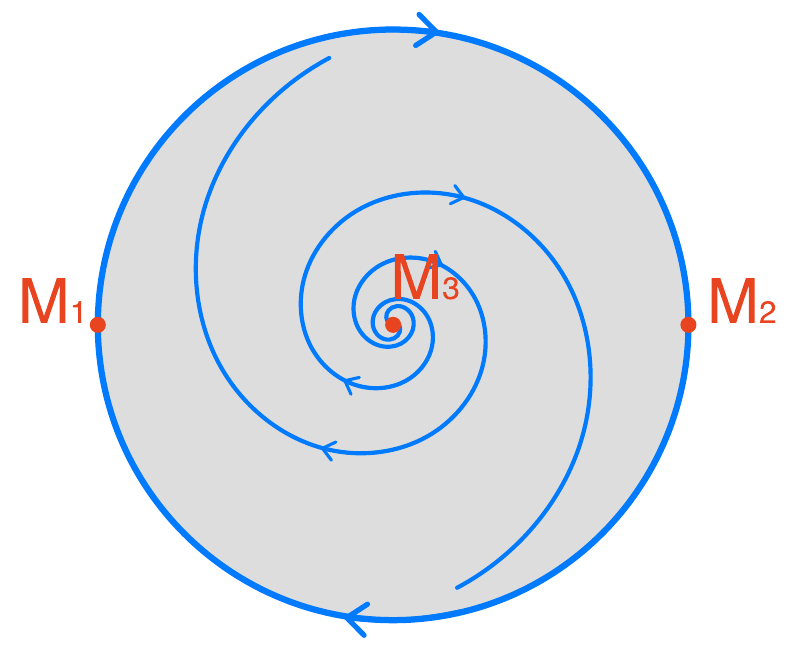}
    \caption{Four examples of flows on the unit disk $D$ in $\RR^2$. Stationary points are indicated in red.
        In all examples $\{D\}$ is a Morse decomposition, a trivial one. 
        Top left: The only  Morse predecomposition is the trivial one $\{D\}$. It is saturated. 
        Top right:  Also $\{M_1\}$ is a  Morse predecomposition. It is not saturated and $M_1$ is not isolated.
        Bottom left:  Also $\{M_1,M_2\}$ is a  Morse predecomposition. It is not saturated but both $M_1$ and $M_2$ are isolated.
        Bottom right: Also $\{M_1,M_2,M_3\}$ is a  Morse predecomposition. It is saturated, hence $M_1$, $M_2$ and $M_3$ are isolated.
    }
    \label{fig:fourDisks}
\end{figure}

\subsection{Saturated invariant sets and Morse predecompositions.}

We say that an invariant subset $T$ of an invariant set $S$ is {\em saturated in $S$} if $\im\gamma\subset T$
for every homoclinic connection $\gamma$ from $T$ to $T$ in $S$.
We say that a Morse predecomposition $\cM$ is {\em saturated} if every $M\in\cM$ is saturated. 

\begin{ex}
\label{ex:disk2}
Consider the flow indicated in Figure~\ref{fig:fourDisks}(top right). It admits Morse predecomposition $\{M_1\}$
consisting of the equilibrium $M_1$ shown in red. Clearly, $M_1$ is not saturated. Therefore, this Morse predecomposition 
is not saturated. Unlike Example~\ref{ex:disk1} the flow defined Conley model consists of a single vertex and necessarily also a loop, 
$G = \left(\setof{v}, \setof{(v,v)} \right)$.
\exend
\end{ex}

\begin{prop}
\label{prop:Morse-sets}
Given a pre-Morse set $M$ in a saturated Morse predecomposition $\cM$,
every compact neighborhood $N$ of $M$ disjoint from all other pre-Morse sets and 
such that $M\subset\inter N$ is an isolating neighborhood isolating $M$. 
In particular, every pre-Morse set in a saturated Morse predecomposition is an isolated invariant set.
\end{prop}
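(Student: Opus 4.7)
The goal is to prove two things: that $\Inv(N) = M$, and that $\Inv(N) \subset \Int N$. Since $M \subset \Int N$ is assumed, the second assertion will follow automatically once the first is established. The containment $M \subset \Inv(N)$ is immediate from the invariance of $M$ together with $M \subset N$, so the real content is the reverse inclusion $\Inv(N) \subset M$, which I would prove by contradiction, and then the ``in particular'' clause follows because every pre-Morse set $M$ admits at least one such $N$: $M$ is closed, the other pre-Morse sets are closed and disjoint from $M$, and $X$ is locally compact, so such a compact neighborhood can always be produced.

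Suppose for contradiction that $x \in \Inv(N) \setminus M$ and let $\gamma:\RR \to N$ be a full solution through $x$. Since $N$ is compact, Proposition~\ref{prop:limits_are_chain_recurrent} guarantees that $\alpha(\gamma)$ and $\omega(\gamma)$ are non-empty, compact, connected, invariant, chain recurrent subsets of $N$. The first step I would carry out is an auxiliary observation derived from Definition~\ref{defn:predecflow}: every non-empty closed invariant subset $T\subset S$ meets $\bigcup\cM$, because any full solution inside $T$ must be a link from some $M_p$ to some $M_q$ and its limit sets lie in $T$. Applying this to $\alpha(\gamma)$ and $\omega(\gamma)$ and using the hypothesis that $N$ is disjoint from all other pre-Morse sets, I conclude $\alpha(\gamma)\cap M \neq \emptyset$ and $\omega(\gamma)\cap M \neq \emptyset$, so $\gamma$ is a homoclinic link of $M$.

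The crux of the argument is to upgrade ``link'' to ``connection'', i.e.\ to show $\alpha(\gamma)\subset M$ and $\omega(\gamma)\subset M$; once this is done, saturation of $M$ forces $\im\gamma \subset M$, contradicting $x \notin M$. My plan for the upgrade is a minimal-set argument. By Zorn's lemma any non-empty closed invariant subset of $\omega(\gamma)$ contains a minimal such set $T$; by the auxiliary observation $T$ meets $M$, and by minimality $T\cap M = T$, so $T\subset M$. To propagate this from minimal subsets to the whole of $\omega(\gamma)$, I would select, again by Zorn's lemma applied to the family of closed invariant subsets of $\omega(\gamma)$ not contained in $M$, a minimal such set $C^*$ (if it exists). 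For any $z\in C^*\setminus M$, the orbit $\delta$ through $z$ is contained in $C^*$, and both $\alpha(\delta)$ and $\omega(\delta)$ are closed invariant subsets of $C^*$; by minimality of $C^*$, each is either contained in $M$ or equals $C^*$. In the favorable sub-case where both are contained in $M$, $\delta$ is a homoclinic connection of $M$ and saturation yields $z\in \im\delta\subset M$, contradicting $z\notin M$; the symmetric argument applied to $\alpha(\gamma)$ then closes the main proof.

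The main technical obstacle is the residual sub-case in which $\alpha(\delta)=C^*$ or $\omega(\delta)=C^*$, meaning the orbit through $z$ is one-sidedly dense in $C^*$. I expect this case to be excluded by combining the connectedness and chain recurrence of $C^*$ with the absence of any pre-Morse set other than $M$ inside $N$: one should be able to produce, from such a dense one-sided orbit, an auxiliary full solution whose $\alpha$ and $\omega$ both land in $M$, and then invoke saturation to collapse $C^*$ into $M$, contradicting the choice of $C^*$. This step is the one I would spend the most effort on; everything else is a bookkeeping application of Proposition~\ref{prop:limits_are_chain_recurrent}, the Morse predecomposition property, and the disjointness hypothesis on $N$.
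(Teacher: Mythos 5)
Your reduction to showing $\Inv N\subset M$, and the first half of your argument (the orbit through a hypothetical $x\in\Inv N\setminus M$ has both limit sets contained in $N$, hence meeting $M$ and no other pre-Morse set), is exactly the paper's argument. Where you diverge is instructive: at this point the paper simply invokes saturation of $M$ to conclude $x\in M$, whereas you correctly observe that saturation, as defined, applies only to homoclinic \emph{connections} (those with $\alpha(\gamma)\subset M$ and $\omega(\gamma)\subset M$), not to homoclinic links, so that an upgrade from link to connection is required before saturation can be used. You then attempt to supply this upgrade and cannot finish. The honest verdict is that your proposal is not a complete proof: the ``residual sub-case'' you flag is a genuine gap, not a technicality to be polished later.

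Moreover, the gap cannot be closed by the strategy you sketch, for two reasons. First, your second application of Zorn's lemma is invalid: the family of closed invariant subsets of $\omega(\gamma)$ \emph{not} contained in $M$ is not closed under intersections of descending chains (the intersection may drop into $M$), so a minimal element $C^*$ need not exist, and the parenthetical ``if it exists'' leaves the complementary case open. Second, and more fundamentally, the intermediate statement your plan requires --- that a compact, connected, invariant, chain recurrent set $C$ with $C\cap\bigcup\cM\subset M$ and $M$ saturated must satisfy $C\subset M$ --- is false. Consider the one-stationary-point variant of Example~\ref{ex:tor_irr}: an irrational flow on the torus slowed to a halt at a single point $A$, with $\cM=\setof{\{A\}}$. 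Every non-constant solution has at least one limit set equal to the whole torus, so $\{A\}$ is saturated and $\cM$ is a Morse predecomposition; yet $C=X$ is connected, chain recurrent, invariant, meets $\bigcup\cM$ only in $\{A\}$, and is not contained in $\{A\}$, and there is no non-constant solution with both limit sets inside $\{A\}$ for saturation to act on. Hence no argument that manufactures a homoclinic connection of $M$ inside $C^*$ and then collapses $C^*$ into $M$ can work in general; whatever closes this step must exploit the disjointness of $N$ from the \emph{other} pre-Morse sets more strongly than either your argument or the paper's one-line appeal to saturation does (note that in the example above, taking $N=X$ even falsifies the stated conclusion, since $\Inv X=X\neq\{A\}$). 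You have correctly located the single delicate point of this proposition --- something the paper's own proof glosses over --- but you have not resolved it.
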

\begin{proof}
    Fix a pre-Morse set $M$ in $\cM$. 
    Let $N$ be a compact neighborhood of $M$ which is disjoint from all other pre-Morse sets in $\cM$.
    We claim that $\Inv N=M$. Clearly, $M\subset \Inv N$. To see the opposite inclusion assume to the contrary that there exists an $x\in\Inv N\setminus M$. 
    Then $x\in S\setminus\bigcup\cM$ and by the definition of Morse predecomposition we can find $M_p,M_q\in\cM$ such that 
    $\alpha(x)\cap M_p\neq\emptyset$, and $\omega(x)\cap M_q\neq\emptyset$.
    Since $x\in\Inv N$, we have $\varphi(x, \RR)\subset N$ and in consequence, $\alpha(x)\cup\omega(x)\subset N$.
    This implies $M_p=M=M_q$, because $N$ is disjoint from pre-Morse sets other than $M$.
    If $M$ is empty then by Proposition \ref{prop:limits_are_chain_recurrent} we get a contradiction.
    Otherwise, since every pre-Morse set in $\cM$ is saturated, we conclude that $x\in M$. 
    This is a contradiction proving that $N$ is an isolating neighborhood isolating $M$.
\qed
\end{proof}

\begin{ex}
\label{ex:disk3}
Consider the flow in Figure~\ref{fig:fourDisks}(bottom left). It admits a Morse predecomposition $\{M_1,M_2\}$
consisting of the equilibriums $M_1$ and $M_2$ shown in red. Here, $M_2$ is saturated but $M_1$ is not saturated. 
Therefore, this Morse predecomposition is not saturated but both pre-Morse sets are isolated invariant sets. 
The flow defined Conley model 
\[
G = \left(\setof{v_1,v_2}, \setof{(v_1,v_1)} \right)
\]
consists of two vertices and two edges. Exactly one of these edges is a loop. 

\exend
\end{ex}

\begin{ex}
\label{ex:disk4}
Consider the flow in Figure~\ref{fig:fourDisks}(bottom right). It admits Morse predecomposition $\{M_1,M_2,M_3\}$
consisting of the equilibriums $M_1$, $M_2$ and $M_3$ shown in red. This Morse predecomposition is saturated.
In particular, all pre-Morse sets are isolated invariant sets. 
The flow defined Conley model 
\[
G = \left(\setof{v_1,v_2,v_3}, \setof{(v_1,v_2),(v_2,v_1),(v_3,v_1),(v_3,v_2)} \right)
\]
consists of three vertices and four edges. None of these edges is a loop. 
\exend
\end{ex}

\begin{ex}
\label{ex:tor_irr}
    Take as the phase space $X$ a torus represented as the quotient of the square $[0,1]\times[0,1]$ in the complex plane through the relation identifying its left edge with the right edge and its bottom edge with the top edge.
    We can identify the equivalence classes with points in $[0,1)\times[0,1)$.
    Consider the flow $\varphi$ on $X$ (see Figure~\ref{fig:torus_irrational_rotation} left) induced by the vector field
    \[
        v:X\ni z\rightarrow k(z)w(z)\in X
    \]
    where $w(z):=1+\theta i$ with $\theta$ an irrational number and
    $k(z):=\min\{|z' - z|\mid z\in\{A,B,C,D\}$ where
    $A=\frac{i}{8}$, $B=\frac{3i}{8}$, $C=\frac{5i}{8}$, and $D=\frac{7i}{8}$.
    Observe that $v$ is a continuous vector field with four stationary points $A$, $B$, $C$, and $D$.
    To understand the features of full solutions of $\varphi$ it is convenient to look into the flow $\bar{\varphi}$
    on $X$ induced by $w$.
    This flow may be considered a suspension of an irrational rotation on a circle. 
    It is well known (for instance see \cite[Theorem 3.2.3]{Silva2008}) 
    that trajectories of an irrational rotation are dense. In consequence, the alpha and omega limit sets
    of a full solution of $\bar{\varphi}$ coincide with the torus.
    Since vector fields $v$ and $w$ differ only by a scalar factor, a full solution $\gamma$ of $\varphi$ is a rescaled 
    full solution of $\bar{\varphi}$. It follows that an alpha or omega limit set of $\gamma$ is either the whole torus $X$
    or one of the four stationary points. 
    Therefore, the only Morse decomposition is the trivial one, consisting of a single Morse set $X$; while the family
        $\cM:=\setof{\{A\},\{B\},\{C\},\{D\}}$ is a Morse predecomposition.
    However, it follows that we can find a link between every pair of sets in $\cM$.
    Thus, the digraph induced by $\cM$ is a clique (see Figure~\ref{fig:torus_irrational_rotation} right).
    Moreover, notice that $\cM$ is saturated.
\exend
\end{ex}

\begin{figure}
  \includegraphics[width=0.50\textwidth]{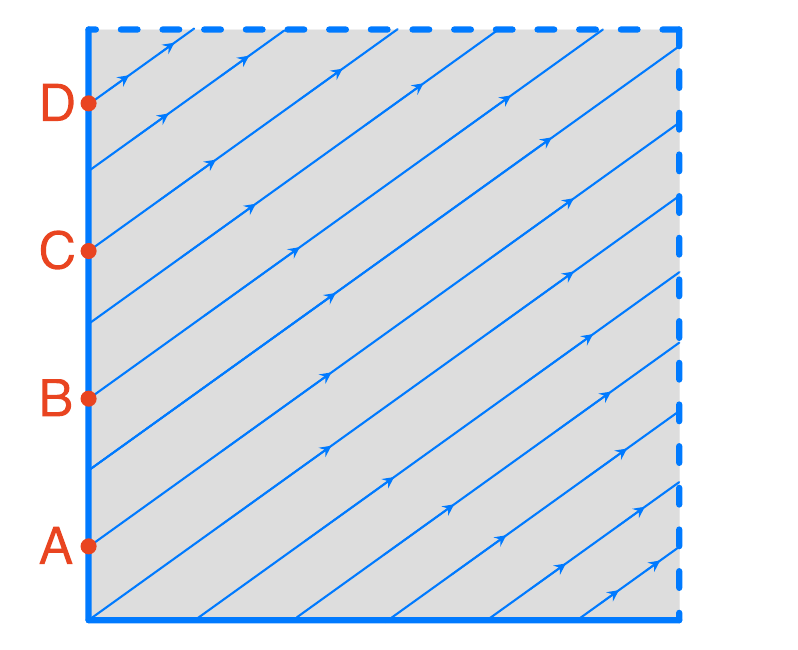}
  \raisebox{0.15\height}{\includegraphics[width=0.30\textwidth]{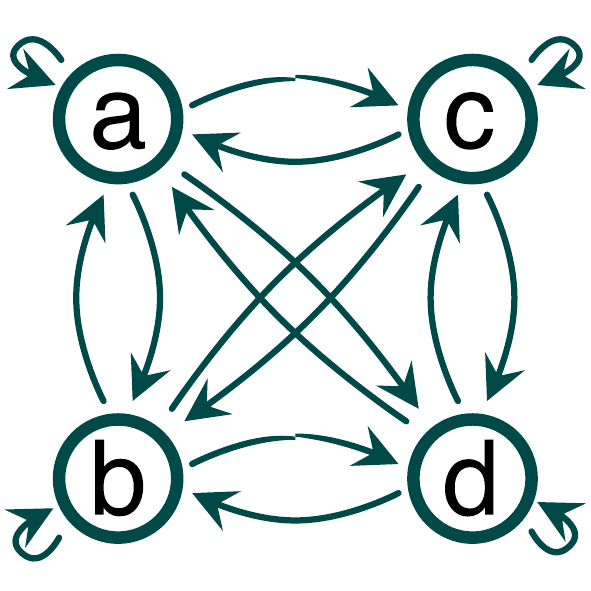}}
  \caption{Left: an irrational rotation on a torus slowing down around four stationary points $A$, $B$, $C$, and $D$.
  Note that there are no heteroclinic nor homoclinic connections between the points. 
  The torus has a Morse predecomposition $\cM:=\setof{\{A\},\{B\},\{C\},\{D\}}$.
  There is a link between any two of these points.
    Right: flow-defined Conley model of $\cM$.
  }
  \label{fig:torus_irrational_rotation}
\end{figure}

Given a Morse predecomposition $\cM=\{M_p\mid p\in\VSet\}$ of $S$ and  
$C\subset S$ define
\[
  \VSet_C := \VSet_C(\cM):=\{p\in\VSet\ |\ C\cap M_p\neq\emptyset\}.
\]

\begin{lem}
\label{lem:minimal-element}
Let $\cM$ be a Morse predecomposition of an invariant set $S$
and let $C\subset S$ be a closed, connected and chain recurrent subset of $S$.
If $\VSet_C$ admits a minimal element $r$ with respect to an admissible preorder $\leq$ and  $M_r$ is saturated in $S$,
then $C\subset M_r$. In particular, $\VSet_C=\{r\}$. Moreover, the conclusion follows if $\cM$ allows for an admissible partial order,
because every finite set has a minimal element with respect to a poset. 
\end{lem}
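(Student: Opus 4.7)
The plan is to argue by contradiction: I will assume $C\not\subset M_r$ and eventually produce a homoclinic connection to $M_r$ whose image leaves $M_r$, contradicting saturation. Since $M_r$ is saturated, Proposition~\ref{prop:Morse-sets} gives that $M_r$ is an isolated invariant subset of $S$, so $A:=M_r\cap C$ is a non-empty (as $r\in\VSet_C$), isolated invariant, proper subset of $C$. Applying Theorem~\ref{thm:exist_alpha_omega_point} to $C$ and $A$, I will extract a point $x\in C\setminus A$ with $\alpha(x)\subset A\subset M_r$.

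Next, I will exploit the Morse predecomposition property applied to the full solution $\gamma_x$ through $x$, which lies in $C$ by invariance. It is a link from some $M_p$ to some $M_q$; disjointness of the pre-Morse sets together with $\alpha(\gamma_x)\subset M_r$ forces $p=r$, and since $\omega(\gamma_x)\subset C$ one has $q\in\VSet_C$. Admissibility of $\leq$ then yields $q\leq r$, and the strict reading of minimality of $r$ (which is automatic under the partial-order hypothesis flagged in the ``Moreover'' part of the statement) forces $q=r$. The same reasoning applied to every index $q'$ with $M_{q'}\cap\omega(\gamma_x)\neq\emptyset$ shows that $\omega(\gamma_x)$ meets $M_r$ and no other pre-Morse set.

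The hard part will be to upgrade this ``meets only $M_r$'' conclusion to the inclusion $\omega(\gamma_x)\subset M_r$. To handle it, I will re-apply the preceding reasoning to $\omega(\gamma_x)$, which by Proposition~\ref{prop:limits_are_chain_recurrent} is itself a non-empty, closed, connected, invariant, chain recurrent subset of $C$ on which only $M_r$ appears. A Zorn's-lemma argument over the family of closed connected invariant chain recurrent subsets of $C$ that meet $M_r$ but are not contained in $M_r$ will either show this family is empty (yielding the inclusion directly) or produce an inclusion-minimal element $T$; inside such a $T$, Theorem~\ref{thm:exist_alpha_omega_point} supplies both a solution with $\alpha$-limit in $M_r$ and one with $\omega$-limit in $M_r$, whose remaining limits are forced by minimality of $T$ either into $M_r$ (giving a homoclinic connection to $M_r$ with image outside $M_r$, contradicting saturation) or to equal $T$, and a further analysis using an isolating neighborhood of $M_r$ together with the fact that $T$ is invariant but not contained in $\Inv N=M_r$ produces the desired contradiction.

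Once $\omega(\gamma_x)\subset M_r$ has been established, $\gamma_x$ is a homoclinic connection from $M_r$ to itself, so saturation of $M_r$ forces $\im \gamma_x\subset M_r$. This contradicts $x\in\im \gamma_x\setminus M_r$, so $C\subset M_r$; the equality $\VSet_C=\{r\}$ then follows from pairwise disjointness of the pre-Morse sets.
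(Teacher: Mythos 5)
Your proposal tracks the paper's proof almost exactly through its first two thirds: the contradiction hypothesis $C\not\subset M_r$, the observation that $A:=C\cap M_r$ is a non-empty, proper, isolated invariant subset of $C$, the application of Theorem~\ref{thm:exist_alpha_omega_point} to produce $x\in C\setminus M_r$ with $\alpha(x)\subset A\subset M_r$, and then the link/admissibility/minimality argument forcing every index meeting $\alpha(x)$ or $\omega(x)$ to equal $r$. (One small citation mismatch: you invoke Proposition~\ref{prop:Morse-sets} to get isolation of $M_r$, but that proposition assumes the whole predecomposition is saturated, while the lemma only assumes $M_r$ is; its proof uses saturation of the single set only, so this is repairable, and the paper itself simply takes an isolating neighborhood $N$ for $M_r$ and works with $C\cap N$.)

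The divergence, and the gap, is your ``hard part.'' At the point where $\alpha(x)\subset M_r$ and $\omega(x)\cap M_q\neq\emptyset$ forces $q=r$, the paper concludes in one line: the solution through $x$ is homoclinic to $M_r$, so saturation of $M_r$ gives $x\in M_r$, a contradiction. You are right that, read literally, saturation is phrased in terms of homoclinic \emph{connections}, which require $\omega(x)\subset M_r$ rather than merely $\omega(x)\cap M_r\neq\emptyset$; but the Zorn's-lemma detour you propose to supply this inclusion does not work. To extract an inclusion-minimal element of the family of closed, connected, invariant, chain recurrent subsets of $C$ that meet $M_r$ but are not contained in it, every descending chain in that family must have a lower bound \emph{in the family}; the intersection of such a chain can perfectly well be contained in $M_r$ (nested sets shrinking onto $M_r\cap C$), and chain recurrence of the intersection is not automatic either, so minimal elements need not exist. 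Moreover the concluding step of your sketch (``a further analysis using an isolating neighborhood of $M_r$ \ldots\ produces the desired contradiction'') is a placeholder rather than an argument. As written, then, the proposal does not close: either finish the way the paper does, by applying saturation of $M_r$ directly to the homoclinic solution through $x$ once $p=q=r$ is established, or, if you want the stronger inclusion $\omega(x)\subset M_r$ first, you need a genuinely different argument from the one sketched.
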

\begin{proof}
  Assume to the contrary that the inclusion $C\subset M_r$ does not hold. 
  Then $A:=C\cap M_r\varsubsetneq C$. 
  We claim that $A$ is an isolated invariant subset of $C$. 
  Clearly, $A=C\cap M_r$ is invariant as an intersection of invariant sets. 
  To see that $A$ is isolated in $C$ take $N$, an isolating neighborhood for $M_r$. 
  We have 
  \[
    \Inv(C\cap N)\subset\Inv C\cap \Inv N=C\cap M_r=A
  \] 
  and
  \[
    A=C\cap M_r\subset C\cap \Int N\subset \Int_C(C\cap N),
  \]
  which shows that $C\cap N$ is an isolating neighborhood for $A$ in $C$. 
  This proves that $A$ is an isolated invariant set in $C$. 
  By Theorem~\ref{thm:exist_alpha_omega_point} we can find an $x\in C\setminus A=C\setminus M_r$ such that $\alpha(x)\subset A\subset M_r$. 
  We get $\omega(x)\subset C$, because $C$ is invariant and closed. 
  By the definition of Morse predecomposition there exist $p,q\in\VSet$, such that $\alpha(x)\cap M_p\neq\emptyset$
    and $\omega(x)\cap M_q\neq\emptyset$. 
  Since $\alpha(x)\subset M_r$ and pre-Morse sets are mutually disjoint, we must have $p=r$.
  We also get  $C\cap M_q\supset \omega(x)\cap M_q\neq\emptyset$ which implies $q\in\VSet_C$. 
  Since $q\leq p=r$ and $r$ is minimal in $\VSet$, we get $q=r=p$.  
  Hence, $x\in M_r$, because $M_r$ is saturated, a contradiction. 
  Thus, $C\subset M_r$ is proved. 
  Since the elements of $\cM$ are mutually disjoint, we also get $\VSet_C=\{r\}$.
  \qed
\end{proof}

\begin{prop}
\label{prop:saturated-M-pre}
Let $\cM$ be a Morse predecomposition.
$\cM$ is saturated if and only if for every non-trivial link $\gamma$ 
there are $p,q\in\VSet$ such that $p\neq q$ and $\gamma$ is a link from $M_p$ to $M_q$.
\end{prop}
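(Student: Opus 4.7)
The plan is to prove the two implications separately, with the backward direction being almost immediate from the mutual disjointness of the pre-Morse family and the forward direction relying on Lemma~\ref{lem:minimal-element} to upgrade ``$\alpha(\gamma)$ meets $M_p$'' into ``$\alpha(\gamma)\subset M_p$''.

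For the ($\Leftarrow$) implication, I would fix $p\in\VSet$ and an arbitrary homoclinic connection $\gamma$ from $M_p$ to $M_p$, and assume toward a contradiction that $\im\gamma\not\subset M_p$, so that $\gamma$ is a non-trivial homoclinic link. The hypothesis then supplies indices $r\neq s$ with $\gamma$ a link from $M_r$ to $M_s$, i.e., $\alpha(\gamma)\cap M_r\neq\emptyset$ and $\omega(\gamma)\cap M_s\neq\emptyset$. However, $\alpha(\gamma)\subset M_p$, $\omega(\gamma)\subset M_p$, and the pairwise disjointness of the family $\{M_p\mid p\in\VSet\}$ force $r=p=s$, contradicting $r\neq s$. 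Hence every homoclinic connection from $M_p$ to $M_p$ is trivial, so $M_p$ is saturated; since $p$ was arbitrary, $\cM$ is saturated.

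For the ($\Rightarrow$) implication, assume $\cM$ is saturated and let $\gamma$ be a non-trivial link. Suppose, for a contradiction, that there exist no indices $p\neq q$ making $\gamma$ a link from $M_p$ to $M_q$. A short disjointness argument then forces the index sets $\VSet_{\alpha(\gamma)}$ and $\VSet_{\omega(\gamma)}$ to be one common singleton $\{p\}$: if $\alpha(\gamma)$ met two distinct pre-Morse sets, or if $\VSet_{\alpha(\gamma)}$ and $\VSet_{\omega(\gamma)}$ differed, one could immediately extract the forbidden pair $r\neq s$. By Proposition~\ref{prop:limits_are_chain_recurrent}, $\alpha(\gamma)$ and $\omega(\gamma)$ are non-empty, closed, connected, and chain recurrent subsets of $S$. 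Working with the admissible flow-induced preorder $\leq_\cM$, the singleton $\{p\}=\VSet_{\alpha(\gamma)}$ trivially has $p$ as a minimal element, and $M_p$ is saturated by assumption; Lemma~\ref{lem:minimal-element} applied with $C=\alpha(\gamma)$ therefore yields $\alpha(\gamma)\subset M_p$, and symmetrically $\omega(\gamma)\subset M_p$. Thus $\gamma$ is a homoclinic connection from $M_p$ to $M_p$, and saturation of $M_p$ gives $\im\gamma\subset M_p$, making $\gamma$ trivial — the desired contradiction.

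The main obstacle lies in the ($\Rightarrow$) direction, specifically in passing from ``$\alpha(\gamma)$ meets exactly one pre-Morse set $M_p$'' to ``$\alpha(\gamma)$ is contained in $M_p$''. This is exactly the content of Lemma~\ref{lem:minimal-element}, which leverages chain recurrence of limit sets together with saturation of $M_p$ to rule out the possibility of $\alpha(\gamma)$ escaping $M_p$. Once this tool is available, the rest of both directions is just careful bookkeeping with the disjointness of the pre-Morse family.
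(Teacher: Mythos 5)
Your proof is correct and takes essentially the same route as the paper's: the backward direction is the same disjointness argument (a non-trivial homoclinic connection to $M_p$ would have to be a link between two distinct pre-Morse sets, which is impossible since its limit sets lie in $M_p$), and the forward direction likewise reduces to excluding $\VSet_{\alpha(\gamma)}=\VSet_{\omega(\gamma)}=\{p\}$ by combining Lemma~\ref{lem:minimal-element} with saturation of $M_p$. Your write-up is, if anything, slightly more explicit than the paper's in spelling out why the failure of the conclusion forces both index sets to be a common singleton.
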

\proof
Assume $\cM$ is a saturated Morse predecomposition and $\gamma$ is a non-trivial link.
Then, by the definition of Morse predecomposition we know that $\VSet_{\alpha(\gamma)}\neq\emptyset\neq\VSet_{\omega(\gamma)}$.
We claim that there is no $r\in\VSet$ such that $\VSet_{\alpha(\gamma)}=\VSet_{\omega(\gamma)}=\{r\}$.
Indeed, if such an $r\in\VSet$ exists, then by Lemma~\ref{lem:minimal-element} we get 
    $\alpha(\gamma)\subset M_r$ and $\omega(\gamma)\subset M_r$.
Since $M_r$ is saturated we conclude that $\im\gamma\subset M_r$, which contradicts the non-triviality of $\gamma$. 
Therefore, 
$\VSet_{\alpha(\gamma)}\cup\VSet_{\omega(\gamma)}$ contains at least two different elements, among which we can find $p$ and $q$ such that $\gamma$ is a link from a $p$ to a $q$ and $p\neq q$. 

To prove the opposite implication suppose that $\cM$ is not saturated.
It means that we can find $r\in\VSet$ with a full solution $\gamma$ such that $\alpha(\gamma)\cup\omega(\gamma)\subset M_r$ and $\im\gamma\not\subset M_r$. 
It follows that $\VSet_{\alpha(\gamma)}\cup\VSet_{\omega(\gamma)}=\{r\}$.
On the other hand, assumptions implies that $\gamma$ is a link from $M_p$ to $M_q$ for some $p\neq q$ in $\VSet$.
Therefore, $\VSet_{\alpha(\gamma)}\cup\VSet_{\omega(\gamma)}$ contains at least two elements, which gives a contradiction.
\qed

\subsection{Morse decompositions}
We now present a definition of Morse decomposition for flows formulated in terms of Morse predecomposition.
However, by Theorem~\ref{thm:morse-decomposition} below, this definition is equivalent to the classical definition (Definition~\ref{defn:flow-Morse-decomposition-orig}).
\begin{defn}
A Morse predecomposition  $\cM=\{M_p\mid p\in\VSet\}$ is called a {\em Morse decomposition} if it is saturated
and among admissible preorders there is a partial order. 
Then, by Proposition~\ref{prop:po-extension}, the flow induced preorder is a partial order. 
\end{defn}

\begin{thm}
\label{thm:morse-decomposition}
Let $\cM=\{M_p\mid p\in\VSet\}$ be an indexed family of  mutually disjoint subsets of $S$.
Then, the following conditions are mutually equivalent.
\begin{itemize}
   \item[(i)] The family $\cM$ is a Morse decomposition of $\cS$.
   \item[(ii)] Each $M_p$ is a closed, saturated invariant subset of $S$ and 
   $\VSet$ admits a partial order $\leq$ such that for every full solution $\gamma$ in $S$ there exist $p,q\in\VSet$ 
   satisfying $p\geq q$, $\alpha(\gamma)\subset M_p$ and $\omega(\gamma)\subset M_q$.
   \item[(iii)] Each $M_p$ is a closed, isolated invariant subset of $S$ and 
   $\VSet$ admits a partial order $\leq$ such that for every full solution $\gamma$ in $S$ 
   through an $x\not\in\bigcup\setof{M_p\mid p\in\VSet}$
   there exist $p,q\in\VSet$ 
   satisfying $p> q$, $\alpha(\gamma)\subset M_p$ and $\omega(\gamma)\subset M_q$.
\end{itemize}
In particular, every Morse set in a Morse decomposition is saturated and isolated. 
\end{thm}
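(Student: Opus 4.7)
The plan is to establish the chain of implications $(\text{i})\Rightarrow(\text{ii})\Rightarrow(\text{iii})\Rightarrow(\text{i})$, following closely the structure of the combinatorial analogue Theorem~\ref{thm:mvf-morse-decomposition}, with classical dynamical tools replacing the combinatorial ones.

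For $(\text{i})\Rightarrow(\text{ii})$, I would unfold the definition: a Morse decomposition is a saturated Morse predecomposition admitting an admissible partial order. Proposition~\ref{prop:Morse-sets} then guarantees that each saturated pre-Morse set is isolated invariant, hence in particular closed. Given a full solution $\gamma$ in $S$, Proposition~\ref{prop:limits_are_chain_recurrent} shows that $\alpha(\gamma)$ and $\omega(\gamma)$ are non-empty, closed, connected, and chain recurrent. Applying Lemma~\ref{lem:minimal-element} to each of these sets, via the part of the lemma covering the partial-order case, places $\alpha(\gamma)\subset M_p$ and $\omega(\gamma)\subset M_q$ for some $p,q\in\VSet$, and admissibility then yields $q\leq p$.

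For $(\text{ii})\Rightarrow(\text{iii})$, I would first observe that the condition in (ii) already makes $\cM$ a Morse predecomposition, because limit sets in $S$ are non-empty by Proposition~\ref{prop:limits_are_chain_recurrent}. Since each $M_p$ is saturated by hypothesis, Proposition~\ref{prop:Morse-sets} gives isolation. To strengthen $p\geq q$ to $p>q$ for a solution $\gamma$ through $x\notin\bigcup\cM$, I would argue by contradiction: if $p=q$, then $\alpha(\gamma)\cup\omega(\gamma)\subset M_p$ exhibits $\gamma$ as a homoclinic connection of $M_p$, and saturation forces $\im\gamma\subset M_p$, contradicting $x\notin\bigcup\cM$.

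For $(\text{iii})\Rightarrow(\text{i})$, the key observation is a rigidity principle coming from invariance together with mutual disjointness of pre-Morse sets: if a full solution $\gamma$ meets some $M_r$ at even a single point $x$, then invariance of $M_r$ gives $\varphi(x,\RR)\subset M_r$, hence $\im\gamma\subset M_r$ and $\gamma$ is a trivial link from $M_r$ to $M_r$. Consequently, every non-trivial link avoids $\bigcup\cM$ entirely, so (iii) applies and produces a strict inequality. This single observation, used three times, simultaneously shows that $\cM$ is a Morse predecomposition (trivial links account for solutions meeting any $M_r$, (iii) accounts for the rest), that the partial order $\leq$ supplied by (iii) is admissible (for a non-trivial link the indices from (iii) must coincide with $p$ and $q$ by disjointness of pre-Morse sets meeting the limit sets), and that each $M_r$ is saturated (a non-trivial homoclinic connection would avoid $\bigcup\cM$, forcing $r>r$ via (iii)).

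The main delicate step, and really the only place where the flow setting demands care beyond the combinatorial argument, is this rigidity observation that a full solution either lies entirely in a single pre-Morse set or avoids them all; once it is in place, the rest of $(\text{iii})\Rightarrow(\text{i})$ and the whole chain of implications reduce to bookkeeping with the admissibility and saturation conditions.
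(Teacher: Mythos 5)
Your proof is correct and follows essentially the same route as the paper: the same chain $(\text{i})\Rightarrow(\text{ii})\Rightarrow(\text{iii})\Rightarrow(\text{i})$, with the same appeals to Proposition~\ref{prop:limits_are_chain_recurrent}, Proposition~\ref{prop:Morse-sets} and Lemma~\ref{lem:minimal-element}. The only (harmless) divergence is in $(\text{iii})\Rightarrow(\text{i})$: where the paper splits cases via connectedness of $\im\gamma$ together with compactness and mutual disjointness of the sets $M_r$, you use uniqueness of flow solutions through a point together with invariance of $M_r$ to conclude that a solution meeting any $M_r$ lies entirely in it --- both observations are valid and yield the same dichotomy.
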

\proof
Assume (i). Then $\cM$ is a saturated Morse predecomposition and the flow induced preorder $\leq_\cM$ is a partial order. 
In particular, each $M_p$ is closed, invariant and saturated. 
We claim that the partial order $\leq_\cM$ fulfills the requirements in condition (ii). 
To see this consider a full solution $\gamma$ in $S$.
By the definition of Morse predecomposition there exist $p,q\in\VSet$ such that  $\alpha(\gamma)\cap M_p\neq\emptyset$
and $\omega(\gamma)\cap M_q\neq\emptyset$. By the definition of flow induced preorder we get $p\leq_\cM q$
and from Lemma~\ref{lem:minimal-element} we conclude that $\alpha(\gamma)\subset M_p$ and $\omega(\gamma)\subset M_q$.
Hence, (ii) follows from (i).

Assume in turn (ii). Then, obviously, each $M_p$ is a closed, invariant subset of $S$.
By Proposition~\ref{prop:Morse-sets} each $M_p$ is also isolated.
We claim that the partial order $\leq$ given in (ii) fulfills the requirements in condition (iii). 
To see this consider a full solution $\gamma$ in $S$ through an $x\not\in\bigcup\setof{M_p\mid p\in\VSet}$.
By (ii) there exist $p,q\in\VSet$ satisfying $p\geq q$, $\alpha(\gamma)\subset M_p$ and $\omega(\gamma)\subset M_q$.
We cannot have $p=q$, because then $M_p=M_q$ is not saturated, contradicting (ii). Therefore, $p>q$, proving (iii).

Finally, assume (iii) and consider a full solution $\gamma$ in $S$.
In order to prove that $\cM$ is a Morse predecomposition, we have to show that $\gamma$ is a link from $M_p$ to $M_q$ for some $p,q\in\VSet$.
Consider first the case when $\im\gamma\subset\bigcup\setof{M_r\mid r\in\VSet}$.
Since $\im\gamma$ is connected and sets $M_r$ are compact and mutually disjoint, we must have $\im\gamma\subset M_r$ for some $r\in\VSet$.
Then $\alpha(\gamma)\cup\omega(\gamma)\subset\cl\im\gamma\subset M_r$. Therefore, $\gamma$ is a link from $M_r$ to $M_r$.
Consider now the case when $\im\gamma\not\subset\bigcup\setof{M_r\mid r\in\VSet}$.
Then there is an $x\in\im\gamma\setminus\bigcup\setof{M_r\mid r\in\VSet}$ and we get from (iii)
the existence of $p,q\in\VSet$ satisfying $p> q$, $\alpha(\gamma)\subset M_p$ and $\omega(\gamma)\subset M_q$.
In particular, $\gamma$ is a link from $M_p$ to $M_q$. This completes the proof that $\cM$ is a Morse predecomposition.
Clearly, $\leq$ is then an admissible partial order for $\cM$. We still need to prove that each $M_r\in\cM$ is saturated. 
To see this consider a full solution $\gamma$ such that $ \alpha(\gamma)\cup\omega(\gamma)\subset M_r$.
If $\im\gamma\not\subset M_r$, then also $\im\gamma\not\subset \bigcup\setof{M_r\mid r\in\VSet}$ and we get from (iii)
the existence of $p,q\in\VSet$ satisfying $p> q$, $\alpha(\gamma)\subset M_p$ and $\omega(\gamma)\subset M_q$.
However, Morse sets are mutually disjoint and limit sets are non-empty. Therefore $p=r=q$, a contradiction 
proving that $M_r$ is saturated.
\qed

\begin{thm}
\label{thm:saturated}
Assume $\cM=\{M_p\mid p\in\VSet\}$ is a saturated Morse predecomposition of $S$.
If there is a partial order among the admissible preorders, then $\cM$ is a Morse decomposition.
\end{thm}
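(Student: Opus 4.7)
The statement is almost a restatement of the definition of Morse decomposition given immediately before the theorem, which requires precisely (a) that $\cM$ be a saturated Morse predecomposition and (b) that among the admissible preorders there be a partial order. So at its most economical, the proof is a single line: the hypotheses match the definition verbatim, and Proposition~\ref{prop:po-extension} ensures that the flow-induced preorder $\leq_\cM$ is itself a partial order.

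To lend the argument more substance, and to confirm equivalence with the classical Definition~\ref{defn:flow-Morse-decomposition-orig}, I would verify condition (ii) of Theorem~\ref{thm:morse-decomposition}, since that theorem then immediately yields (i). Saturation of $\cM$ together with Proposition~\ref{prop:Morse-sets} guarantees that each $M_p$ is a closed, isolated invariant subset of $S$ which is saturated in $S$. It then remains to exhibit a partial order $\leq$ on $\VSet$ such that for every full solution $\gamma$ in $S$ there exist $p,q\in\VSet$ with $p\geq q$, $\alpha(\gamma)\subset M_p$, and $\omega(\gamma)\subset M_q$; for $\leq$ I would take the given admissible partial order (or equivalently $\leq_\cM$).

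Fix such a full solution $\gamma$. By the Morse predecomposition property, $\VSet_{\alpha(\gamma)}$ and $\VSet_{\omega(\gamma)}$ are non-empty, so since $\leq$ is a partial order on the finite set $\VSet$ each admits a minimal element, call them $p$ and $q$ respectively. By Proposition~\ref{prop:limits_are_chain_recurrent} the sets $\alpha(\gamma)$ and $\omega(\gamma)$ are non-empty, closed, connected, and chain recurrent; combined with the fact that $M_p$ and $M_q$ are saturated in $S$, this puts us in a position to invoke Lemma~\ref{lem:minimal-element}, which gives $\alpha(\gamma)\subset M_p$ and $\omega(\gamma)\subset M_q$. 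Finally, $\gamma$ is a link from $M_p$ to $M_q$, so admissibility of $\leq$ yields $q\leq p$. This verifies condition (ii) of Theorem~\ref{thm:morse-decomposition}, which completes the proof.

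I do not anticipate any substantive obstacle: the work has already been done in Proposition~\ref{prop:Morse-sets}, Proposition~\ref{prop:limits_are_chain_recurrent}, Lemma~\ref{lem:minimal-element}, and Theorem~\ref{thm:morse-decomposition}. The only care required is to pick minimal (rather than arbitrary) indices $p,q$ so that Lemma~\ref{lem:minimal-element} applies to the limit sets, which is why we need the admissible preorder to actually be a partial order on a finite set.
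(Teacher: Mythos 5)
Your proposal is correct and follows essentially the same route as the paper: the paper's own proof likewise observes that the hypotheses match the definition and then, for a link $\gamma$, uses Proposition~\ref{prop:limits_are_chain_recurrent} together with the existence of minimal elements of $\VSet_{\alpha(\gamma)}$ and $\VSet_{\omega(\gamma)}$ to invoke Lemma~\ref{lem:minimal-element} and conclude $\alpha(\gamma)\subset M_p$, $\omega(\gamma)\subset M_q$. The only cosmetic difference is that the paper obtains $p\neq q$ via Proposition~\ref{prop:saturated-M-pre} for non-trivial links, while you verify condition (ii) of Theorem~\ref{thm:morse-decomposition} directly; both are valid.
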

\begin{proof}
    Assume $\leq$ is an admissible partial order for $\cM$. 
    Consider a non-trivial link $\gamma$.
    By Proposition~\ref{prop:saturated-M-pre} there are $p,q\in\VSet$ such that 
        $q<p$ and $\alpha(\gamma)\cap M_p\neq\emptyset\neq\omega(\gamma)\cap M_q$.
    By Proposition~\ref{prop:limits_are_chain_recurrent} the limit sets
        $\alpha(\gamma)$ and $\omega(\gamma)$ are invariant, chain recurrent and connected. 
    It follows that $\VSet_{\alpha(\gamma)}\neq\emptyset\neq\VSet_{\omega(\gamma)}$.
    Therefore, both these sets admit minimal elements.
    From Lemma~\ref{lem:minimal-element} we get $\alpha(\gamma)\subset M_p$ and $\omega(\gamma)\subset M_q$.
    This proves that $\cM$ is actually a Morse decomposition.
    \qed
\end{proof}

\subsection{Conley predecompositions}

A set $A\subset\RR$ is \emph{right infinite} if 
\[
    \sup\setof{b-a\mid a,b\in\RR^+,\;[a,b]\subset A} = \infty.
\]
Similarly,  $A\subset\RR$ is \emph{left infinite} if 
\[
    \sup\setof{b-a\mid a,b\in\RR^-,\;[a,b]\subset A} = \infty.
\]
We say that $A\subset\RR$ is \emph{bi-infinite} if it is both left and right infinite.

\begin{defn}

A finite collection $\cN=\{N_v\mid v\in \VSetV\}$ of mutually disjoint isolating neighborhoods in $X$ 
is called a \emph{Conley predecom\-posi\-tion of} $S$ 
if for every $x\in X$, $\setof{t\in\RR\mid \varphi(t,x)\in \bigcup\{\inter N\mid N\in\cN \}}$ is bi-infinite.
\end{defn}

The \emph{associated digraph} $G_\cN$ has $\VSetV$ as its set of vertices and an edge from $v$ to $w$ if there is a partial solution $\rho:[a,b]\rightarrow S$ satisfying $\gamma(a)\in\inter N_v$ and $\gamma(b)\in\inter N_w$.

A Conley predecom\-posi\-tion $\cN=\{N_v\mid v\in \VSetV\}$ is \emph{inscribed} in Conley predecom\-posi\-tion $\cN'=\{N'_w\mid w\in \WW\}$
if for every $v\in \VSetV$ there exists a $\nu(v):=w\in W$ such that $N_v\subset N'_w$.
Then, we have a well defined map preserving the digraph induced preorders
$\nu:\VSetV\rightarrow \WW$.

We say that a Conley predecom\-posi\-tion $\cN$ is an \emph{approximation} of a Morse predecom\-posi\-tion $\cM=\{M_p\mid p\in\VSet\}$ if for every $p\in\VSet$ there is a $\mu(p):=v\in \VSetV$ such that $\inv N_v=M_p$.
Clearly, such a $v$ is uniquely determined by $p$, because if $M_p=\inv N_{v_i}$ for $i=1,2$ then $\emptyset\neq M_p=\inv N_{v_i}\subset \inter N_{v_i}$, which gives $v_1=v_2$.
Therefore, we have a well defined map $\mu:\VSet\rightarrow\VSetV$.
\begin{prop}
    Let $\cN=\{N_v\mid v\in\VSetV\}$ be a Conley predecom\-posi\-tion approximating Morse predecom\-posi\-tion $\cM=\{M_p\mid p\in\VSet\}$.
    Then $G_\cN$ is a Conley model for $\cM$ with map $\mu:\VSet\rightarrow \VSetV$ preserving the digraph induced orders given by $\mu(p):=v$, where $\inv N_v=M_p$.
\end{prop}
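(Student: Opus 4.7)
The plan is to reduce the claim to a single edge-preservation statement: if $(p,q)$ is an edge of $G_\cM$, then $(\mu(p),\mu(q))$ is an edge of $G_\cN$. Well-definedness of $\mu$ has already been established in the paragraph preceding the statement, so only monotonicity remains. By Section~\ref{sec:relations}, the induced preorders $\leq_{G_\cM}$ and $\leq_{G_\cN}$ are the smallest preorders extending the respective edge sets; since the $\mu$-pullback of any preorder on $\VSetV$ is automatically a preorder on $\VSet$, preservation of the induced preorders will follow from preservation of the edge relations together with the minimality of $\leq_{G_\cM}$.

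For the edge-preservation step, I would fix an edge $(p,q)$ of $G_\cM$, which by definition is witnessed by a non-trivial link $\gamma:\RR\to S$ with $\alpha(\gamma)\cap M_p\neq\emptyset\neq\omega(\gamma)\cap M_q$. Choose $x\in\alpha(\gamma)\cap M_p$ and $y\in\omega(\gamma)\cap M_q$, and write $v:=\mu(p)$, $w:=\mu(q)$, so that $M_p=\inv N_v\subset\inter N_v$ and $M_q=\inv N_w\subset\inter N_w$ by the isolating property of $N_v$ and $N_w$. Then $\inter N_v$ and $\inter N_w$ are open neighborhoods of $x$ and $y$, respectively. By the definition of the alpha limit set, one may choose $a$ arbitrarily negative with $\gamma(a)\in\inter N_v$; analogously one may choose $b$ arbitrarily positive with $\gamma(b)\in\inter N_w$. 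In particular $a<b$ can be arranged, and the restriction $\gamma|_{[a,b]}$ is then a partial solution in $S$ with $\gamma(a)\in\inter N_v$ and $\gamma(b)\in\inter N_w$, which is exactly the condition defining an edge from $v$ to $w$ in $G_\cN$.

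I do not anticipate any substantive obstacle: the argument is a straightforward unpacking of the definitions of a link, of an isolating neighborhood, and of the alpha/omega limit sets. The only minor point worth double-checking is the ordering $a<b$ of the selected times, but this is immediate because $a$ is drawn from the backward limit and $b$ from the forward limit of the same solution.
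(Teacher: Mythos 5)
Your proposal is correct and follows essentially the same route as the paper: take the non-trivial link witnessing an edge $(p,q)$ of $G_\cM$, use $M_p=\Inv N_{\mu(p)}\subset\inter N_{\mu(p)}$ (and likewise for $q$) to find times $a<b$ with $\gamma(a)\in\inter N_{\mu(p)}$ and $\gamma(b)\in\inter N_{\mu(q)}$, and read off an edge of $G_\cN$; your explicit reduction of preorder preservation to edge preservation via minimality of the induced preorder is a point the paper leaves implicit. The only item in the paper's proof you skip is the injectivity of $\mu$, which is not required by the definition of a Conley model and hence not needed for the stated claim.
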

\begin{proof}
   To see that $\mu$ is injective, observe that $\mu(p)=\mu(q)$ for some $p,q\in\VSet$
   implies $M_p=\Inv N_{\mu(p)}=\Inv N_{\mu(q)}=M_q$ which gives $p=q$.
   If $(p,q)$ is an edge in $G_\cM$, then there is a full solution $\gamma:\RR\to S$ such that $\alpha(\gamma)\cap M_p\neq\emptyset$
   and $\omega(\gamma)\cap M_q\neq\emptyset$. Hence, we can find $t,s\in\RR$, $t<s$ such that $\gamma(t)\in\inte N_{\mu(p)}$ and $\gamma(s)\in\inte N_{\mu(q)}$.
   This shows that there is also an edge in $G_\cN$ from $\mu(p)$ to $\mu(q)$, which proves that $\mu$ induces a relation preserving map.
\qed
\end{proof}

\begin{figure}
  \includegraphics[width=0.75\textwidth]{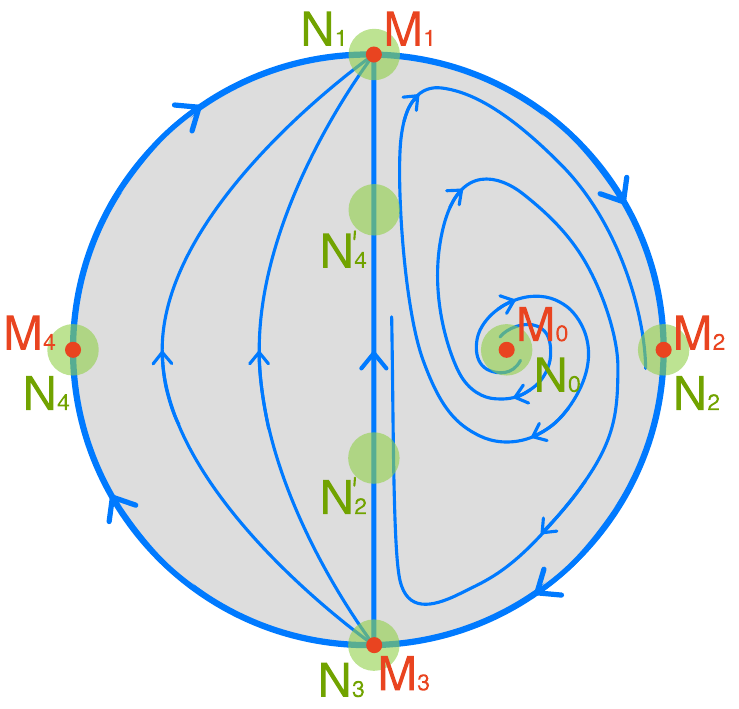}
  
  \includegraphics[width=0.45\textwidth]{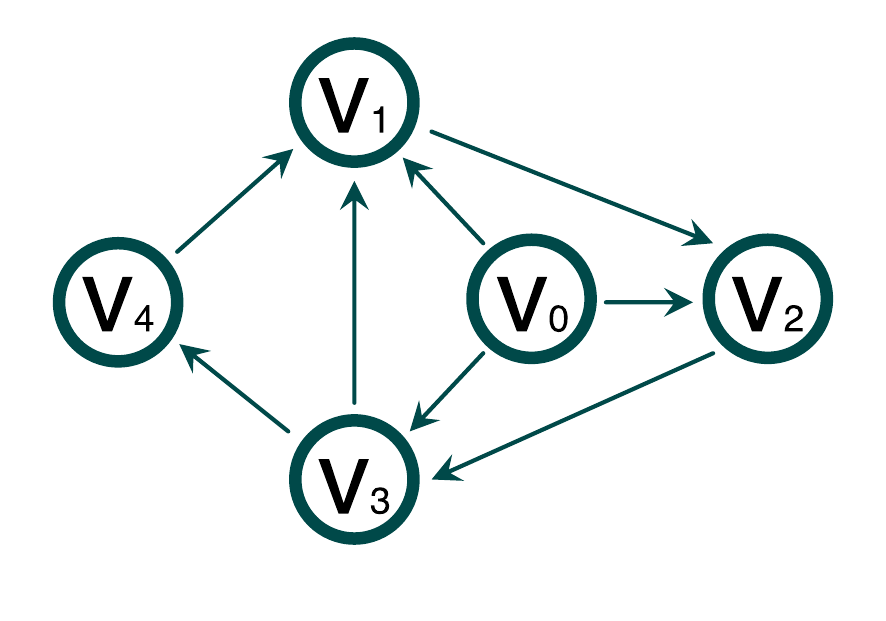}
  \includegraphics[width=0.45\textwidth]{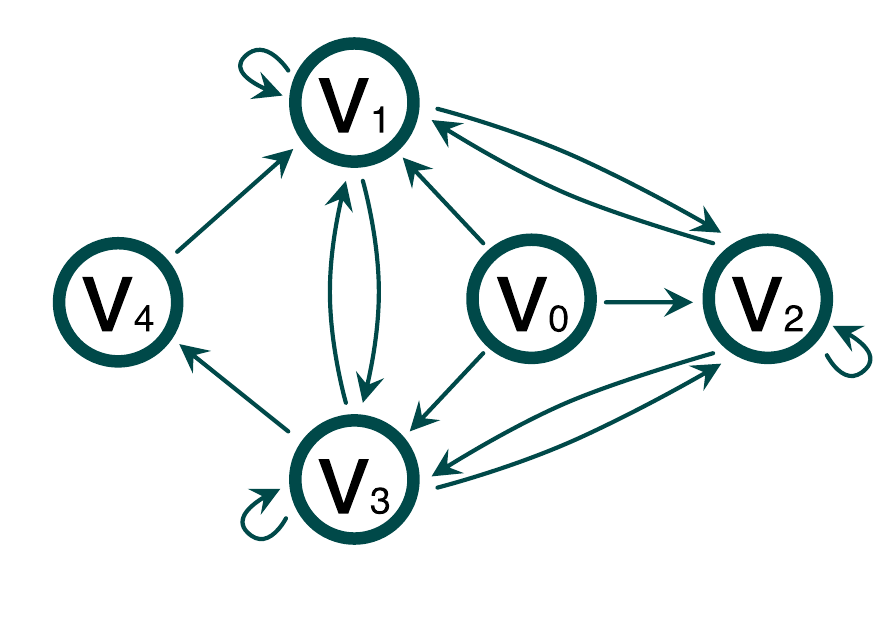}
  
  \includegraphics[width=0.45\textwidth]{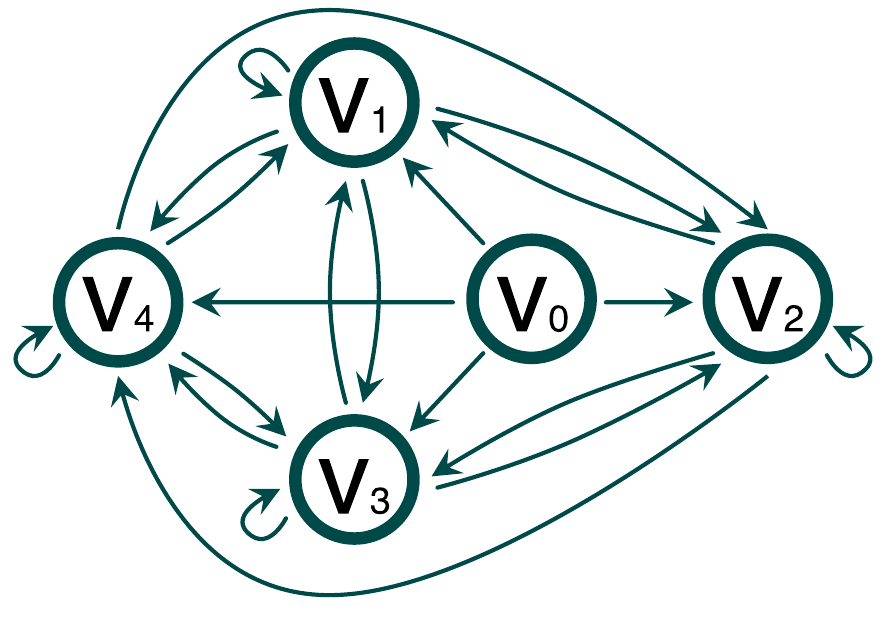}
  
  \caption{Top: An isolated invariant set $S$ consisting of a planar disk 
      with a minimal Morse predecomposition $\cM=\{M_0,M_1,M_2,M_3,M_4\}$ comprising five stationary points  marked in red,
      and Conley predecompositions $\cN:=\{N_0,N_1,N_2,N_3,N_4\}$ 
      and 
      $\cN':=\{N_0,N_1,N_2\cup N'_2,N_3,N_4\cup N'_4\}$.
      Middle left: The flow defined Conley model for $\cM$.
      Middle right: The  Conley model derived from $\cN$.
      Bottom: The  Conley model derived from $\cN'$.
  }
  \label{fig:M0-4}
\end{figure}

\begin{ex}
\label{ex:M0-4}
Consider the flow presented in Figure~\ref{fig:M0-4}(top) and 
the isolated invariant set $S$ consisting of a planar disk. It admits a minimal Morse predecomposition $\cM:=\{M_0,M_1,M_2,M_3,M_4\}$
where each pre-Morse set is a stationary point, marked in red.
    The flow induced Conley model is presented in middle left.
Taking a small disk around each stationary point we obtain a Conley decomposition $\cN:=\{N_0,N_1,N_2,N_3,N_4\}$ approximating $\cM$.
    The associated digraph $G_\cN$ is presented in Figure~\ref{fig:M0-4}(middle right). 
We note that not every Conley decomposition approximating $\cM$ will give the same digraph.
For instance, for $\cN':=\{N_0,N_1,N_2\cup N'_2,N_3,N_4\cup N'_4\}$
we obtain another Conley decomposition approximating $\cM$ with the associated Conley model presented in Figure~\ref{fig:M0-4}(bottom). 
\exend
\end{ex}

\begin{prop}
Let $\cN$ be a Conley predecomposi\-tion of $S$.
Then 
\[
    \cN^\bullet :=\cN^\bullet_\varphi :=\setof{\Inv N\mid N\in\cN,\, \Inv(N, \varphi)\neq\emptyset}
\]
is a Morse predecom\-posi\-tion of $S$ and $\cN$ is its approximation.
\end{prop}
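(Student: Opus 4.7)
The plan is to verify Definition~\ref{defn:predecflow} directly; writing $M_v := \Inv N_v$, the sets $M_v$ with $\Inv N_v \neq \emptyset$ are by construction the elements of $\cN^\bullet$. Each $M_v$ is closed (as a closed subset of the compact set $N_v$), invariant by definition, and the $M_v$ are pairwise disjoint because $M_v \subset N_v$ and the $N_v$ are disjoint. The approximation condition is immediate: for each $M_p \in \cN^\bullet$, by construction $M_p = \Inv N_v$ for some $v \in \VSetV$, so setting $\mu(p) := v$ gives the required map with $\Inv N_{\mu(p)} = M_p$.

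The main content is showing that every full solution $\gamma : \RR \to S$ is a link from some $M_w$ to some $M_v$. I would apply the bi-infinite hypothesis to $\gamma(0)$: the set $E := \{t \in \RR \mid \gamma(t) \in \bigcup_v \inter N_v\}$ is right-infinite, so there exist intervals $[a_n, b_n] \subset E \cap \RR^+$ with $b_n - a_n \to \infty$ (in particular $b_n \to \infty$). The sets $\inter N_v$ are pairwise disjoint and open, so the preimages $\gamma^{-1}(\inter N_v) \cap [a_n, b_n]$ form a disjoint open partition of the connected set $[a_n, b_n]$; hence $\gamma([a_n, b_n]) \subset \inter N_{v(n)}$ for a single index $v(n)$, and by finiteness of $\VSetV$ a subsequence gives constant $v(n) = v$. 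Now take midpoints $c_n := (a_n + b_n)/2$ and half-lengths $s_n := (b_n - a_n)/2 \to \infty$, so $c_n \to \infty$. By compactness of $N_v$ a subsequence $\gamma(c_{n_k}) \to z \in N_v$, and $z \in \omega(\gamma)$. For any fixed $t \in \RR$, $|t| \leq s_{n_k}$ for $k$ large, so $c_{n_k} + t \in [a_{n_k}, b_{n_k}]$ and $\gamma(c_{n_k} + t) \in N_v$; taking $k \to \infty$ gives $\varphi(t, z) \in N_v$. Thus $z \in \Inv N_v = M_v$, giving both $M_v \neq \emptyset$ (so $M_v \in \cN^\bullet$) and $\omega(\gamma) \cap M_v \neq \emptyset$.

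The symmetric argument on the left-infinite side of the bi-infinite hypothesis produces an index $w$ with $\alpha(\gamma) \cap M_w \neq \emptyset$, so $\gamma$ is a link from $M_w$ to $M_v$, completing the verification. The main obstacle is the midpoint step: the bi-infinite condition only tells us the orbit enjoys arbitrarily long stays inside $\bigcup \inter N_v$, while the conclusion requires producing a point whose \emph{entire} bi-infinite orbit stays in $N_v$. Centering on the midpoint of each interval exploits the symmetry so that in the limit both positive and negative times extend unboundedly while the approximating orbit segment still lies in $N_v$, forcing the limit point into $\Inv N_v$.
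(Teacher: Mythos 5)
Your proof is correct and follows essentially the same route as the paper's: both isolate a single $N_v$ receiving arbitrarily long forward-time interval visits (your connectedness-plus-pigeonhole step makes explicit what the paper asserts tersely), take midpoints of those intervals, extract a convergent subsequence, and show the limit point lies in $\Inv N_v\cap\omega(\gamma)$, with the symmetric argument for $\alpha(\gamma)$. No substantive differences to report.
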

\begin{proof}
    Let $\rho:\RR\to S$ be full solution. 
    Since $\rho^{-1}\left(\bigcup\{\inter N\mid N\in\cN \}\right)$ is bi-infinite and $\cN$ is finite, there are $N_-,N_+\in\cN$ such that $\rho^{-1}(N_-)$ is left infinite and $\rho^{-1}(N_+)$ is right infinite.
    We claim that $\alpha(\rho)\cap \Inv N_-\neq\emptyset$ and $\omega(\rho)\cap \Inv N_+\neq\emptyset$.
    Since $\rho^{-1}(N_+)$ is right infinite, we can find a sequence $t_n$ of real numbers such that $\rho([t_n-n,t_n+n])\subset N_+$ and $\lim_{n\to\infty}t_n=\infty$ for $n\in\NN$. 
    Let $x_n:=\rho(t_n)\in N_+$. 
    Without loss of generality we may assume that $\lim x_n=x_*\in N_+$.
    Then, there is a full solution $\gamma:\RR\to N_+$ through $x$. 
    This shows that $x_*\in\Inv N_+\in \cN^\bullet$. 
    Clearly, also $x_*\in\omega(\rho)$.
    Therefore, $\omega(\rho)\cap\Inv N_+\neq\emptyset$.
    Similarly we prove that $\alpha(\rho)\cap\Inv N_-\neq\emptyset$.
    This proves that  $\cN^\bullet$ is a Morse predecomposition and $\cN$ is its approximation.
\qed
\end{proof}

Consider a continuously parametrized family of dynamical systems
\[
    \varphi_\lambda:\RR\times X\rightarrow X,\quad \lambda\in[-1,1]
\]
on a compact metric space $X$.

\begin{thm}(Stability of Conley predecom\-posi\-tion)
    Let $\cN$ be a Conley predecom\-posi\-tion of $X$ with respect to $\varphi_0$.
    Then, there exists an $\varepsilon>0$ such that $\cN$ is an approximation for $\cN^\bullet_{\varphi_{\lambda}}$ for every $\lambda\in(-\varepsilon,\varepsilon)$. 
    In particular, $G_\cN$ is a Conley model of Morse predecomposition $\cN^\bullet_{\varphi_\lambda}$ for $\lambda\in(-\varepsilon,\varepsilon)$.
\end{thm}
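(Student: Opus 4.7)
The plan is to reduce the theorem to showing that $\cN$ is again a Conley predecomposition for $\varphi_\lambda$ whenever $|\lambda|$ is sufficiently small; the two immediately preceding propositions will then deliver that $\cN^\bullet_{\varphi_\lambda}$ is a Morse predecomposition approximated by $\cN$, and that $G_\cN$ is a Conley model of $\cN^\bullet_{\varphi_\lambda}$. Persistence of the isolating-neighborhood property is handled at once by applying Proposition~\ref{prop:perturbation} to each of the finitely many $N_v\in\cN$ and taking $\varepsilon$ to be the minimum of the resulting thresholds; the substance of the proof is to carry the bi-infinite visiting condition from $\varphi_0$ to $\varphi_\lambda$.

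My first step would be to record the following equivalent reformulation of the bi-infinite visiting condition: for any flow $\psi$ on the compact space $X$ and any finite family $\cN$ of pairwise disjoint isolating neighborhoods, $\cN$ is a Conley predecomposition for $\psi$ if and only if for every $x\in X$ both $\alpha_\psi(x)$ and $\omega_\psi(x)$ meet $\bigcup_v\Inv(N_v,\psi)$. The ``if'' direction exploits the fact that a point of $\Inv(N_v,\psi)\subset\inter N_v$ admits an open neighborhood whose $\psi$-orbits remain in $\inter N_v$ on any preassigned bounded time interval; repeated visits to such a neighborhood at times $t_n\to\pm\infty$ produce arbitrarily long visit intervals arbitrarily far in either time direction. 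The ``only if'' direction is essentially the argument used in the paper for the proposition preceding the theorem: the disjointness of the compact sets $\cl(\inter N_v)$, which follows from the disjointness of the $N_v$, together with connectedness of orbit segments, confines each sufficiently long visit interval $\psi([a_n,b_n],x)$ to a single $\inter N_{v_n}$, and a midpoint-subsequence argument places a limit point into some $\Inv(N_{v^*},\psi)\cap\omega_\psi(x)$.

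With this characterization, the stability claim is equivalent to showing that for $|\lambda|$ small, every non-empty closed $\varphi_\lambda$-invariant subset $Y\subset X$ meets $\bigcup_v\Inv(N_v,\varphi_\lambda)$. I would establish this by compactness and contradiction. Assume along $\lambda_n\to 0$ there exist non-empty closed $\varphi_{\lambda_n}$-invariant sets $Y_n$ missing $\bigcup_v\Inv(N_v,\varphi_{\lambda_n})$; choose $y_n\in Y_n$ and, on a subsequence, $y_n\to y^*\in X$. Joint continuity of the parametrized flow yields $\varphi_{\lambda_n}(t,y_n)\to\varphi_0(t,y^*)$ uniformly on compact time intervals, so by the reformulation applied to $\varphi_0$ one obtains $\omega_0(y^*)\cap\Inv(N_{v^*},\varphi_0)\neq\emptyset$ for some $v^*$. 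Centering at times where $\varphi_0(\cdot,y^*)$ is near $\Inv(N_{v^*},\varphi_0)$, one extracts arbitrarily long $\varphi_{\lambda_n}$-orbit segments of $y_n$ lying in $\inter N_{v^*}$. A uniform-escape lemma, proved from upper semicontinuity of $\Inv(N_v,\cdot)$ at $\lambda=0$ combined with compactness of $N_v\setminus W$ for any open neighborhood $W$ of $\Inv(N_v,\varphi_0)$, then forces these orbit segments to concentrate in any preassigned neighborhood of $\Inv(N_{v^*},\varphi_{\lambda_n})$; a diagonal/subsequence argument produces points of $Y_n$ lying in $\Inv(N_{v^*},\varphi_{\lambda_n})$, contradicting the hypothesis on $Y_n$.

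The main obstacle is precisely this last compactness-plus-semicontinuity step, which must turn ``orbit segments of $y_n$ close to $\Inv(N_v,\varphi_0)$ throughout arbitrarily long times'' into ``points of the $\varphi_{\lambda_n}$-invariant set $Y_n$ actually in $\Inv(N_v,\varphi_{\lambda_n})$''. The delicacy is in coordinating the isolation threshold of each $N_v$ under perturbation, the upper semicontinuity of the maximal invariant set operator, and the joint continuity of the parametrized flow so that all thresholds remain uniform across the finitely many $N_v$. Once this technical core is secured, the openness of $U$, compactness of $X$, and invariance of the $Y_n$ combine by standard arguments to complete the proof.
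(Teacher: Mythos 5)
Your first paragraph already contains the entirety of the paper's own proof: the paper simply applies Proposition~\ref{prop:perturbation} to each of the finitely many $N_v\in\cN$ and takes the minimum of the resulting thresholds, so that every $N_v$ remains an isolating neighborhood for $\varphi_\lambda$ with $|\lambda|<\varepsilon$; nothing further is argued there. Everything you do after that addresses a genuinely stronger claim, namely that the bi-infinite visiting condition persists --- equivalently, that $\cN$ is again a Conley predecomposition for $\varphi_\lambda$, so that $\cN^\bullet_{\varphi_\lambda}$ is a Morse predecomposition. You are right that this is where the substance would have to lie under that reading of the statement, but your argument for it does not close, and the step you yourself flag as ``the main obstacle'' is not merely technical.

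The failure point is exactly the transition from ``arbitrarily long $\varphi_{\lambda_n}$-orbit segments of $y_n$ inside $\inter N_{v^*}$'' to ``points of $Y_n$ in $\Inv(N_{v^*},\varphi_{\lambda_n})$''. The uniform-escape lemma you invoke forces long orbit segments to concentrate near $\Inv(N_{v^*},\varphi_0)$, not near $\Inv(N_{v^*},\varphi_{\lambda_n})$; upper semicontinuity of $\lambda\mapsto\Inv(N_v,\varphi_\lambda)$ gives no lower bound on the perturbed invariant set, which may be empty even though $\Inv(N_{v^*},\varphi_0)$ is not. A saddle-node bifurcation on the circle, $\dot\theta=\lambda+(1-\cos\theta)$ with $N$ a small closed arc around $\theta=0$, makes this concrete: $\{N\}$ is a Conley predecomposition for $\lambda=0$, yet for every $\lambda>0$ one has $\Inv(N,\varphi_\lambda)=\emptyset$ and each orbit spends only a bounded stretch of time in $\inter N$ per revolution, so the bi-infinite condition fails for all $\lambda>0$ and $\cN^\bullet_{\varphi_\lambda}$ is the empty family. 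Consequently the stronger claim you are trying to prove is false in general, and no amount of tightening the compactness-plus-semicontinuity argument will repair it. If you aim only at what the paper actually establishes --- persistence of each $N_v$ as an isolating neighborhood, whence the (essentially tautological) approximation property of $\cN$ for $\cN^\bullet_{\varphi_\lambda}$ --- your opening paragraph suffices and the remainder should be dropped.
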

\begin{proof}
    Since $\cN$ consists of a finite number of isolating neighborhoods, the conclusion follows immediately
from Proposition~\ref{prop:perturbation}.
\qed
\end{proof}

\subsection{Asymptotic characterization of Morse predecompositions}
In this section we study Morse predecompositions whose pre-Morse sets are isolated invariant sets.
In such a case we provide theorems which characterize Morse predecomposition in terms of the asymptotic behavior
of trajectories in isolating neighborhoods surrounding the isolated invariant sets. 

\begin{prop}
\label{prop:inifinite-capacity}
A subset $A\subset\RR$ is right infinite if and only if
\begin{equation}
\label{eq:inifinite-capacity-plus}
\forall K>0\;\exists a\geq K\;\; [a,a+K]\subset A.
\end{equation}
A subset $A\subset\RR$ is left infinite if and only if
\begin{equation*}
\forall K>0\;\exists a\leq -K\;\; [a-K,a]\subset A.
\end{equation*}
\end{prop}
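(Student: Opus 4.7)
The plan is to prove both equivalences directly, with the left-infinite case mirroring the right-infinite case verbatim (either by symmetry via $t \mapsto -t$, or by an identical argument), so I will concentrate on the right-infinite equivalence.

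For the forward direction, I assume $A$ is right infinite, fix $K > 0$, and aim to produce some $a \geq K$ with $[a, a+K] \subset A$. By the definition of right infinite, the supremum of lengths $b - a$ over intervals $[a,b] \subset A$ with $a, b \in \RR^+$ is unbounded, so I can pick $a', b' \in \RR^+$ with $[a', b'] \subset A$ and $b' - a' \geq 2K$. If $a' \geq K$, then $[a', a' + K] \subset [a', b'] \subset A$ and I take $a := a'$. Otherwise $a' < K$, but then $b' \geq a' + 2K > 2K$, so $[K, 2K] \subset [a', b'] \subset A$ and I take $a := K$. Either way the required $a$ is produced.

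For the reverse direction, I assume the universal-existential statement and fix an arbitrary $M > 0$; I want to show the supremum defining right-infiniteness exceeds $M$. Apply the hypothesis with $K := M$ to obtain $a \geq M$ with $[a, a + M] \subset A$. Since both $a$ and $a + M$ lie in $\RR^+$ and their difference is exactly $M$, the set $\setof{b - a \mid a,b \in \RR^+,\;[a,b] \subset A}$ contains $M$. As $M$ was arbitrary, the supremum is $\infty$, so $A$ is right infinite.

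The proof for left-infinite $A$ follows by a verbatim symmetric argument (or by applying the right-infinite case to $-A := \setof{-x \mid x \in A}$, noting that left/right infiniteness correspond under $x \mapsto -x$). There is essentially no obstacle: the only delicate point is ensuring the interval produced in the forward direction has left endpoint at least $K$ rather than merely nonnegative, which is resolved by allowing the preliminary interval to have length $2K$ rather than just $K$ and shifting to its rightmost half when necessary.
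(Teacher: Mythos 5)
Your proof is correct and follows essentially the same route as the paper: the paper also extracts an interval $[c,d]\subset A$ of length at least $2K$ and then takes $a:=\max(K,c)$, which is exactly your case distinction packaged into one formula. The converse direction, which you spell out, the paper simply declares obvious.
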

\proof
    Obviously condition \eqref{eq:inifinite-capacity-plus} implies that $A$ is right infinite.
    To see the opposite implication assume $A\subset\RR$ is right infinite.
    Fix a $K>0$.
    We can find $c,d\in\RR^+$ such that $[c,d]\subset A$ and $d-c\geq 2K$.
    Let $a:=\max(K,c)$. 
    Then $K\leq a\leq K+c$ and $a+K\leq c+2K\leq d$. 
    Therefore, $[a,a+K]\subset[c,d]\subset A$, which proves \eqref{eq:inifinite-capacity-plus}.
    The proof of the remaining assertion is analogous. 
\qed

Let $N$ be an isolating neighborhood and let $\gamma$ be a full solution. 
We say that $\gamma$ that {\em wades through  $N$ in plus (minus) infinity} if there exists sequences $t^-,t^+:\NN\to\RR^+$
such that $t^-$, $t^+$ as well as $t^+-t^-$ tend to plus (minus) infinity and for all $n\in\NN$
\begin{equation}
\label{eq:wade}
 \gamma([t^-_n,t^+_n])\subset N.
\end{equation}
If $\gamma$ wades through  $N$ in plus (minus) infinity we write $\gamma\xrightarrow{+} N$ ($\gamma\xrightarrow{-} N$).

\begin{prop}
\label{prop:wade}
Assume $N$ is an isolating neighborhood and $\gamma$ is a full solution. 
Then the following conditions are mutually equivalent. 
\begin{itemize}
   \item[(i)] The preimage $\gamma^{-1}(N)$ is right (respectively left) infinite.
   \item[(ii)] Solution $\gamma$ wades through $N$ in plus (respectively minus) infinity.
   \item[(iii)] The intersection of $\Inv N$ and $\omega(\gamma)$ (respectively $\alpha(\gamma)$) is non-empty.
\end{itemize}
\end{prop}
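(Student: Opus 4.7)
The plan is to establish the equivalences through the cyclic chain (i)$\Rightarrow$(ii)$\Rightarrow$(iii)$\Rightarrow$(i), treating only the ``plus infinity / right infinite'' case; the minus / left version is completely analogous upon reversing signs.

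The equivalence (i)$\Leftrightarrow$(ii) is essentially a reformulation of Proposition~\ref{prop:inifinite-capacity}. Indeed, the wading condition asks that $\gamma^{-1}(N)$ contain intervals $[t^-_n,t^+_n]$ with $t^-_n\to\infty$ and $t^+_n-t^-_n\to\infty$, which is equivalent to saying that $\gamma^{-1}(N)$ contains arbitrarily long intervals arbitrarily far out, i.e.\ satisfies \eqref{eq:inifinite-capacity-plus}.

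For (ii)$\Rightarrow$(iii), assume $\gamma\xrightarrow{+}N$ with witnesses $t^-_n\le t^+_n$ and set $s_n:=(t^-_n+t^+_n)/2$, $x_n:=\gamma(s_n)\in N$. Since $s_n\to\infty$ and $N$ is compact, we may pass to a subsequence with $x_n\to x_\ast\in N$; then $x_\ast\in\omega(\gamma)$ follows directly from the definition of the omega-limit set. For any fixed $T>0$ we have $s_n-t^-_n=t^+_n-s_n=(t^+_n-t^-_n)/2\to\infty$, so $[s_n-T,s_n+T]\subset[t^-_n,t^+_n]$ for all large $n$; thus $\varphi([-T,T],x_n)\subset N$, and continuity of $\varphi$ together with closedness of $N$ yields $\varphi([-T,T],x_\ast)\subset N$. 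Letting $T\to\infty$ gives $x_\ast\in\Inv N$.

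For (iii)$\Rightarrow$(i), pick $x_\ast\in\Inv N\cap\omega(\gamma)$. Because $N$ isolates $\Inv N$, we have $\varphi(\RR,x_\ast)\subset\Inv N\subset\inter N$, so for every $T>0$ the compact arc $\varphi([-T,T],x_\ast)$ lies in $\inter N$. A standard compactness-and-continuity argument by contradiction (if no such neighborhood existed, one would extract $y_n\to x_\ast$ and $t_n\to t_\ast\in[-T,T]$ with $\varphi(t_n,y_n)\in X\setminus N$, contradicting $\varphi(t_\ast,x_\ast)\in\inter N$ via joint continuity of $\varphi$) produces a neighborhood $W_T$ of $x_\ast$ with $\varphi([-T,T],W_T)\subset N$. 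Choosing $T_n\to\infty$ and using $x_\ast\in\omega(\gamma)$ to pick times $s_n>T_n+n$ with $\gamma(s_n)\in W_{T_n}$, we obtain $[s_n-T_n,s_n+T_n]\subset\gamma^{-1}(N)$, confirming the right-infiniteness condition \eqref{eq:inifinite-capacity-plus}.

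The main obstacle is this last implication: converting the bare existence of an invariant point $x_\ast\in N$ visited by $\gamma$ infinitely often into a uniform time-estimate that keeps $\gamma$ itself inside $N$ across arbitrarily long windows. The argument hinges crucially on the defining property of an isolating neighborhood, $\Inv N\subset\inter N$, which supplies the geometric slack that makes the neighborhoods $W_T$ available for every $T$.
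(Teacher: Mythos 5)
Your proof is correct and follows essentially the same route as the paper: (i)$\Leftrightarrow$(ii) via Proposition~\ref{prop:inifinite-capacity}, (ii)$\Rightarrow$(iii) via the midpoint--subsequence argument, and the remaining direction via neighborhoods of a point of $\Inv N$ whose $[-T,T]$-orbit stays in $N$. The only difference is that where the paper obtains those neighborhoods by citing \cite[Theorem 2.1]{Churchill1971}, you prove their existence directly by a compactness-and-continuity argument from $\Inv N\subset\inter N$ (and you close the cycle at (i) rather than (ii)), which is a harmless variation.
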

\proof 
    We give the proofs that (i) is equivalent to (ii) in the case when $\gamma^{-1}(N)$ is right infinite. 
    The proof in the other case is analogous. 
    Assume (i) and set
    \[
        t^+(0):=t^-(0):=\inf\setof{s\geq 0\mid\gamma(s)\in N}.
    \]
    Proceeding recursively, assume $t^+_{n-1}$ and  $t^-_{n-1}$ are already defined for some $n\in\NN$.
    Using Proposition~\ref{prop:inifinite-capacity} for $K:=\max(n,t^+_{n-1})$
    select an $a\geq K$ such that $[a,a+K]\subset \gamma^{-1}(N)$ and set $t^-_n:=a$, $t^+_n:=a+K$.
    Then $t^+_n > t^-_n\geq n$, $t^+_n - t^-_n\geq n$ and $[t^-_n,t^+_n]\subset \gamma^{-1}(N)$.
    In particular, sequences $t^-$, $t^+$, $t^+-t^-$ tend to infinity
    Hence, $\gamma$ wades through $N$ in plus  infinity, which proves (ii).
    Obviously, (ii) implies (i). Hence the proof of the equivalence of (i) and (ii) is completed. 
    
    We now give the proofs that (ii) is equivalent to (iii) in the case when $\gamma$ wades through $N$ in plus infinity.
    The proof in the other case is analogous. 
    Assume (ii). Then we can choose sequences $t^-,t^+:\NN\to\RR^+$
    such that $t^-$, $t^+$ as well as $t^+-t^-$ tend to plus  infinity and \eqref{eq:wade} is satisfied for all $n\in\NN$.
    Set $t_n:=\frac{t^-_n+t^+_n}{2}$, $x_n:=\gamma(t_n)$ and $\gamma_n(t):=\gamma(t_n+t)$
    By taking a subsequence, if necessary, we may assume that
    $\lim_{n\to\infty}x_n=x_*$ for some $x_*\in \omega(\gamma)$. 
    Since $\gamma_n([t^-_n-t_n,t^+_n-t_n])=\gamma([t^-_n,t^+_n])\subset N$, we get $x_*\in\Inv N$.
    Hence, $\Inv N\cap\omega(\gamma)\neq\emptyset$, which proves (iii).
    
    Finally assume (iii). 
    As a consequence of \cite[Theorem 2.1]{Churchill1971},
    for each $n\in\NN$ we can choose $U_n$, an open neighborhood of $M$ such that 
    \begin{equation}
    \label{eq:predecomposition-iso-ne-1}
       u\in U_n\implies \varphi(u,[-n,n])\subset N.
    \end{equation}
    Since $\omega(\gamma)\cap M\neq\emptyset$, we can choose a $T_n > 2n$ such $\varphi(x,T_n)\in U_n$.
    Set $t^-_n:=T_n-n$ and $t^+_n:=T_n+n$. Then, $t^+_n>t^-_n>n$ and $t^+_n - t^-_n=2n$ proving
    that $t^+$, $t^-$ and $t^+ - t^-$ tend to infinity. Moreover, since $u_n:=\varphi(x,T_n)\in U_n$, we get from 
    \eqref{eq:predecomposition-iso-ne-1} that 
    \[
       \varphi(x,[t^-_n,t^+_n])=\varphi(\varphi(x,T_n),[-n,n])=\varphi(u_n,[-n,n])\subset N,
    \]
    which proves (ii).
\qed

\begin{thm}
\label{thm:predecomposition-iso-ne}
Assume $\cM=\setof{M_p\mid p\in\VSet}$ is a finite indexed family of mutually disjoint isolated invariant sets in $S$.
Then, the following conditions are equivalent.
\begin{itemize}
   \item[(i)] Family $\cM$ is a Morse predecomposition of $S$.
   \item[(ii)] There exists a family $\cN=\setof{N_p\mid p\in\VSet}$
of mutually disjoint isolating neighborhoods in $S$ such that $M_p=\Inv N_p$ and 
for every full solution $\gamma$ in $S$ there exist $p,q\in\VSet$ such that $\gamma\xrightarrow{-}N_p$ and $\gamma\xrightarrow{+}N_q$.
\end{itemize}
\end{thm}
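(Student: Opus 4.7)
The plan is to derive both implications directly from Proposition~\ref{prop:wade}, which equates $\gamma \xrightarrow{\pm} N$ with the intersection of $\Inv N$ with the corresponding limit set of $\gamma$ being non-empty. The only preparatory work is to produce, in the direction (i)$\Rightarrow$(ii), a family of mutually disjoint isolating neighborhoods, one for each $M_p$.

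For (i)$\Rightarrow$(ii), I would argue as follows. Since each $M_p$ is an isolated invariant set, there exists an isolating neighborhood $\tilde N_p$ with $\Inv\tilde N_p=M_p$. The sets $M_p$ are compact (being closed subsets of the compact $S$), finite in number and mutually disjoint, so I can choose pairwise disjoint open sets $U_p\supset M_p$. Setting $N_p:=\tilde N_p\cap\cl U_p$ yields compact, pairwise disjoint sets with $M_p\subset\inter N_p$ (because $U_p\cap\inter\tilde N_p$ is an open set containing $M_p$ and contained in $N_p$) and $\Inv N_p\subset\Inv\tilde N_p=M_p\subset N_p$, so $N_p$ is an isolating neighborhood with $\Inv N_p=M_p$. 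Now given any full solution $\gamma$ in $S$, condition (i) supplies $p,q\in\VSet$ with $\alpha(\gamma)\cap M_p\neq\emptyset$ and $\omega(\gamma)\cap M_q\neq\emptyset$; since $M_p=\Inv N_p$ and $M_q=\Inv N_q$, Proposition~\ref{prop:wade} delivers $\gamma\xrightarrow{-}N_p$ and $\gamma\xrightarrow{+}N_q$.

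For (ii)$\Rightarrow$(i), I would simply reverse this last step. Each $M_p=\Inv N_p$ is a closed invariant subset of $S$, and the family is mutually disjoint by hypothesis. Given a full solution $\gamma$ in $S$, (ii) yields $p,q\in\VSet$ with $\gamma\xrightarrow{-}N_p$ and $\gamma\xrightarrow{+}N_q$, and Proposition~\ref{prop:wade} then gives $\alpha(\gamma)\cap M_p=\alpha(\gamma)\cap\Inv N_p\neq\emptyset$ and $\omega(\gamma)\cap M_q=\omega(\gamma)\cap\Inv N_q\neq\emptyset$, i.e.\ $\gamma$ is a link from $M_p$ to $M_q$. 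Hence $\cM$ satisfies Definition~\ref{defn:predecflow}.

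The only subtlety, and hence the main (minor) obstacle, lies in the shrinking construction in (i)$\Rightarrow$(ii): one must ensure that after intersecting $\tilde N_p$ with $\cl U_p$ the resulting set still has $M_p$ in its interior and still satisfies $\Inv N_p=M_p$. Both checks are immediate once one recalls that the $M_p$ are compact and mutually disjoint in the compact space $S$; everything else is a direct quotation of Proposition~\ref{prop:wade}.
\qed
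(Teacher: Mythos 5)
Your proof is correct and follows essentially the same route as the paper's: both implications are read off from Proposition~\ref{prop:wade}, and the paper simply asserts (using finiteness and mutual disjointness of the $M_p$) the existence of the mutually disjoint isolating neighborhoods that you construct explicitly. One small repair to your construction: pairwise disjoint open sets $U_p$ need not have pairwise disjoint closures, so before setting $N_p:=\tilde N_p\cap\cl U_p$ you should choose the $U_p$ with pairwise disjoint closures (possible because the $M_p$ are finitely many disjoint compact sets in a metric space, hence at positive mutual distance); with that adjustment everything goes through.
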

\proof
Consider a finite family $\cM=\setof{M_p\mid p\in\VSet}$ of mutually disjoint isolated invariant sets in $S$
and assume (i). Since $\cM$ is finite and its elements are mutually disjoint, 
we can choose a family  $\cN=\setof{N_p\mid p\in\VSet}$  
of mutually disjoint isolating neighborhoods such that $M_p=\Inv N_p$ for every $p\in\VSet$.
Let $\gamma$ be a full solution in $S$.
Then, by the definition of Morse predecomposition, there exist $p,q\in\VSet$ such that $\gamma$ is a link from $M_p$ to $M_q$.
It follows from the definition of link and Proposition~\ref{prop:wade} that $\gamma\xrightarrow{-}N_p$ and $\gamma\xrightarrow{+}N_q$.
This proves (ii).
Assume in turn that there exists a family $\cN=\setof{N_p\mid p\in\VSet}$ which meets the requirements of (ii).
To prove that $\cM$ is a Morse predecomposition of $S$ we only need to verify that every full solution in $S$ is a link between elements of $\cM$,
because, by the main assumption, family $\cM$ consists of  mutually disjoint isolated invariant sets in $S$.
Hence, let $\gamma:\RR\to S$ be a full solution. Then, by (ii) there exist $p,q\in\VSet$ such that $\gamma\xrightarrow{-}N_p$ and $\gamma\xrightarrow{+}N_q$
and it follows from Proposition~\ref{prop:wade} that $\gamma$ is a link from $M_p$ to $M_q$.
\qed

As an immediate consequence of Theorem~\ref{thm:predecomposition-iso-ne} and Proposition~\ref{prop:wade} we get the following corollary.

\begin{cor}
\label{cor:predecomposition-iso-ne}
Assume $\cN=\setof{N_p\mid p\in\VSet}$ is a family
of mutually disjoint isolating neighborhoods in $S$ such that $\Inv N_p\neq\emptyset$ and 
for every full solution $\gamma$ in $S$ there exist $p,q\in\VSet$ such that $\gamma\xrightarrow{-}N_p$ and $\gamma\xrightarrow{+}N_q$.
Then $\cN$ is a Conley predecomposition of $S$. Moreover, there is an edge from $p$ to $q$ in the flow defined Conley model of $\cN^\bullet$
if and only if there is a full solution $\gamma$ in $S$ which wades through $N_p$ in minus infinity and through $N_q$ in plus infinity. 
\qed
\end{cor}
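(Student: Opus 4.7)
The plan is to assemble the corollary from two earlier ingredients: Theorem~\ref{thm:predecomposition-iso-ne}, which converts the wading hypothesis into a Morse predecomposition statement, and Proposition~\ref{prop:wade}, which freely interconverts wading, bi-infiniteness of $\gamma^{-1}(N)$, and non-emptiness of $\Inv N$ intersected with the appropriate limit set.

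First I would check that $\cN$ is a Conley predecomposition. Its members are mutually disjoint isolating neighborhoods by hypothesis, so only the bi-infiniteness of $\gamma^{-1}\bigl(\bigcup_{N\in\cN}\inter N\bigr)$ needs verification, for every full solution $\gamma$ in $S$. Setting $M_p:=\Inv N_p$, the wading hypothesis gives $\gamma\xrightarrow{-}N_p$ and $\gamma\xrightarrow{+}N_q$ for some $p,q\in\VSet$, and Proposition~\ref{prop:wade} converts this into $\alpha(\gamma)\cap M_p\neq\emptyset$ and $\omega(\gamma)\cap M_q\neq\emptyset$. To pass from wading through $N_p$ to wading through $\inter N_p$, I would shrink: pick a smaller compact isolating neighborhood $N'_p$ with $M_p\subset\inter N'_p\subset N'_p\subset\inter N_p$, which forces $\Inv N'_p=M_p$ by sandwiching. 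A second application of Proposition~\ref{prop:wade}, now to $N'_p$, shows that $\gamma^{-1}(N'_p)\subset\gamma^{-1}(\inter N_p)$ is left infinite; the symmetric argument handles $N_q$ on the forward side, delivering the required bi-infiniteness.

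Next, because $\Inv N_p\neq\emptyset$ for every $p\in\VSet$, the family $\cN^\bullet$ coincides with $\{M_p\mid p\in\VSet\}$, so Theorem~\ref{thm:predecomposition-iso-ne} applied to this family with $\cN$ as the witnessing collection of isolating neighborhoods yields at once that $\cN^\bullet$ is a Morse predecomposition of $S$ and $\cN$ is its approximation. For the edge characterization, recall the flow-induced Conley model has an edge $p\to q$ precisely when there exists a non-trivial link from $M_p$ to $M_q$. Proposition~\ref{prop:wade} shows that the existence of a link from $M_p$ to $M_q$ is exactly the existence of a full solution wading through $N_p$ in minus infinity and through $N_q$ in plus infinity, and when $p\neq q$ the disjointness $M_p\cap M_q=\emptyset$ automatically renders any such link non-trivial, which gives the stated equivalence.

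The only step that is not purely formal is the wading-to-interior-wading upgrade in the Conley check; I expect this to be the main obstacle, but the nested isolating neighborhood trick above dispatches it cleanly, which is presumably why the authors label the result as an immediate consequence of the two preceding statements.
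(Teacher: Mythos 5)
The paper offers no written proof---it declares the corollary an immediate consequence of Theorem~\ref{thm:predecomposition-iso-ne} and Proposition~\ref{prop:wade}---and your assembly of those two ingredients is exactly the intended route. Your handling of the first assertion is in fact more careful than the paper's ``immediate'': the definition of Conley predecomposition requires bi-infiniteness of the time set spent in $\bigcup\{\inter N\mid N\in\cN\}$, whereas the hypothesis and Proposition~\ref{prop:wade} only give wading through the \emph{closed} sets $N_p$, and your nested-neighborhood upgrade ($M_p\subset\inter N'_p\subset N'_p\subset\inter N_p$ compact, whence $\Inv N'_p=M_p$ and $\gamma^{-1}(N'_p)\subset\gamma^{-1}(\inter N_p)$ is left/right infinite) closes that genuine, if small, hole correctly. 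Likewise the identification $\cN^\bullet=\{M_p\mid p\in\VSet\}$ (using $\Inv N_p\neq\emptyset$) and the appeal to the theorem for the Morse predecomposition claim are fine.

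The one gap is in the edge characterization, and you half-acknowledge it yourself: you establish the equivalence only for $p\neq q$. By the paper's definition, $G_{\cN^\bullet}$ has an edge from $p$ to $q$ precisely when there is a \emph{non-trivial} link from $M_p$ to $M_q$, while Proposition~\ref{prop:wade} converts a wading solution only into a link, trivial or not. For $p=q$ this matters: since $M_p=\Inv N_p\neq\emptyset$ is invariant, any full solution inside $M_p$ wades through $N_p$ in both directions, so the right-hand side of the stated equivalence is automatically true for every $p$, whereas a loop at $p$ in the flow-defined Conley model requires a homoclinic link with $\im\gamma\not\subset M_p$ (e.g., for $S$ a single hyperbolic equilibrium there is a wading solution but no loop). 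So the ``if'' direction fails in the homoclinic case as you have argued it; either the corollary should be read as restricted to $p\neq q$ (or with ``non-trivial'' inserted before ``full solution''), or you need an additional argument producing a non-trivial homoclinic link from a wading one, which is not available in general. You should state explicitly that your proof covers only the heteroclinic edges and flag the loop case as an imprecision in the statement rather than silently passing over it.
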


\section{Concluding remarks}\label{sec:concluding_remarks}

The proposed concept of Morse predecomposition is a natural generalization of Morse decomposition.
It provides insight into the internal structure of Morse sets. 
It is defined  in the classical setting of flows as well as in the combinatorial setting
of multivector fields. Therefore, it may be a useful tool in the algorithmic analysis of recurrent dynamics. 

Several other questions remain to be answered. 
A particularly interesting problem concerns the counterpart of Theorem~\ref{thm:mvf_consolidation} for flows, 
that is, whether every Morse predecomposition naturally induces a Morse decomposition via consolidation of cliques in the flow-defined Conley model.
The problem is open.

Another important issue which needs to be addressed is the algorithm for the construction of Morse predecompositions in the combinatorial setting.
Theorem~4.1 in~\cite{DJKKLM2019} shows that 
the algorithm for minimal Morse decompositions of combinatorial multivector fields reduces to the algorithm for strongly connected components in a directed graph. 
In the case of predecomposition one would search for possibly small strongly connected subsets of strongly connected components.
This may be achieved by computing the simple cycles in the graph.
However, unlike the decomposition case where every strongly connected component is automatically an isolated invariant set
(see~\cite[Theorem 4.1]{DJKKLM2019}), in the predecomposition case one has to take the smallest locally closed, 
$\cV$-compatible superset of the strongly connected set to guarantee isolation.  
Moreover, contrary to Morse decomposition, there may be many different minimal Morse predecompositions in the combinatorial setting (see Example \ref{ex:mvf-2}).
Therefore, to fully understand the structure of an isolated invariant set algorithm should be able to retrieve all minimal predecompositions. 

Finally, it would be interesting to investigate the relation between  Morse predecomposition in the combinatorial case and
Morse predecomposition in the classical case  in the spirit of the result for Morse decomposition \cite{MW2021}.

\bibliographystyle{abbrvnat}
\bibliography{bibliography}

\begin{thebibliography}{34}
\providecommand{\natexlab}[1]{#1}
\providecommand{\url}[1]{\texttt{#1}}
\expandafter\ifx\csname urlstyle\endcsname\relax
  \providecommand{\doi}[1]{doi: #1}\else
  \providecommand{\doi}{doi: \begingroup \urlstyle{rm}\Url}\fi

\bibitem[Akin(1993)]{Akin1993}
E.~Akin.
\newblock \emph{The General Topology of Dynamical Systems}, volume~1.
\newblock American Mathematical Society, Providence, RI, 1993.

\bibitem[Alexandrov(1937)]{Al1937}
P.~Alexandrov.
\newblock Diskrete {R}{\"a}ume.
\newblock \emph{Mathematiceskii Sbornik (N.S.)}, 2:\penalty0 501--518, 1937.

\bibitem[Arai et~al.(2009)Arai, Kalies, Kokubu, Mischaikow, Oka, and
  Pilarczyk]{AraiEtAl2009}
Z.~Arai, W.~Kalies, H.~Kokubu, K.~Mischaikow, H.~Oka, and P.~Pilarczyk.
\newblock A database schema for the analysis of global dynamics of
  multiparameter systems.
\newblock \emph{SIAM Journal on Applied Dynamical Systems}, 8\penalty0
  (3):\penalty0 757--789, 2009.
\newblock ISSN 1536-0040.
\newblock \doi{10.1137/080734935}.

\bibitem[Bush et~al.(2012)Bush, Gameiro, Harker, Kokubu, Mischaikow, Obayashi,
  and Pilarczyk]{BushAtAl2012}
J.~Bush, M.~Gameiro, S.~Harker, H.~Kokubu, K.~Mischaikow, I.~Obayashi, and
  P.~Pilarczyk.
\newblock Combinatorial-topological framework for the analysis of global
  dynamics.
\newblock \emph{CHAOS}, 22\penalty0 (4), 2012.

\bibitem[Bush et~al.(2016)Bush, Cowan, Harker, and
  Mischaikow]{bush:cowan:harker:mischaikow}
J.~Bush, W.~Cowan, S.~Harker, and K.~Mischaikow.
\newblock Conley-{M}orse databases for the angular dynamics of {N}ewton's
  method on the plane.
\newblock \emph{SIAM J. Appl. Dyn. Syst.}, 15\penalty0 (2):\penalty0 736--766,
  2016.
\newblock ISSN 1536-0040.
\newblock \doi{10.1137/15M1017971}.
\newblock URL \url{https://doi.org/10.1137/15M1017971}.

\bibitem[Churchill(1972)]{Churchill1971}
R.~Churchill.
\newblock Isolated invariant sets in compact metric spaces.
\newblock \emph{Journal of Differential Equations}, 12\penalty0 (2):\penalty0
  330--352, Sep 1972.

\bibitem[Conley(1978)]{Conley1978}
C.~Conley.
\newblock \emph{Isolated Invariant Sets and the {M}orse Index}, volume~38.
\newblock CBMS Regional Conference Series in Mathematics, 1978.

\bibitem[Conley(1988)]{Conley1988}
C.~Conley.
\newblock The gradient structure of a flow: {I}.
\newblock \emph{Ergod. Th. \& Dynam. Sys.}, {$8^*$}:\penalty0 11--26, 1988.

\bibitem[Day and Frongillo(2019)]{day:frongillo}
S.~Day and R.~Frongillo.
\newblock Sofic shifts via {C}onley index theory: computing lower bounds on
  recurrent dynamics for maps.
\newblock \emph{SIAM J. Appl. Dyn. Syst.}, 18\penalty0 (3):\penalty0
  1610--1642, 2019.
\newblock ISSN 1536-0040.
\newblock \doi{10.1137/18M1192007}.
\newblock URL \url{https://doi.org/10.1137/18M1192007}.

\bibitem[Day et~al.(2004)Day, Junge, and Mischaikow]{day:junge:mischaikow}
S.~Day, O.~Junge, and K.~Mischaikow.
\newblock A rigorous numerical method for the global analysis of
  infinite-dimensional discrete dynamical systems.
\newblock \emph{SIAM J. Appl. Dyn. Syst.}, 3\penalty0 (2):\penalty0 117--160,
  2004.
\newblock ISSN 1536-0040.
\newblock \doi{10.1137/030600210}.
\newblock URL \url{https://doi.org/10.1137/030600210}.

\bibitem[Dey et~al.(2019)Dey, Juda, Kapela, Kubica, Lipi\'nski, and
  Mrozek]{DJKKLM2019}
T.~Dey, M.~Juda, T.~Kapela, J.~Kubica, M.~Lipi\'nski, and M.~Mrozek.
\newblock Persistent homology of {M}orse decompositions in combinatorial
  dynamics.
\newblock \emph{SIAM Journal on Applied Dynamical Systems}, 18:\penalty0
  510--530, 2019.

\bibitem[Dey et~al.(2023)Dey, Lipi\'nski, Mrozek, and Slechta]{DeLiMrSl2023}
T.~K. Dey, M.~Lipi\'nski, M.~Mrozek, and R.~Slechta.
\newblock Computing connection matrices via persistence-like reductions.
\newblock arXiv:2303.02549v2, 2023.

\bibitem[Engelking(1989)]{En1989}
R.~Engelking.
\newblock \emph{General Topology}.
\newblock Heldermann Verlag, Berlin, 1989.

\bibitem[Forman(1998{\natexlab{a}})]{Fo98a}
R.~Forman.
\newblock Morse theory for cell complexes.
\newblock \emph{Adv. Math.}, 134\penalty0 (1):\penalty0 90--145,
  1998{\natexlab{a}}.
\newblock ISSN 0001-8708.
\newblock URL \url{https://doi.org/10.1006/aima.1997.1650}.

\bibitem[Forman(1998{\natexlab{b}})]{Fo98b}
R.~Forman.
\newblock Combinatorial vector fields and dynamical systems.
\newblock \emph{Math. Z.}, 228\penalty0 (4):\penalty0 629--681,
  1998{\natexlab{b}}.
\newblock ISSN 0025-5874.
\newblock URL \url{https://doi.org/10.1007/PL00004638}.

\bibitem[Franzosa(1984)]{Fr1984}
R.~Franzosa.
\newblock Index filtrations and connection matrices for partially ordered
  {M}orse decompositions.
\newblock PhD Thesis, University of Wisconsin-Madison, 1984.

\bibitem[Franzosa(1986)]{Fr1986}
R.~Franzosa.
\newblock Index filtrations and the homology index braid for partially ordered
  {M}orse decompositions.
\newblock \emph{Trans. Amer. Math. Soc.}, 298\penalty0 (1):\penalty0 193--213,
  1986.
\newblock ISSN 0002-9947.
\newblock \doi{10.2307/2000615}.
\newblock URL \url{https://doi.org/10.2307/2000615}.

\bibitem[Kaczynski et~al.(2016)Kaczynski, Mrozek, and Wanner]{KMW2016}
T.~Kaczynski, M.~Mrozek, and T.~Wanner.
\newblock Towards a formal tie between combinatorial and classical vector field
  dynamics.
\newblock \emph{J. Comput. Dyn.}, 3\penalty0 (1):\penalty0 17--50, 2016.
\newblock ISSN 2158-2491,2158-2505.
\newblock \doi{10.3934/jcd.2016002}.
\newblock URL \url{https://doi.org/10.3934/jcd.2016002}.

\bibitem[Kalies et~al.(2022)Kalies, Mischaikow, and Vandervorst]{KMV2022}
W.~D. Kalies, K.~Mischaikow, and R.~C. A.~M. Vandervorst.
\newblock Lattice structures for attractors {III}.
\newblock \emph{J. Dynam. Differential Equations}, 34\penalty0 (3):\penalty0
  1729--1768, 2022.
\newblock ISSN 1040-7294,1572-9222.
\newblock \doi{10.1007/s10884-021-10056-8}.
\newblock URL \url{https://doi.org/10.1007/s10884-021-10056-8}.

\bibitem[Lipi\'nski et~al.(2022)Lipi\'nski, Kubica, Mrozek, and
  Wanner]{LKMW2020}
M.~Lipi\'nski, J.~Kubica, M.~Mrozek, and T.~Wanner.
\newblock Conley-{M}orse-{F}orman theory for generalized combinatorial
  multivector fields on finite topological spaces.
\newblock \emph{J. Appl. Comput. Topol.}, 2022.
\newblock \doi{10.1007/s41468-022-00102-9}.

\bibitem[Liz and Pilarczyk(2012)]{liz:pilarczyk}
E.~Liz and P.~Pilarczyk.
\newblock Global dynamics in a stage-structured discrete-time population model
  with harvesting.
\newblock \emph{J. Theoret. Biol.}, 297:\penalty0 148--165, 2012.
\newblock ISSN 0022-5193,1095-8541.
\newblock \doi{10.1016/j.jtbi.2011.12.012}.
\newblock URL \url{https://doi.org/10.1016/j.jtbi.2011.12.012}.

\bibitem[Mischaikow and Mrozek(1995)]{mischaikow:mrozek:95}
K.~Mischaikow and M.~Mrozek.
\newblock Isolating neighborhoods and chaos.
\newblock \emph{Japan J. Indust. Appl. Math.}, 12\penalty0 (2):\penalty0
  205--236, 1995.
\newblock ISSN 0916-7005,1868-937X.
\newblock \doi{10.1007/BF03167289}.
\newblock URL \url{https://doi.org/10.1007/BF03167289}.

\bibitem[Mischaikow and Mrozek(2002)]{MischMroz2002}
K.~Mischaikow and M.~Mrozek.
\newblock Conley index theory.
\newblock In \emph{Handbook of Dynamical Systems {III}}, pages 393--460. North
  Holland, 2002.

\bibitem[Mrozek(2017)]{Mr2017}
M.~Mrozek.
\newblock Conley-{M}orse-{F}orman theory for combinatorial multivector fields
  on {L}efschetz complexes.
\newblock \emph{Found. Comput. Math.}, 17\penalty0 (6):\penalty0 1585--1633,
  2017.
\newblock ISSN 1615-3375.
\newblock \doi{10.1007/s10208-016-9330-z}.
\newblock URL \url{https://doi.org/10.1007/s10208-016-9330-z}.

\bibitem[Mrozek and Wanner(2021)]{MW2021}
M.~Mrozek and T.~Wanner.
\newblock Creating semiflows on simplicial complexes from combinatorial vector
  fields.
\newblock \emph{J. Diff. Equ.}, 304:\penalty0 375--434, 2021.

\bibitem[Mrozek and Wanner(2023)]{MrWa2023}
M.~Mrozek and T.~Wanner.
\newblock Connection matrices in combinatorial topological dynamics.
\newblock arXiv:2103.04269v2, 2023.

\bibitem[Mrozek et~al.(2022)Mrozek, Srzednicki, Thorpe, and Wanner]{MWS2022}
M.~Mrozek, R.~Srzednicki, J.~Thorpe, and T.~Wanner.
\newblock Combinatorial vs. classical dynamics: Recurrence.
\newblock \emph{Communications in Nonlinear Science and Numerical Simulation},
  108:\penalty0 106226(1--30), 2022.

\bibitem[Munkres(2000)]{munkres:00a}
J.~R. Munkres.
\newblock \emph{Topology}.
\newblock Prentice Hall, Inc., Upper Saddle River, NJ, 2000.
\newblock ISBN 0-13-181629-2.

\bibitem[Pilarczyk et~al.(2023)Pilarczyk, Signerska-Rynkowska, and
  Graff]{PilSigGra2023}
P.~Pilarczyk, J.~Signerska-Rynkowska, and G.~Graff.
\newblock {Topological-numerical analysis of a two-dimensional discrete neuron
  model}.
\newblock \emph{Chaos: An Interdisciplinary Journal of Nonlinear Science},
  33\penalty0 (4):\penalty0 043110, 04 2023.

\bibitem[Reineck(1990)]{reineck:90a}
J.~F. Reineck.
\newblock The connection matrix in {M}orse-{S}male flows.
\newblock \emph{Trans. Amer. Math. Soc.}, 322\penalty0 (2):\penalty0 523--545,
  1990.
\newblock ISSN 0002-9947.
\newblock \doi{10.2307/2001713}.
\newblock URL \url{https://doi.org/10.2307/2001713}.

\bibitem[Silva(2008)]{Silva2008}
C.~E. Silva.
\newblock \emph{Invitation to Ergodic Theory}, volume~42.
\newblock AMS Student Mathematical Library, 2008.

\bibitem[Szymczak(1996)]{szymczak}
A.~Szymczak.
\newblock The {C}onley index and symbolic dynamics.
\newblock \emph{Topology}, 35\penalty0 (2):\penalty0 287--299, 1996.
\newblock ISSN 0040-9383.
\newblock \doi{10.1016/0040-9383(95)00029-1}.
\newblock URL \url{https://doi.org/10.1016/0040-9383(95)00029-1}.

\bibitem[Williams(1979)]{Wi1979}
R.~F. Williams.
\newblock The structure of {L}orenz attractors.
\newblock \emph{Inst. Hautes \'{E}tudes Sci. Publ. Math.}, \penalty0
  (50):\penalty0 73--99, 1979.
\newblock ISSN 0073-8301,1618-1913.
\newblock URL \url{http://www.numdam.org/item?id=PMIHES_1979__50__73_0}.

\bibitem[Woukeng et~al.(2023)Woukeng, Sadowski, Leśkiewicz, Lipi\'nski, and
  Kapela]{WSLLK2023}
D.~Woukeng, D.~Sadowski, J.~Leśkiewicz, M.~Lipi\'nski, and T.~Kapela.
\newblock Rigorous computation in dynamics based on topological methods for
  multivector fields.
\newblock \emph{J Appl. and Comput. Topology}, 2023.
\newblock \doi{0.1007/s41468-023-00149-2}.
\newblock URL \url{https://doi.org/10.1007/s41468-023-00149-2}.

\end{thebibliography}

\end{document}